\newtheorem{theorem}{\bf Theorem}[section]
\newtheorem{proposition}[theorem]{\bf Proposition}
\newtheorem{remark}{\bf Remark}
\newtheorem{lemma}[theorem]{\bf Lemma}
\newcommand{\beq}{\begin{equation}}
\newcommand{\eeq}{\end{equation}}
\newcommand{\ben}{\begin{eqnarray}}
\newcommand{\een}{\end{eqnarray}}
\newcommand{\beno}{\begin{eqnarray*}}
\newcommand{\eeno}{\end{eqnarray*}}
\numberwithin{equation}{section}
\subjclass[2010]{ 35A09, 35E05, 35G55, 35M11}
\keywords{Green's function, enhanced dissipation, Couette flow, suppression of blow up, fractional diffusion}
\title[Suppression of blow-up in 3-D  fractional Keller-Segel]{Suppression of blow-up in 3-D Keller-Segel system with fractional diffusion via Couette flow in whole space}
\author[S.Deng]{Shijin Deng}
\address{School of Mathematical Sciences, CMA-Shanghai, Shanghai Jiao Tong University, Shanghai 200240, P.R.China.}
\email{matdengs@sjtu.edu.cn}
\author[B.Shi]{Binbin Shi}
\address{School of Mathematics, Southwestern University of Finance and Economics,
Chengdu, 611130, P.R.China.}
\address{School of Mathematics and Statistics, Nanjing University of Science and Technology, Nanjing, 210094, P.R. China.}
\email{shibb@swufe.edu.cn}
\author[W.Wang]{Weike Wang}
\address{School of Mathematical Sciences, CMA-Shanghai and Institute of Natural Science, Shanghai Jiao Tong University, Shanghai, 200240, P.R.China.}
\email{wkwang@sjtu.edu.cn}
\author[Y.Wang]{Yucheng Wang}
\address{Department of Mathematics, Shanghai University, Shanghai 200444, China.}
\email{ycwangmath@shu.edu.cn}
\begin{document}

\begin{abstract}
In this paper, we consider a Keller-Segel model with a fractional diffusion term in $\mathbb{R}^3$ in the background of a Couette flow. We show that when the background Couette flow is large enough, the dissipation enhancement induced could prevent the blow-up of solutions and thus prove the global existence and also obtain time decay rates of the solution in $L^p$ norm. The main tool of the proof is a corresponding Green's function and the key estimate is its $L^1$ estimate without singularities at $t=0$. To fulfill such an estimate, we meet great troubles caused by the fractional heat kernel together with the Couette flow in the model considered here and overcome the troubles by introducing a space-frequency mixed decomposition.
\end{abstract}
\maketitle

\vspace{-1.2em}

\section{Introduction}

In this paper, we consider the following generalized Keller-Segel model in $\mathbb{R}^3$ in the background of a large Couette flow
\begin{equation}\label{eq:1.1}
\begin{cases}
\partial_tn+Ay\partial_xn+(-\Delta)^{\alpha/2} n+\nabla\cdot\big(n \mathbf{B}(n)\big)=0, \\
n(x,y,z,t)\big|_{t=0}=n_0(x,y,z),\ \ \ (x,y,z,t)\in\mathbb{R}^3\times\mathbb{R}^{+}.
\end{cases}
\end{equation}
Here, the non-negative unknown function ${n(x,y,z,t)}$ represents the density and $A$ is a positive constant. The fractional Laplacian  $(-\Delta)^{\alpha/2}$ is defined via the Fourier transform
\begin{equation}\label{eq:1.2}
(-\Delta)^{\alpha/2} n(x, y, z)=\int_{\mathbb{R}^3}(\xi^2+\eta^2+\zeta^2)^{\alpha/2}\widehat{n}(\xi,\eta,\zeta)e^{{i(\xi x+\eta y+\zeta z)}}d\xi d\eta d\zeta,\ \ \ 0<\alpha\leq2,
\end{equation}
where $\widehat{n}(\xi, \eta, \zeta)$ denotes the Fourier transform of $n(x, y, z)$. The linear vector operator $\mathbf{B}(n)$ is called the attractive kernel, which could be formally represented as
\begin{equation}\label{eq:1.3}
\mathbf{B}(n)=\nabla\big((-\Delta)^{-1}n\big).
\end{equation}
In this paper, we study the global well-posedness of equation \eqref{eq:1.1} with $A\gg 1$ in $\mathbb{R}^3$.

\vskip .05in

Without the advection $\big(i.e.\ A=0\ in\ \eqref{eq:1.1}\big)$, the equation \eqref{eq:1.1} becomes a generalized Keller-Segel model
\begin{equation}\label{eq:1.4}
\partial_tn+(-\Delta)^{\alpha/2} n+\nabla\cdot\big(n \mathbf{B}(n)\big)=0,
\end{equation}
which could be used to explain various biological and physical phenomena. When {{$\alpha=2$}}, the equation \eqref{eq:1.4} goes back to the classical parabolic-elliptic Keller-Segel model. It is well-known that the solutions in high dimensions may blow up in finite time if the initial function $n_0$ is large in $L^1$ norm.  More precisely, for two dimensional case, if $L^1$ norm of initial data {{$n_0$}} is less than $8\pi$, there exists a unique global solution; otherwise, the solution may blow up in finite time. One could refer to \cite{Nagai.1995} for more details. In three and higher dimensional cases, the solution may blow up even for an initial function $n_0$ arbitrarily small in $L^1$ norm (see \cite{SW.2019}). When $0<\alpha<2$, the solution of \eqref{eq:1.4} also may blow up in finite time for high dimensional cases under a large initial function $n_0$ (see \cite{BK.2010,LS.2019,LR.2009}).

\vskip .05in

The case $A\neq0$ is corresponding to a chemotactic process progressing in the background of a shear flow. A realistic scenario is that chemotactic processes take place in a non-stationary fluid, and the possible effects and related problems resulting from the interactions between the chemotactic process and the fluid transport have been widely investigated (see \cite{CLY.2015,DLM.2010,KLW.2022,LZ.2022,WW.2019}).  The study of the Keller-Segel model with the presence of an incompressible flow is one of those attempts:
\begin{equation}\label{eq:1.5}
\partial_tn+A{\bf u}\cdot\nabla n+(-\Delta)^{\alpha/2} n+\nabla\cdot\big(n \mathbf{B}(n)\big)=0,
\end{equation}
where ${\bf u}$ is a divergence-free vector field. An interesting question arising is whether the mixing effect coming from the fluid transport can suppress the possible finite time blow-up. Recently, some progresses have been made for the suppression of blow-up by incompressible flows. When $\alpha=2$, the equation \eqref{eq:1.5} is the classical Keller-Segel model with an incompressible flow. Kiselev-Xu \cite{KX.2016} and Hopf-Rodrigo \cite{HR.2018} considered the case that ${\bf u}$ is a relaxation enhancing flow (the definition of such a flow is given in \cite{CKRZ.2008}), and proved that the solution of the advective Keller-Segel equation does not blow up in finite time provided the amplitude of the relaxation enhancing flow ${\bf u}$ is large enough. Bedrossian-He \cite{BH.2017} found that shear flows \big(${\bf u}=(u(y),0)$\big) provide a different suppression effect in the sense that sufficiently large shear flows could prevent the blow-up in two dimensions but could not guarantee the global existence in three dimensions if the initial mass is greater than $8\pi$. When  $0<\alpha<2$, the equation \eqref{eq:1.5} is a generalized Keller-Segel model with an incompressible flow. Hopf-Rodrigo \cite{HR.2018} and Shi-Wang \cite{SW.2020} proved that the solution of \eqref{eq:1.5} does not blow up in finite time for a large relaxation enhancing flow ${\bf u}$, where the range of $\alpha$ needs to be restricted. Niu-Shi-Wang \cite{NSW.2025} proved that the blow-up of the solution of \eqref{eq:1.5} can be suppressed by a large shear flow ${\bf u}$ for $3/2<\alpha<2$. The suppression of the blow-up phenomenon through fluid motion for other related models could be found in \cite{CW.2024,DT.2024,FSW.2022,He.2018,He.2023,LXX.2025,SW.2023,ZZZ.2021}.

\vskip .05in

However, in most of the previous work, the $x$-variable is required to be in a periodic domain and this requirement ensures that the corresponding frequencies of $\Delta_x$ are discrete. Thus for the non-zero modes, there is a spectrum gap which helps us to capture the dissipation enhancement; and for the zero mode, there is no enhanced dissipation effect. If this requirement is removed and $x\in \mathbb{R}$ is considered, the corresponding continuous spectra make it hard to separate zero and non-zero modes as before. New methods and also new ideas are needed to capture the enhanced dissipation effect in this new setting. Recently, Coti Zelati-Gallay \cite{CZG.2023} considered the heat equation with a shear flow and $(x,y)\in\mathbb{R}\times [0,L]$ and proved that there is an enhanced dissipation effect and Taylor dispersion for high and low frequency parts respectively. For the whole space case, Deng-Shi-Wang \cite{DSW.2023} considered the classical Keller-Segel model with a Couette flow in $\mathbb{R}^3$ and proved that the blow-up in 3-D Keller-Segel model can be suppressed by a sufficiently large Couette flow; meanwhile, Arbon-Bedrossian \cite{AB.2024} and Li-Liu-Zhao \cite{LLZ.2025} studied the stability of 2-D Navier-Stokes equation near a Couette flow in $\mathbb{R}^2$ and gave the stability threshold.

\vskip .05in

In this paper, we consider the generalized Keller-Segel model in the background of a Couette flow with spatial variables $(x,y,z)\in\mathbb{R}^3$. We obtain the global existence and decaying rates of the solution by the Green's function method to overcome the difficulty caused by $x\in\mathbb{R}$. It is interesting that we prove even in 3-D the blow-up of the solution for the generalized Keller-Segel model could be suppressed by a Couette flow with a large enough amplitude in the whole space case. It reveals a totally different mechanism compared with the periodic case studied in \cite{BH.2017,He.2018,HR.2018,NSW.2025}. The suppression effect of a shear flow for the case $x\in\mathbb{R}$ is more like the one of a relaxation enhancing flow and ensures the global existence of solutions in any spatial dimension. This statement comes from a fact that our method used in this paper could be applied for any spatial dimension and thus similar results could be obtained. In fact, compare the Green's function with a Couette flow used in this paper and a classical fractional heat kernel and one could find that the differences lie only in $x$-variable and $y$-variable (and thus the construction and estimates of the Green's function with a Couette flow could be similar for any spatial dimension no less than 2). Thus, the method and estimates used here could also be applied for a general $\mathbb{R}^d (d\ge 2)$ case and one could prove a similar result that in the whole space case a sufficiently large Couette shear flow always leads to a strong enough dissipation enhancement which ensures the global existence of solutions.

\vskip .05in

The main result of this paper is as follows:

\begin{theorem}\label{the:1.1}
Let $\alpha\in(1, 2]$ and $p\in[2,\infty)$, for any non-negative initial function satisfying
\begin{equation}\label{2000}
n_{0}(x,y,z)\in W^{3, p} (\mathbb{R}^3)\cap L^{1}(\mathbb{R}^3),
\end{equation}
there exists a positive constant $A_{0}=A_0(\alpha, n_{0})$, such that for any $A\geq A_{0}$ the system \eqref{eq:1.1} has a unique classical solution satisfying
$$
n(x,y,z,t)\in C\big({{W^{3, p}}}(\mathbb{R}^3)\cap L^1(\mathbb{R}^3) ; \mathbb{R}^{+}\big)
$$
and
$$
\|D^{\vartheta}n(\cdot, \cdot, \cdot, t)\|_{L^p(\mathbb{R}^3)}\leq C(1+t)^{-\big(\frac{3}{\alpha}+1\big)\big(1-\frac{1}{p}\big)-\frac{|\vartheta|}{\alpha}},\ \  |\vartheta|\leq 3.
$$
\end{theorem}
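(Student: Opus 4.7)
I would work entirely at the level of a Green's function representation plus Duhamel iteration, closing the nonlinear problem via a bootstrap in a time-weighted norm that encodes the decay rates claimed in Theorem \ref{the:1.1}. The $L^1$ norm of $n$ is automatically conserved because the equation is in divergence form and $\int \mathbf{B}(n)\cdot\nabla n\,dx = -\tfrac12\int n^2 dx$ is well-defined; this pins the mass and is used repeatedly.

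\textbf{Step 1 --- Green's function.} First I would solve $\partial_t G+Ay\partial_x G+(-\Delta)^{\alpha/2}G=0$, $G|_{t=0}=\delta$. Taking the Fourier transform in $(x,y,z)$ with dual variables $(\xi,\eta,\zeta)$ converts $Ay\partial_x$ into the transport operator $-A\xi\partial_\eta$, and integrating along characteristics yields the explicit formula
\begin{equation*}
\hat{G}(\xi,\eta,\zeta,t)=\exp\!\Big(-\int_0^t \big(\xi^2+(\eta+A\xi\tau)^2+\zeta^2\big)^{\alpha/2}d\tau\Big).
\end{equation*}
The Couette shift $\eta\mapsto\eta+A\xi\tau$ is the source of dissipation enhancement: on modes with $\xi\neq 0$ the integrand sweeps through large values even when $(\eta,\zeta)$ is small, converting the bare fractional-heat decay $(1+t)^{-3/\alpha(1-1/p)}$ into the Taylor-dispersion-type decay $(1+t)^{-(3/\alpha+1)(1-1/p)}$ that appears in the theorem.

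\textbf{Step 2 --- Kernel estimates without a $t=0$ singularity.} The core technical task is to bound $\|D^{\vartheta}G(\cdot,t)\|_{L^1_{x,y,z}}$ and, more generally, the operator norms $f\mapsto D^\vartheta G(\cdot,t)\ast f$ that will appear in Duhamel. A naive bound on $\nabla G$ inherits a $t^{-1/\alpha}$ blow-up as $t\to 0^+$, which fails to be time-integrable for $\alpha\leq 1$ and also interacts badly with the Riesz factor in $\mathbf{B}$. Following the space-frequency mixed decomposition advertised in the abstract, I would split the Fourier representation of $G$ according to the relative sizes of $|\xi|$, $|\eta|$, $|\zeta|$ and $A|\xi|t$: in high-frequency regimes the rapid decay of the exponential factor gives the bound directly, while in low-frequency regimes one passes to physical space and exploits the cancellation obtained when the derivative falls on the $\tau$-integrated phase. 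The target is an $L^1$ bound of the form $\|D^{\vartheta}G(\cdot,t)\|_{L^1}\lesssim (1+t)^{-|\vartheta|/\alpha}(1+At)^{-\gamma}$ for some $\gamma>0$, which is free of $t=0$ singularities once paired with the Riesz transform structure of $\mathbf{B}(n)=\nabla(-\Delta)^{-1}n$.

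\textbf{Step 3 --- Duhamel and bootstrap.} From the representation
\begin{equation*}
n(t)=G(t)\ast n_0-\int_0^t G(t-s)\ast\nabla\!\cdot\!\big(n\mathbf{B}(n)\big)(s)\,ds,
\end{equation*}
I would introduce the time-weighted norm
\begin{equation*}
\||n\||_T:=\sup_{0\le t\le T}\sum_{|\vartheta|\le 3}(1+t)^{(3/\alpha+1)(1-1/p)+|\vartheta|/\alpha}\|D^{\vartheta}n(t)\|_{L^p}+\sup_{0\le t\le T}\|n(t)\|_{L^1}.
\end{equation*}
Using the kernel estimates from Step 2, the $L^p$-boundedness of Riesz transforms ($1<p<\infty$), and standard product/Leibniz estimates in $W^{3,p}$, I would derive an inequality of the type $\||n\||_T\le C\|n_0\|_{W^{3,p}\cap L^1}+CA^{-\gamma}\||n\||_T^2$. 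Choosing $A\ge A_0(\alpha,n_0)$ large enough, a standard continuity argument closes the bootstrap on every interval $[0,T]$ and, together with local well-posedness in $W^{3,p}\cap L^1$, gives global existence and the stated decay rates.

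\textbf{Main obstacle.} The heart of the argument is Step 2. For the classical case $\alpha=2$ the Green's function is explicitly Gaussian after a shear change of variables, but for general $\alpha\in(1,2]$ the exponent $\int_0^t(\xi^2+(\eta+A\xi\tau)^2+\zeta^2)^{\alpha/2}d\tau$ is non-quadratic and does not factorize, so neither pure Fourier-side estimates nor pure physical-side estimates suffice. Producing the $L^1$ kernel bounds that are both uniform (no $t=0$ singularity when combined with the Riesz transform) and retain an $A$-gain is what the space-frequency mixed decomposition is built for, and it is also the reason the restriction $\alpha>1$ appears: the tail of the fractional kernel is too heavy when $\alpha\le 1$ to absorb the unit-mass Riesz contribution from $\mathbf{B}(n)$.
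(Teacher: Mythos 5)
Your overall architecture --- explicit Fourier formula for the Green's function, an $L^1$ kernel estimate obtained by a mixed space-frequency decomposition, and a Duhamel bootstrap that closes by taking $A$ large --- is the same as the paper's. But two concrete points in your sketch would fail as written. First, the target bound in Step 2, $\|D^{\vartheta}G(\cdot,t)\|_{L^1}\lesssim (1+t)^{-|\vartheta|/\alpha}(1+At)^{-\gamma}$, is impossible for $|\vartheta|\ge 1$: since $G(\cdot,t)\to\delta$ as $t\to 0^{+}$, the norm $\|\nabla G(\cdot,t)\|_{L^1}$ must blow up. What the paper actually proves (Lemma \ref{lem:3.6}) is $\|\partial_x^{k_1}\partial_y^{k_2}\partial_z^{k_3}\mathbb{G}_2(\cdot,t)\|_{L^1}\le Ct^{-k/\alpha}(1+At)^{-k_1}$ for $k=1,2$; the singularity $t^{-1/\alpha}$ is not removed, it is merely integrable near $s=t$ because $\alpha>1$, and the genuinely singularity-free statement is the bound on the linear propagation of the data (Lemma \ref{lem:4.51}), obtained directly from $\|\widehat{n}_0\|_{L^1}$ rather than from any kernel estimate.

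Second, your bootstrap norm carries all derivatives up to order $3$ at once. To estimate $\|D^{\vartheta}n\|_{L^p}$ with $|\vartheta|=3$ from Duhamel you must distribute $|\vartheta|+1=4$ derivatives between the kernel and the nonlinearity; placing $k\ge\alpha$ derivatives on $\mathbb{G}(t-s)$ produces a non-integrable factor $(t-s)^{-k/\alpha}$ near $s=t$, while placing them all on $n\mathbf{B}(n)$ requires exactly the top-order norm you are trying to control, with no smallness left to gain. The paper resolves this by bootstrapping only $\|n\|_{L^p}$ (which suffices by the blow-up criterion of Theorem \ref{thm:4.1}) and then recovering derivatives a posteriori by an induction in fractional increments $\Lambda^{k\gamma}$ with $0<\gamma<\alpha-1$ (Lemmas \ref{lem:4.5}--\ref{lem:4.6}), so that each Duhamel step spends only $1+\gamma<\alpha$ derivatives on the kernel and upgrades the regularity by $\gamma$ at a time. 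Your single inequality $\||n\||_T\le C\|n_0\|+CA^{-\gamma}\||n\||_T^2$ skips exactly this step; without the incremental regularity argument (or an equivalent device) the top-order part of the estimate does not close.
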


\vskip .05in

The main tool used to prove Theorem \ref{the:1.1} is the Green's function of \eqref{eq:1.1} which is given by \eqref{eq:3.1}.
The construction of such a Green's function is highly nontrivial due to the variable coefficient $Ay$ in the linear equation in \eqref{eq:3.1}. In the previous study of Green's functions for time-evolution equations, the variable coefficients in linear level always cause great troubles and a generalized method is to approximate such Green's functions by the ones for linear equations with constant coefficients. For more details, one could refer to \cite{LWY.2009,NWY.2000} for approximated Green's functions of diffusion waves and \cite{Liu.1997,Liu.2009,Liu.2015,Yu.2010} for Green's functions of transversal fields in the study of shock profiles. However, it does not apply to the cases in which variable coefficients lead to new phenomena. The problem in this paper is just one of such cases, since one of the main ingredients of the dissipation enhancement is the variable coefficient $Ay$ (and it is also the Couette flow). For the very special case $\alpha=2$ (i.e. the classical Keller-Segel model with a Couette flow), there is an exact formula for the Green's function (one could refer to \cite{DSW.2023,MP.1977}); while for a generalized Keller-Segel model with a Couette flow and $0<\alpha<2$, there is no exact solution for the Green's function.

\vskip .05in

It seems difficult to construct the Green's function for the problem considered in this paper. However, we still believe that it is a natural choice and the reason is as follows. Except for some special cases, the global existence problems for most equations are independent of decaying rates of the solutions. In this paper, the suppression of blow-up is supposed to come from the enhanced dissipation and this expection shows a tight connection between those two aspects. Therefore a very important step in this paper is to establish the dissipation enhancement by obtaining the decaying rates of the solutions and Green's function should be one of the most efficient tools for whole space problem and to describe algebraic decaying structures.

\vskip .05in

The Green's function method for dissipation enhancement problems is at its very beginning. Recently, in \cite{DSW.2023} the pointwise structure of the solution for the classical Keller-Segel model with a Couette flow was obtained and thus both the enhanced dissipation and the wave structure induced by the classical heat kernel and a Couette flow are clear. It also reveals the main difficulty to attain dissipation enhancement and the key estimate required to overcome such a trouble in the technical level. It should be an estimate of the corresponding Green's function in a suitable norm containing no singularity at $t=0$ and such a key estimate is always the $L^1$ estimate. In this paper, it is fortunate that although there is no exact solution for the Green's function $\mathbb{G}$ defined by \eqref{eq:3.1}, there is an exact formula (one could refer to Lemma \ref{lem:3.1}) for its Fourier transform $\widehat{\mathbb{G}}$. The $L^p (2\leq p\leq\infty)$ estimates of the Green's function could be easily obtained from the $L^1$ and $L^2$ estimates of $\widehat{\mathbb{G}}$ and Young's inequality; however, it is nontrivial to get the $L^1$ estimate of the Green's function from its Fourier transform. Liu-Wang \cite{LW.1998} introduced an approach for pointwise estimates of functions from their Fourier transform based on micro-local analysis and pointwise structure can definitely yield the $L^1$ estimate. However, it does not work for the problem here since this approach is designed for low-frequency parts and could not reveal the difference between $t$ and $1+t$. Thus a direct use of this approach will lead to an $L^1$ estimate still containing singularities at $t=0$ if $\alpha$ in \eqref{eq:1.1} is not an even positive integer.

\vskip .05in

Roughly speaking, the symbol $\left|\vec\Xi\right|^\alpha$ (where one denotes the spectral variable $\Xi=(\xi, \eta, \zeta)\in\mathbb{R}^3$ as the dual variable of the space variable $(x, y, z)\in\mathbb{R}^3$ and $\alpha>0$) is not analytic unless $\alpha$ is an even positive integer, and the only single point is the original point (belonging to the low frequency region). However, an expected singularity at $t=0$ (similar to that for heat kernel) arises from the high frequency parts. Therefore, for the classical fractional heat kernel, the elimination of such singularities in $L^1$ norm could be fulfilled by combining Liu-Wang's approach and the low frequency-high frequency decomposition. In this paper, a fractional heat kernel together with the advection induced by a Couette flow is considered and information from different frequency regions could be mixed up. It causes new troubles compared with the classical fractional heat kernel. To overcome this trouble, we introduce a new decomposition where both spatial regions and frequency regions are involved and more details could be found in Lemma \ref{lem:3.6}.

\vskip .05in

The rest of this paper is arranged as follows. In Section \ref{sec.2}, we prepare analytic tools which will be used in the estimates of the Green's function and the proof of nonlinear stability. In Section \ref{sec.3}, the estimates of Green's function are obtained and the main trouble but also really essential part lies in its $L^1$ estimate. In Section \ref{sec.4}, we obtain a regularity criterion of the solution first; later based on the regularity criterion and estimates of the Green's function gained in Section \ref{sec.3}, we prove Theorem \ref{the:1.1} and establish the global well-posedness of 3-D generalized Keller-Segel equation in the background of a large Couette flow.

\section{Preliminaries }\label{sec.2}
Denote the spectral variable $\Xi=(\xi, \eta, \zeta)\in\mathbb{R}^3$ as the dual variable of the space variable $(x, y, z)\in\mathbb{R}^3$. In this section, we prepare some technical lemmas which will be used later. The first one is about the Young's inequality of the kernel function.

\begin{lemma}\label{lem:2.1}{(\cite{CDS.1993, SEM.1993})}
Let $z, z'\in\mathbb{R}^d$, if the kernel function $K(z, z')\in\mathbb{R}^{d}\times\mathbb{R}^d$ is a measurable function satisfying
$$
\|K(\cdot, z')\|_{L^q}\leq A,\ \|K(z, \cdot)\|_{L^q}\leq B.
$$
We denote the integral operator
$$
T f(z)=\int_{\mathbb{R}^d}K(z, z')f(z')\, dz',
$$
where $f(z)\in L^p(\mathbb{R}^d)$, then we have
$$
\|Tf\|_{L^r}\leq C\|f\|_{L^p}.
$$
Here $C=\max\{A, B\}$ and $1+\frac{1}{r}=\frac{1}{p}+\frac{1}{q},\ 1\leq p, q, r\leq\infty$. Furthermore, we have
$$
\|Tf\|_{L^r}\leq A^{\frac{q}{r}}B^{q-\frac{q}{p}}\|f\|_{L^p}.
$$
\end{lemma}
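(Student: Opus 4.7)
This is the classical generalized Young's inequality for an integral kernel (a Schur-type bound), so my plan is to run the standard three-factor H\"older argument while carefully tracking both constants $A$ and $B$ separately. First I would reduce to the sharper second estimate $\|Tf\|_{L^r}\le A^{q/r}B^{q-q/p}\|f\|_{L^p}$: the scaling hypothesis $\tfrac1p+\tfrac1q=1+\tfrac1r$ rearranges to $\tfrac{q}{r}+\bigl(q-\tfrac{q}{p}\bigr)=1$, so $A^{q/r}B^{q-q/p}\le\max(A,B)=C$, and the weaker bound drops out for free.

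The core of the proof is the pointwise factorization
\[
|K(z,z')\,f(z')| \;=\; \bigl(|K(z,z')|^{q}|f(z')|^{p}\bigr)^{1/r}\cdot|K(z,z')|^{\,1-q/r}\cdot|f(z')|^{\,1-p/r},
\]
to which I would apply H\"older in $z'$ with the three exponents $\bigl(r,\,\tfrac{qr}{r-q},\,\tfrac{pr}{r-p}\bigr)$, whose reciprocals sum to $\tfrac1p+\tfrac1q-\tfrac1r=1$ by hypothesis, and whose denominators are positive because $r\ge\max(p,q)$ (also a consequence of the scaling relation since $p,q\ge1$). The second and third factors then contribute $\|K(z,\cdot)\|_{L^q}^{\,1-q/r}\|f\|_{L^p}^{\,1-p/r}\le B^{\,1-q/r}\|f\|_{L^p}^{\,1-p/r}$, while the first factor leaves the mixed quantity $\bigl(\int|K(z,z')|^q|f(z')|^p\,dz'\bigr)^{1/r}$. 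Raising the resulting pointwise bound to the $r$-th power, integrating in $z$, and applying Fubini together with the other kernel bound $\|K(\cdot,z')\|_{L^q}\le A$ produces
\[
\int|Tf(z)|^r\,dz \;\le\; A^{q}B^{\,r-q}\|f\|_{L^p}^{\,r};
\]
taking the $r$-th root and using $1-q/r = q-q/p$ once more yields the sharper estimate.

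I do not expect a serious obstacle, since the argument is essentially routine. The only mild care is needed in the degenerate cases where the three-factor H\"older exponents become infinite, namely $r=q$ (forcing $p=1$) and $r=p$ (forcing $q=1$), which I would handle separately: for $p=1$ Minkowski's integral inequality gives $\|Tf\|_{L^q}\le A\|f\|_{L^1}$, and for the dual endpoint $p=q'$ Cauchy--Schwarz--type H\"older gives $|Tf(z)|\le B\|f\|_{L^{q'}}$; these two endpoints could alternatively be fed into Riesz--Thorin interpolation to recover the sharper bound. The genuine bookkeeping point is to keep $A$ and $B$ separated throughout, rather than collapsing them into $C$ too early, and to verify all the exponent arithmetic through the single scaling identity $\tfrac1p+\tfrac1q=1+\tfrac1r$.
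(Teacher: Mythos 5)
Your proposal is correct: the exponent arithmetic all checks out (in particular $q/r+(q-q/p)=1$ and $1-q/r=q-q/p$ follow from $\tfrac1p+\tfrac1q=1+\tfrac1r$), the three-factor H\"older splitting with exponents $(r,\tfrac{qr}{r-q},\tfrac{pr}{r-p})$ is the standard proof of this generalized Young/Schur inequality, and the Fubini step correctly brings in the other kernel bound $A$. The paper does not prove this lemma at all --- it is quoted from the cited references of Sogge and Stein --- so your argument simply supplies the classical proof that the paper omits.
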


The following lemma is about the quasi-differential operators.

\begin{lemma}\label{lem:2.3}{{(\cite{GO.2014})}}
Assume $1<p<\infty,\ s>0$, then we have
$$
\|\Lambda^s(fg)(\cdot)-f(\cdot)\Lambda^s g(\cdot)\|_{L^p}\leq C(\|\Lambda f(\cdot)\|_{L^{p_{1}}}\|\Lambda^{s-1}g(\cdot)\|_{L^{p_{2}}}+\|\Lambda^s f(\cdot)\|_{L^{q_{1}}}\|g(\cdot)\|_{L^{q_{2}}}),
$$
and
$$
\|\Lambda^{s}(fg)(\cdot)\|_{L^p}\leq C(\|\Lambda^s f(\cdot)\|_{L^{p_{1}}}\|g(\cdot)\|_{L^{p_{2}}}+\|f(\cdot)\|_{L^{q_{1}}}\|\Lambda^{s}g(\cdot)\|_{L^{q_{2}}}),
$$
where
$$
\frac{1}{p}=\frac{1}{p_{1}}+\frac{1}{p_{2}}=\frac{1}{q_{1}}+\frac{1}{q_{2}}.
$$
\end{lemma}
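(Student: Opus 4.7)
The plan is to prove both estimates by the Bony paraproduct machinery, reducing everything to frequency-localized pieces that are controlled by Bernstein's inequality, the Littlewood-Paley square function characterization of $L^p$ for $1<p<\infty$, and the Fefferman-Stein vector-valued maximal theorem.

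First I would fix an inhomogeneous Littlewood-Paley partition $\{\Delta_j\}_{j\ge -1}$ with low-frequency sums $S_j=\sum_{k<j}\Delta_k$, and record Bony's decomposition
$$fg = T_f g + T_g f + R(f,g), \quad T_f g := \sum_{j} S_{j-2}f\,\Delta_j g, \quad R(f,g):=\sum_{|j-k|\le 2}\Delta_j f\,\Delta_k g.$$
For the commutator $[\Lambda^s,f]g := \Lambda^s(fg)-f\Lambda^s g$, applying $\Lambda^s$ to this decomposition and subtracting $f\Lambda^s g = T_f\Lambda^s g + T_{\Lambda^s g}f + R(f,\Lambda^s g)$ produces three types of blocks: (i) the paraproduct commutator $\Pi_1 := \sum_j[\Lambda^s, S_{j-2}f]\Delta_j g$; (ii) the symmetric low-high piece $\Pi_2 := \sum_j\bigl(\Lambda^s(S_{j-2}g\,\Delta_j f)-S_{j-2}(\Lambda^s g)\Delta_j f\bigr)$; (iii) the remainder difference $\Pi_3 := \Lambda^s R(f,g) - R(f,\Lambda^s g)$.

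The main obstacle is $\Pi_1$, because one must extract a full derivative from the slowly varying factor $S_{j-2}f$. The standard device is to write
$$[\Lambda^s, S_{j-2}f]\Delta_j g(x) = \int K_j(x-y)\bigl(S_{j-2}f(x)-S_{j-2}f(y)\bigr)\Delta_j g(y)\,dy,$$
where $K_j$ is the convolution kernel of $\Lambda^s\widetilde\Delta_j$ for a slightly enlarged projector $\widetilde\Delta_j$ with $\widetilde\Delta_j\Delta_j=\Delta_j$. A first-order Taylor expansion of $S_{j-2}f$ together with the Bernstein-type bound $\||\cdot|\,K_j\|_{L^1}\lesssim 2^{j(s-1)}$ gives the pointwise estimate
$$|\Pi_{1,j}(x)|\lesssim 2^{j(s-1)}\,\mathcal{M}(\nabla f)(x)\,\mathcal{M}(\Delta_j g)(x),$$
where $\mathcal{M}$ is the Hardy-Littlewood maximal operator. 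Hölder, the Fefferman-Stein inequality, and the Littlewood-Paley square function characterization of $L^{p_2}$ then yield $\|\Pi_1\|_{L^p}\lesssim \|\Lambda f\|_{L^{p_1}}\|\Lambda^{s-1}g\|_{L^{p_2}}$, which is the first term in the target bound.

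The blocks $\Pi_2$ and $\Pi_3$ are considerably easier: in each summand the factor $\Delta_j f$ is spectrally localized in an annulus of radius of order $2^j$, so Bernstein's inequality absorbs $\Lambda^s$ into a gain $2^{js}$ on $\Delta_j f$. The square function estimate for the dyadic sum then produces $\|\Pi_2\|_{L^p}+\|\Pi_3\|_{L^p}\lesssim \|\Lambda^s f\|_{L^{q_1}}\|g\|_{L^{q_2}}$. Adding the three contributions finishes the commutator inequality. The fractional Leibniz estimate follows from the same paraproduct decomposition without the commutator step: $\Lambda^s T_f g$ is spectrally supported where $\Delta_j g$ is, so Bernstein gives $\|\Lambda^s T_f g\|_{L^p}\lesssim \|f\|_{L^{q_1}}\|\Lambda^s g\|_{L^{q_2}}$, while $\Lambda^s T_g f$ and $\Lambda^s R(f,g)$ are bounded by $\|\Lambda^s f\|_{L^{p_1}}\|g\|_{L^{p_2}}$ by the same Bernstein and square function mechanism, completing both parts of the lemma.
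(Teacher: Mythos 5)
The paper offers no proof of this lemma at all: it is quoted verbatim from Grafakos--Oh \cite{GO.2014} (and ultimately goes back to Kato--Ponce and Kenig--Ponce--Vega), so there is no in-paper argument to compare against. Your paraproduct sketch is essentially the standard proof from the literature and the overall architecture is sound: the Bony splitting of the commutator into $\Pi_1,\Pi_2,\Pi_3$, the kernel representation of $[\Lambda^s,S_{j-2}f]\Delta_j g$ with the weighted kernel bound $\||\cdot|K_j\|_{L^1}\lesssim 2^{j(s-1)}$, and the square-function/Fefferman--Stein summation are exactly the right ingredients, and the Leibniz estimate does follow from the same decomposition without the commutator step. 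Two technical points would need care in a full write-up. First, in the Taylor step for $\Pi_1$ the gradient is evaluated at the intermediate point $x+t(y-x)$, so the passage to $\mathcal{M}(\nabla f)(x)$ is not immediate; one must check (by rescaling in $t$ and using the decay of $K_j$) that $\int |K_j(x-y)||x-y|\,|h(x+t(y-x))|\,dy\lesssim 2^{j(s-1)}\mathcal{M}h(x)$ uniformly in $t\in[0,1]$, which is true but is the one place where the ``standard device'' hides real work. Second, the remainder blocks in $\Pi_3$ and in $\Lambda^s R(f,g)$ are frequency-supported in \emph{balls} of radius $\sim 2^j$, not annuli, so Bernstein for $\Lambda^s$ and the subsequent summation over dyadic scales only close because $s>0$ (one sums a kernel like $2^{-|i-j|s}\mathbf{1}_{i\le j+C}$ in $\ell^1$); your sketch invokes the ``same square function mechanism'' without flagging that this is where the hypothesis $s>0$ is actually used. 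With those two points made explicit the argument is complete and matches the proofs in the cited reference and in standard texts.
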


Next two technical lemmas play a key role in the estimates of the Green's function in frequency domain.

\begin{lemma}\label{lem:3.2}
(1) For any $\widetilde{\alpha}\geq 0$, there exists $C_{\widetilde{\alpha}}>0$ such that
$$
\int_{0}^{t}|\eta+As\xi|^{\widetilde{\alpha}}\, ds\geq C_{\widetilde{\alpha}}\big(|\eta|^{\widetilde{\alpha}}+(At)^{\widetilde{\alpha}}|\xi|^{\widetilde{\alpha}}\big)t.
$$
(2) For any $\beta\in (-1,\ 0)$, there exists $C_{\beta}>0$ such that
\begin{equation}\label{eq:1003}
\int_{0}^{t}|\eta+As\xi|^{\beta}\, ds\leq C_{\beta}\big(|\eta|+(At)|\xi|\big)^{\beta}t.
\end{equation}
\end{lemma}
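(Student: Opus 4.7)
The plan is to reduce both inequalities to a single one-dimensional extremization: estimating $\int_I |u|^{\gamma}\,du$ over intervals $I\subset\mathbb{R}$ of prescribed length, with $\gamma=\widetilde\alpha$ for (1) and $\gamma=\beta$ for (2). The case $\xi=0$ is immediate, so assume $\xi\neq 0$ and perform the affine change of variable $u=\eta+As\xi$; this gives
\[
\int_0^t |\eta+As\xi|^{\gamma}\,ds \;=\; \frac{1}{A|\xi|}\int_I |u|^{\gamma}\,du,
\]
where $I$ is the interval with endpoints $\eta$ and $\eta+At\xi$, of length $\ell:=At|\xi|$.

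The key auxiliary fact is that among intervals of fixed length $\ell$, $\int_I |u|^\gamma\,du$ is minimized for $\gamma\geq 0$ and maximized for $\gamma\in(-1,0)$ at the symmetric interval $[-\ell/2,\ell/2]$, with common extremal value $\ell^{\gamma+1}/[(\gamma+1)2^\gamma]$. This is a one-variable calculus computation: differentiate $\int_a^{a+\ell}|u|^\gamma\,du$ in the left endpoint $a$ and check the sign of $|a+\ell|^\gamma-|a|^\gamma$ against the sign of $\gamma$. Transporting this back through the change of variables at once produces
\[
\int_0^t |\eta+As\xi|^{\widetilde\alpha}\,ds\;\geq\;\frac{(At|\xi|)^{\widetilde\alpha}\,t}{(\widetilde\alpha+1)2^{\widetilde\alpha}},\qquad
\int_0^t |\eta+As\xi|^{\beta}\,ds\;\leq\;\frac{(At|\xi|)^{\beta}\,t}{(\beta+1)2^{\beta}},
\]
accounting for the $(At|\xi|)^\gamma$ contribution in each target.

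To close the argument I split on whether $|\eta|\geq 2At|\xi|$ or $|\eta|<2At|\xi|$. In the first regime, the triangle inequality yields $|\eta+As\xi|\geq |\eta|/2$ uniformly on $[0,t]$, so pointwise $|\eta+As\xi|^{\widetilde\alpha}\geq(|\eta|/2)^{\widetilde\alpha}$ (used in (1)) and $|\eta+As\xi|^{\beta}\leq(|\eta|/2)^{\beta}$ (used in (2)), which integrate to give the $|\eta|^\gamma t$ contribution; since $|\eta|+At|\xi|\simeq|\eta|$ in this regime, this matches the target. In the second regime $|\eta|+At|\xi|\simeq At|\xi|$, so the extremization bound from the previous paragraph already delivers the full $(|\eta|+At|\xi|)^\gamma t$. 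The chief bookkeeping obstacle is (2): because $\beta<0$ reverses the direction of elementary monotonicity $x\leq y\Rightarrow x^\gamma\leq y^\gamma$, every comparison must be signed carefully. Beyond this sign tracking, the proof is just the change of variables plus the extremization above.
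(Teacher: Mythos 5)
Your proposal is correct; I checked both regimes for each part and the constants close (e.g.\ in the regime $|\eta|\ge 2At\lvert\xi\rvert$ one has $|\eta|+At|\xi|\le \tfrac32|\eta|$ and $|\eta+As\xi|\ge|\eta|/2$, while in the complementary regime $|\eta|+At|\xi|\le 3At|\xi|$ and your extremization bound $\int_0^t|\eta+As\xi|^{\gamma}\,ds \gtrless \frac{(At|\xi|)^{\gamma}t}{(\gamma+1)2^{\gamma}}$ does the work). Your route differs from the paper's in two respects. First, the paper only proves part (2), citing \cite{MX.2009} for part (1); your extremization lemma (the symmetric interval $[-\ell/2,\ell/2]$ minimizes $\int_I|u|^{\gamma}\,du$ for $\gamma\ge 0$ and maximizes it for $\gamma\in(-1,0)$ among intervals of length $\ell$) handles both signs of $\gamma$ at once, so you get a self-contained proof of (1) for free. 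Second, for part (2) the paper rescales $s=t\tau$, $\widetilde\xi=tA\xi$ to normalize $t=1$ and then, in each of the cases $|\eta|\le|\widetilde\xi|$ and $|\eta|>|\widetilde\xi|$, replaces the integrand by a worst-case shifted version and computes the resulting integral explicitly; your version isolates the same worst-case principle as a clean one-variable statement (differentiate $\int_a^{a+\ell}|u|^{\gamma}\,du$ in $a$) and then splits on $|\eta|\gtrless 2At|\xi|$, using the triangle inequality pointwise in the first regime. The two arguments are comparable in length; yours buys uniformity across (1) and (2) and makes the extremal structure explicit, while the paper's is a more direct computation for the single case it actually proves.
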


\begin{proof}
The proof of (1)  can be found in \cite[Lemma 3.1]{MX.2009} and we omit the details. Here, we only give the proof of (2). Setting $s=t\tau, \widetilde{\xi}=tA\xi$, for any $\beta\in(-1, 0)$, \eqref{eq:1003} is equivalent to the following inequality
$$
I\triangleq\int_{0}^{1}|\eta+\widetilde{\xi}\tau|^{\beta}\, d\tau\leq C_{\beta}\big(|\eta|+|\widetilde{\xi}|\big)^{\beta}.
$$
Assume $|\widetilde{\xi}|>0$ (if $|\widetilde{\xi}|=0$ the above inequality holds obviously) and then we obtain the estimate of $I$ in two cases: If $|\eta|\leq |\widetilde{\xi}|$, it is easy to get
$$
\begin{aligned}
I=&|\widetilde{\xi}|^\beta\int_{0}^{1}\Big|\frac{\eta}{\widetilde{\xi}}+\tau\Big|^{\beta}\, d\tau\leq |\widetilde{\xi}|^\beta\int_{0}^{1}\Big|\tau-\frac{|\eta|}{|\widetilde{\xi}|}\Big|^{\beta}\, d\tau\\
= &|\widetilde{\xi}|^\beta\int_{|\eta|/|\widetilde{\xi}|}^{1}\Big(\tau-\frac{|\eta|}{|\widetilde{\xi}|}\Big)^{\beta}\, d\tau+|\widetilde{\xi}|^\beta\int_{0}^{|\eta|/|\widetilde{\xi}|}\Big(\frac{|\eta|}{|\widetilde{\xi}|}-\tau\Big)^{\beta}\, d\tau\\
\leq &C|\widetilde{\xi}|^\beta\frac{1}{\beta+1}\max_{0\leq \sigma\leq 1}\big((1-\sigma)^{\beta+1}+\sigma^{\beta+1}\big)
\leq C_{\beta}\big(|\eta|+|\widetilde{\xi}|\big)^{\beta}.
\end{aligned}
$$
If $|\eta|> |\widetilde{\xi}|$, it holds that
$$
\begin{aligned}
I=&|\eta|^\beta\int_{0}^{1}\Big|1+\frac{\widetilde{\xi}}{\eta}\tau\Big|^{\beta}\, d\tau\leq |\eta|^\beta\int_{0}^{1}\Big|1-\tau\frac{|\widetilde{\xi}|}{|\eta|}\Big|^{\beta}\, d\tau\\
\leq &|\eta|^\beta\int_{0}^{1}\big|1-\tau\big|^{\beta}\, d\tau
\leq C_{\beta}\big(|\eta|+|\widetilde{\xi}|\big)^{\beta}.
\end{aligned}
$$
Combining the above two cases, we finish the proof of \eqref{eq:1003}.
\end{proof}

\begin{lemma}\label{lem:3.3}
For $A, t>0$ and $\xi,\eta\in\mathbb{R}$, there exists a positive constant $C$ such that
$$
\xi^2+(1+At)^{-2}\eta^2\le C\left(\xi^2+(\eta+At\xi)^2\right),
$$
and
$$
 \xi^2+(\eta+At\xi)^2\le C\left((1+At)^2\xi^2+\eta^2\right).
 $$
\end{lemma}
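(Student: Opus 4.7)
The plan is to prove both inequalities by elementary algebra, using only the triangle inequality together with the fact that $(At)^2$ and $1$ are each dominated by $(1+At)^2$. To streamline notation I would abbreviate $\tau=At>0$, so the two claims become
$$
\xi^2+(1+\tau)^{-2}\eta^2\le C\bigl(\xi^2+(\eta+\tau\xi)^2\bigr),
\qquad
\xi^2+(\eta+\tau\xi)^2\le C\bigl((1+\tau)^2\xi^2+\eta^2\bigr).
$$

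For the first inequality, the only nontrivial task is to control $(1+\tau)^{-2}\eta^2$ by the right-hand side. I would write $\eta=(\eta+\tau\xi)-\tau\xi$, square, and apply $(a-b)^2\le 2a^2+2b^2$ to obtain $\eta^2\le 2(\eta+\tau\xi)^2+2\tau^2\xi^2$. Dividing by $(1+\tau)^2$ and using $\tau^2/(1+\tau)^2\le 1$ and $1/(1+\tau)^2\le 1$ yields
$$
(1+\tau)^{-2}\eta^2\le 2(\eta+\tau\xi)^2+2\xi^2,
$$
and adding $\xi^2$ to both sides gives the desired bound with $C=3$.

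For the second inequality, I would similarly expand $(\eta+\tau\xi)^2\le 2\eta^2+2\tau^2\xi^2$, and then use $\tau^2\le(1+\tau)^2$ to get $(\eta+\tau\xi)^2\le 2\eta^2+2(1+\tau)^2\xi^2$. Adding $\xi^2\le(1+\tau)^2\xi^2$ to both sides produces $\xi^2+(\eta+\tau\xi)^2\le 3(1+\tau)^2\xi^2+2\eta^2\le 3\bigl((1+\tau)^2\xi^2+\eta^2\bigr)$.

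There is no genuine obstacle here; the only thing to be mildly careful about is choosing the right direction of the triangle inequality (writing $\eta$ in terms of $\eta+\tau\xi$ and $\tau\xi$ for the first bound, and keeping $\eta+\tau\xi$ as is for the second). Taking the larger of the two constants furnishes a single $C$ that works for both inequalities, completing the proof.
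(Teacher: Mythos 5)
Your proof is correct. For the second inequality you argue exactly as the paper does, expanding $(\eta+At\xi)^2\le 2\eta^2+2(At)^2\xi^2$ and absorbing $(At)^2$ and $1$ into $(1+At)^2$. For the first inequality your route differs slightly: the paper splits into the two regions $At|\xi|\le|\eta|/2$ and $At|\xi|>|\eta|/2$, bounding $|\eta+At\xi|$ from below in the first region and trading $\xi^2$ for $\eta^2/(A t)^2$ in the second, whereas you avoid any case analysis by writing $\eta=(\eta+At\xi)-At\xi$, squaring, and then dividing by $(1+At)^2$ so that both resulting terms are controlled by $(1+At)^{-2}\le 1$ and $(At)^2(1+At)^{-2}\le 1$. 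Your version is a single unified computation that yields the explicit constant $C=3$ for both bounds; the paper's case split is equally elementary but slightly longer. Either argument is perfectly adequate for the lemma.
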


\begin{proof}
The second inequality in this lemma could be obtained directly by using Cauchy's inequality:
$$
\xi^2+(\eta+At\xi)^2\le (1+2A^2t^2)\xi^2+2\eta^2,
$$
which shows that the second inequality holds.

For the first inequality, we prove it by dividing the regions. If $At|\xi|\le |\eta|/2$, it holds that
$$
\left|\eta+At\xi\right|\ge |\eta|-At|\xi|\ge |\eta|/2
$$
and thus
$$
\xi^2+(\eta+At\xi)^2\ge \xi^2+\eta^2/4.
$$
If $At|\xi|>|\eta|/2$, one has that
$$
\xi^2+(\eta+At\xi)^2\ge \xi^2/2+\xi^2/2\ge \xi^2/2+\eta^2/(8A^2t^2).
$$
Above all, we finish the proof.
\end{proof}

The final lemma of this section is about the very important $L^1$ estimate and also shows a connection between a function and its Fourier transform in some suitable Sobolev spaces:

\begin{lemma}\label{lem:3.7n}
If $\widehat{f}(\xi, \eta, \zeta)\in H^2(\mathbb{R}^3)$, one has that
\begin{equation}\label{3002}
\|f(\cdot, \cdot, \cdot)\|_{L^1}\leq C\|\widehat{f}(\cdot, \cdot, \cdot)\|_{H^2}.
\end{equation}
\end{lemma}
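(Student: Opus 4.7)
The plan is to exploit the standard weighted $L^2$ trick: insert the weight $w(x,y,z) = 1+x^2+y^2+z^2$ and apply Cauchy--Schwarz, so that the $L^1$ norm is bounded by the product of two $L^2$ quantities, one of which is the $L^2$ norm of $w^{-1}$ on $\mathbb{R}^3$ (finite, since $w^{-1}$ decays like $|x|^{-2}$ at infinity, and $\int_0^\infty r^2(1+r^2)^{-2}\,dr<\infty$), and the other is $\|wf\|_{L^2}$.

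First I would write
$$\|f\|_{L^1(\mathbb{R}^3)} = \int_{\mathbb{R}^3} w(x,y,z)^{-1}\cdot w(x,y,z)\,|f(x,y,z)|\,dx\,dy\,dz \le \|w^{-1}\|_{L^2}\,\|wf\|_{L^2},$$
and verify that $\|w^{-1}\|_{L^2(\mathbb{R}^3)}$ is a finite absolute constant. Next, by Plancherel's theorem,
$$\|wf\|_{L^2(\mathbb{R}^3)} = \|\mathcal{F}(wf)\|_{L^2(\mathbb{R}^3)},$$
and since multiplication by $x^2, y^2, z^2$ on the physical side corresponds (up to the sign) to $-\partial_\xi^2,-\partial_\eta^2,-\partial_\zeta^2$ on the Fourier side, we obtain
$$\mathcal{F}(wf)(\xi,\eta,\zeta) = \bigl(1-\partial_\xi^2-\partial_\eta^2-\partial_\zeta^2\bigr)\widehat{f}(\xi,\eta,\zeta) = (1-\Delta_\Xi)\widehat{f}(\xi,\eta,\zeta).$$
Finally, the triangle inequality gives
$$\|(1-\Delta_\Xi)\widehat{f}\|_{L^2} \le \|\widehat{f}\|_{L^2} + \|\Delta_\Xi\widehat{f}\|_{L^2} \le C\|\widehat{f}\|_{H^2(\mathbb{R}^3)},$$
which chains with the previous inequalities to yield \eqref{3002}.

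There is essentially no obstacle: the only thing to be a little careful about is that the derivatives of $\widehat{f}$ appearing after Plancherel are Fourier-side derivatives (in $\xi,\eta,\zeta$), which is precisely what the $H^2$ norm of $\widehat{f}$ controls, so the dimension count works out (in $\mathbb{R}^d$ the same argument would require $H^s$ with $s>d/2$, and $s=2>3/2$ in our case). The rigorous justification of the identity $\mathcal{F}(wf)=(1-\Delta_\Xi)\widehat{f}$ can be done first on Schwartz functions and then extended by density, which is standard and does not require any new idea.
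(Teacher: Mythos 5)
Your proposal is correct and is essentially identical to the paper's own argument: both insert the weight $1+x^2+y^2+z^2$, apply Cauchy--Schwarz against the integrable function $(1+x^2+y^2+z^2)^{-2}$, and use Plancherel together with the identity $\mathcal{F}\bigl((1+|{\cdot}|^2)f\bigr)=(1-\Delta_\Xi)\widehat{f}$ to land on $\|\widehat{f}\|_{H^2}$. No further comment is needed.
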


\begin{proof}
One could introduce
$$
F(x, y, z, t)=\big(1+(x^2+y^2+z^2)\big)f(x, y, z, t),
$$
which together with Parseval formula yields that
\begin{equation*}
\begin{split}
\|f(\cdot, \cdot, \cdot, t)\|_{L^1}\leq&\left(\int_{\mathbb{R}^3}\Big(\frac{1}{1+(x^2+y^2+z^2)}\Big)^2\, dx\, dy\, dz\right)^{1/2}\left(\int_{\mathbb{R}^3} F^2\, dx\, dy\, dz\right)^{1/2}\\
\leq& C\left(\int_{\mathbb{R}^3}\big|\widehat{F}\big|^2\, d\xi\, d\eta\, d\zeta\right)^{1/2}=C\left(\int_{\mathbb{R}^3}\Big(\big(1-\Delta\big)\widehat{f}\Big)^2\, d\xi\, d\eta\, d\zeta\right)^{1/2}\\
\leq & C\|\widehat{f}(\cdot, \cdot, \cdot, t)\|_{H^2}
\end{split}
\end{equation*}
and thus obtains \eqref{3002}.
\end{proof}

\section{The estimates of Green's function}\label{sec.3}

In this section, we construct the Green's function for \eqref{eq:1.1} and obtain its $L^p (1\le p\le \infty)$ estimates. In this paper, the Green's function and its estimates serve as a powerful tool for $L^p\ (2\leq p<\infty)$ estimate of the solution which also guarantees its global existence.

\vskip .05in

The linearized equation of \eqref{eq:1.1} is a fractional heat equation with a Couette flow and it is a linearized equation with variable coefficients. The Green's function $\mathbb{G}(x,y,z,t;x',y',z')$ of \eqref{eq:1.1} is defined as follows:

\begin{equation}\label{eq:3.1}
\begin{cases}
\partial_{t}\mathbb{G}+Ay\partial_{x}\mathbb{G}+(-\Delta)^{\alpha/2}\mathbb{G}=0,\\
\mathbb{G}(x-x', y, z-z', 0; y')=\delta(x-x', y-y', z-z').
\end{cases}
\end{equation}
Here, $(x, y, z)\in\mathbb{R}^3,\ \delta(x, y, z)$ is a 3-dimensional Dirac-delta function. Apply the Fourier transform to \eqref{eq:3.1} and get
\begin{equation}\label{eq:3.2}
\begin{cases}
\partial_{t}\widehat{\mathbb{G}}-A\xi\partial_{\eta}\widehat{\mathbb{G}}+(\xi^2+\eta^2+\zeta^2)^{\alpha/2}\widehat{\mathbb{G}}=0,\\
\widehat{\mathbb{G}}(\xi, \eta, \zeta, 0; x', y', z')=\exp(-ix'\xi-iy'\eta-iz'\zeta).
\end{cases}
\end{equation}

It is always difficult to solve \eqref{eq:3.1} directly while it could be far more easier to obtain the representation of the Green's function in frequency domain:

\begin{lemma}\label{lem:3.1}
The solution of \eqref{eq:3.2} is
\begin{equation}\label{eq:1002}
\begin{split}
\widehat{\mathbb{G}}(\xi, \eta, \zeta, t; x', y', z')
=&\exp\Big(-ix'\xi-iy'(\eta+At\xi )-iz'\zeta\Big)\\
&\cdot \exp\left(-\int_{0}^{t}\big(\xi^2+(\eta+As\xi )^2+\zeta^2\big)^{\alpha/2}\, ds\right).
\end{split}
\end{equation}
\end{lemma}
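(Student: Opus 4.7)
The plan is to solve \eqref{eq:3.2} by the method of characteristics, treating $\xi$ and $\zeta$ merely as parameters. The transport operator $\partial_{t}-A\xi\partial_{\eta}$ has straight-line characteristics along which $\eta(t)=\eta_{0}-A\xi t$, so I would introduce the shifted frequency $\tilde{\eta}=\eta+At\xi$ and the auxiliary function
$$v(\xi,\tilde{\eta},\zeta,t;x',y',z'):=\widehat{\mathbb{G}}(\xi,\tilde{\eta}-At\xi,\zeta,t;x',y',z').$$
By the chain rule, $\partial_{t}v=\bigl(\partial_{t}\widehat{\mathbb{G}}-A\xi\partial_{\eta}\widehat{\mathbb{G}}\bigr)\bigl|_{\eta=\tilde{\eta}-At\xi}$, so the transport part collapses and \eqref{eq:3.2} reduces to the pointwise ODE
$$\partial_{t}v=-\bigl(\xi^{2}+(\tilde{\eta}-At\xi)^{2}+\zeta^{2}\bigr)^{\alpha/2}v,\qquad v(\xi,\tilde{\eta},\zeta,0;x',y',z')=\exp(-ix'\xi-iy'\tilde{\eta}-iz'\zeta),$$
which is a first-order linear ODE in $t$ with a time-dependent coefficient.

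Integrating the ODE explicitly yields
$$v(\xi,\tilde{\eta},\zeta,t;x',y',z')=\exp(-ix'\xi-iy'\tilde{\eta}-iz'\zeta)\exp\!\left(-\int_{0}^{t}\bigl(\xi^{2}+(\tilde{\eta}-As\xi)^{2}+\zeta^{2}\bigr)^{\alpha/2}\,ds\right).$$
Substituting back $\tilde{\eta}=\eta+At\xi$ recovers $\widehat{\mathbb{G}}$, and the prefactor immediately reads $\exp\bigl(-ix'\xi-iy'(\eta+At\xi)-iz'\zeta\bigr)$ as in \eqref{eq:1002}. For the remaining integral, the substitution $s\mapsto t-s$ sends $\tilde{\eta}-As\xi=\eta+A(t-s)\xi$ to $\eta+As\xi$, so the integrand becomes $\bigl(\xi^{2}+(\eta+As\xi)^{2}+\zeta^{2}\bigr)^{\alpha/2}$, which is exactly the symbol appearing in \eqref{eq:1002}.

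Uniqueness is automatic: along each characteristic the problem is a scalar linear ODE whose solution is unique, so no two solutions of \eqref{eq:3.2} can share the same initial trace. In fact this step also legitimizes the pointwise manipulation above. The derivation is essentially mechanical; the only point requiring care is the bookkeeping of the two successive changes of variable, first $\eta\leftrightarrow\tilde{\eta}$ and then $s\leftrightarrow t-s$ in the dispersive symbol, to make sure the final integrand appears as $\bigl(\xi^{2}+(\eta+As\xi)^{2}+\zeta^{2}\bigr)^{\alpha/2}$ rather than the equivalent but less transparent expression with $\eta+A(t-s)\xi$. No genuine analytic obstacle arises at this stage; the real difficulties of the paper are pushed to the later task of extracting $L^{1}$ bounds for $\mathbb{G}$ from this Fourier representation.
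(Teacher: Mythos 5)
Your derivation is correct, but it proceeds by a different (and slightly more informative) route than the paper. The paper simply \emph{verifies} the closed-form expression \eqref{eq:1002}: it checks the initial condition at $t=0$ and then differentiates the formula, using the identity $A\xi\,\partial_\eta\int_0^t(\xi^2+(\eta+As\xi)^2+\zeta^2)^{\alpha/2}\,ds=\int_0^t\partial_s(\xi^2+(\eta+As\xi)^2+\zeta^2)^{\alpha/2}\,ds$ to confirm that $\eqref{eq:3.2}_1$ holds. You instead \emph{derive} the formula by the method of characteristics: passing to the shifted variable $\tilde\eta=\eta+At\xi$ collapses the transport term, reduces \eqref{eq:3.2} to a scalar linear ODE along each characteristic, and the reparametrization $s\mapsto t-s$ turns $\eta+A(t-s)\xi$ into $\eta+As\xi$, recovering exactly the integrand of \eqref{eq:1002}. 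Your bookkeeping of both changes of variable is accurate. What your approach buys is that it explains where the formula comes from and, as you note, yields uniqueness of the solution along characteristics for free, whereas the paper's verification only shows that \eqref{eq:1002} \emph{is a} solution; what the paper's approach buys is brevity. Either argument is acceptable here.
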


\begin{proof}
To verify the solution representation \eqref{eq:1002}, one needs to show that \eqref{eq:1002} satisfies the equation i.e. $\eqref{eq:3.2}_{1}$ and also the initial condition i.e. $\eqref{eq:3.2}_{2}$. When $t=0$, it is obvious that
$$
\widehat{\mathbb{G}}(\xi, \eta, \zeta, 0; x', y', z')=\exp(-ix'\xi-iy'\eta-iz'\zeta),
$$
and thus \eqref{eq:1002} meets the initial condition. After straightforward computations, one also has that
$$
\partial_{t}\widehat{\mathbb{G}}(\xi, \eta, \zeta, t; x', y', z')
=\widehat{\mathbb{G}}(\xi, \eta, \zeta, t; x', y', z')\Big(-iA\xi y'-(\xi^2+(\eta+At \xi )^2+\zeta^2)^{\alpha/2}\Big)
$$
and
$$
\begin{aligned}
A\xi\partial_{\eta}\widehat{\mathbb{G}}(\xi, \eta, \zeta, t; x', y', z')
=&\widehat{\mathbb{G}}(\xi, \eta, \zeta, t; x', y', z')\\
&\cdot\Big(-iA\xi y'+(\xi^2+\eta^2+\zeta^2)^{\alpha/2}-(\xi^2+(\eta+At \xi )^2+\zeta^2)^{\alpha/2}\Big),
\end{aligned}
$$
which justify that $\widehat{\mathbb{G}}$ also satisfies the equation $\eqref{eq:3.2}_{1}$.
\end{proof}

\subsection{$L^p\ (2\leq p\leq\infty)$ estimates of the Green's function}

The solution representation \eqref{eq:1002} of the Fourier transform $\widehat{\mathbb{G}}$ gives the $L^p\ (2\leq p\leq\infty)$ estimate of the Green's function very directly and the key estimates of this part are $L^1$ and $L^2$ estimates of $\hat{\mathbb{G}}$:

\begin{lemma}\label{lem:3.4}
For $k=k_{1}+k_{2}+k_{3}$, one has that
\begin{equation}\label{eq:3.3}
\left\||\xi|^{k_{1}}|\eta|^{k_{2}}|\zeta|^{k_{3}}\widehat{\mathbb{G}}(\cdot, \cdot, \cdot, t; x', y', z')\right\|_{L^1}\leq Ct^{-\frac{3+k}{\alpha}}(1+At)^{-k_{1}-1}
\end{equation}
and
\begin{equation}\label{eq:3.4}
\left\||\xi|^{k_{1}}|\eta|^{k_{2}}|\zeta|^{k_{3}}\widehat{\mathbb{G}}(\cdot, \cdot, \cdot, t; x', y', z')\right\|_{L^2}\leq Ct^{-\frac{3+2k}{2\alpha}}(1+At)^{-k_{1}-\frac{1}{2}}.
\end{equation}
\end{lemma}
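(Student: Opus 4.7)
The plan is to argue directly from the explicit representation \eqref{eq:1002}, since the phase factor has unit modulus, so
\[
|\widehat{\mathbb{G}}(\xi,\eta,\zeta,t;x',y',z')|=\exp\left(-\int_{0}^{t}\bigl(\xi^2+(\eta+As\xi)^2+\zeta^2\bigr)^{\alpha/2}ds\right),
\]
independently of $(x',y',z')$. The whole lemma therefore reduces to controlling moments of this purely real, non-negative symbol. First I would extract a clean product-form upper bound: using the elementary inequality $(a^2+b^2+c^2)^{\alpha/2}\ge c_\alpha(|a|^\alpha+|b|^\alpha+|c|^\alpha)$ valid for $0<\alpha\le 2$ (since the maximum of $|a|,|b|,|c|$ dominates each term up to a factor of $3$), one obtains
\[
\int_{0}^{t}\bigl(\xi^2+(\eta+As\xi)^2+\zeta^2\bigr)^{\alpha/2}ds\ge c_\alpha\,t\bigl(|\xi|^\alpha+|\zeta|^\alpha\bigr)+c_\alpha\int_{0}^{t}|\eta+As\xi|^\alpha\,ds.
\]
Applying Lemma \ref{lem:3.2}(1) with $\widetilde\alpha=\alpha$ to the remaining integral, the right-hand side is bounded below by $ct\bigl(|\xi|^\alpha+|\zeta|^\alpha+|\eta|^\alpha+(At)^\alpha|\xi|^\alpha\bigr)$. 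Consequently there is a universal $c>0$ with
\[
|\widehat{\mathbb{G}}|\le \exp\bigl(-ct(1+(At)^\alpha)|\xi|^\alpha\bigr)\exp\bigl(-ct|\eta|^\alpha\bigr)\exp\bigl(-ct|\zeta|^\alpha\bigr).
\]
This is the decisive step, and it displays exactly the mechanism that produces the enhanced factor $(1+At)^{-k_1-1}$: the Couette drift $\eta+As\xi$ forces an extra $(At)^\alpha|\xi|^\alpha$ damping in $\xi$ through Lemma \ref{lem:3.2}(1).

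Next I would insert this bound into $\||\xi|^{k_1}|\eta|^{k_2}|\zeta|^{k_3}\widehat{\mathbb{G}}\|_{L^1}$ and $\|\cdot\|_{L^2}$ and exploit that the upper bound factors across the three variables, so each norm factors into three one-dimensional moment integrals. For $L^1$ the $\xi$-integral, after the scaling $\xi=u\bigl(ct(1+(At)^\alpha)\bigr)^{-1/\alpha}$, gives
\[
\int_{\mathbb{R}}|\xi|^{k_1}e^{-ct(1+(At)^\alpha)|\xi|^\alpha}d\xi\le C\,t^{-(1+k_1)/\alpha}\bigl(1+(At)^\alpha\bigr)^{-(1+k_1)/\alpha},
\]
and since $(1+(At)^\alpha)^{1/\alpha}$ and $1+At$ are comparable up to constants for $t>0$, this contributes $t^{-(1+k_1)/\alpha}(1+At)^{-(1+k_1)}$. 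The $\eta$- and $\zeta$-integrals by the same scaling yield $t^{-(1+k_2)/\alpha}$ and $t^{-(1+k_3)/\alpha}$ respectively. Multiplying the three factors produces precisely the right-hand side of \eqref{eq:3.3}. The $L^2$ estimate is identical after replacing $k_j$ by $2k_j$ inside the integrals and taking a square root at the end, which gives $t^{-(3+2k)/(2\alpha)}(1+At)^{-(k_1+1/2)}$ as in \eqref{eq:3.4}.

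The computation is conceptually straightforward once the key lower bound on the exponent is in place, so I do not anticipate any real obstacle; the only delicate point is the application of Lemma \ref{lem:3.2}(1) to pull the $(At)^\alpha|\xi|^\alpha$ term out of $\int_{0}^{t}|\eta+As\xi|^\alpha ds$, which is what turns what would otherwise be a plain fractional-heat-kernel estimate into one with the mixing-enhanced damping factor $(1+At)^{-k_1-1}$.
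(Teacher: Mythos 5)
Your proposal is correct and follows essentially the same route as the paper: the paper likewise bounds $|\widehat{\mathbb{G}}|$ by $\exp\bigl(-C(|\xi|^{\alpha}t(1+At)^{\alpha}+|\eta|^{\alpha}t+|\zeta|^{\alpha}t)\bigr)$ using the equivalence of $(\xi^2+(\eta+As\xi)^2+\zeta^2)^{\alpha/2}$ with $|\xi|^{\alpha}+|\eta+As\xi|^{\alpha}+|\zeta|^{\alpha}$ together with Lemma \ref{lem:3.2}(1), and then evaluates the resulting separated moment integrals. Your scaling computation and the observation that $(1+(At)^{\alpha})^{1/\alpha}\asymp 1+At$ fill in the same details, so there is nothing to add.
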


 \begin{proof}
 Lemma \ref{lem:3.2} together with Lemma \ref{lem:3.1} yields that
 \begin{equation}\label{eq:1004}
 \begin{split}
 \Big|\widehat{\mathbb{G}}(\xi, \eta, \zeta, t; x', y', z')\Big|\leq & \exp\left(-C\int_{0}^{t}\big(|\xi|^{\alpha}+|\eta+As \xi |^{\alpha}+|\zeta|^{\alpha}\big)\, ds\right)\\
 \leq&\exp\Big(-C\big(|\xi|^{\alpha}t(1+At)^{\alpha}+|\eta|^{\alpha}t+|\zeta|^{\alpha}t\big)\Big),
 \end{split}
 \end{equation}
 where we use the following inequality
 $$
\begin{aligned}
1/3\big(|\xi|^{\alpha}+|\eta+As \xi |^{\alpha}+|\zeta|^{\alpha}\big) \leq \big(\xi^2+(\eta+As \xi )^2+\zeta^2\big)^{\alpha/2}\leq \big(|\xi|^\alpha+|\eta+As \xi |^{\alpha}+|\zeta|^{\alpha}\big).
\end{aligned}
$$
Based on the inequality \eqref{eq:1004}, the $L^1$ and $L^2$ norm of $|\xi|^{k_{1}}|\eta|^{k_{2}}|\zeta|^{k_{3}}\widehat{\mathbb{G}}(x,y,z, t;x', y', z')$ can be estimated as follows:
 \begin{equation*}
 \begin{split}
& \left\||\xi|^{k_{1}}|\eta|^{k_{2}}|\zeta|^{k_{3}}\widehat{\mathbb{G}}(\cdot, \cdot, \cdot, t; x', y', z')\right\|_{L^1}\\
\leq & \int_{\mathbb{R}^3}|\xi|^{k_{1}}|\eta|^{k_{2}}|\zeta|^{k_{3}}\exp\Big(-C\big(|\xi|^{\alpha}t(1+At)^{\alpha}
 +|\eta|^{\alpha}t+|\zeta|^{\alpha}t\big)\Big)\, d\xi\, d\eta\, d\zeta\\
 \leq & Ct^{-\frac{3+k}{\alpha}}(1+At)^{-k_{1}-1}
 \end{split}
\end{equation*}
 and
\begin{equation*}
\begin{split}
&  \left\||\xi|^{k_{1}}|\eta|^{k_{2}}|\zeta|^{k_{3}}\widehat{\mathbb{G}}(\cdot, \cdot, \cdot, t; x', y', z')\right\|_{L^2}\\
\leq & \int_{\mathbb{R}^3}|\xi|^{2 k_{1}}|\eta|^{2 k_{2}}|\zeta|^{2 k_{3}}\exp\Big(-C\big(|\xi|^{\alpha}t(1+At)^{\alpha}+|\eta|^{\alpha}t+|\zeta|^{\alpha}t\big)\Big)\, d\xi\, d\eta\, d\zeta\\
\leq & Ct^{-\frac{3+2k}{2\alpha}}(1+At)^{-k_{1}-\frac{1}{2}},
\end{split}
\end{equation*}
which verify \eqref{eq:3.3} and \eqref{eq:3.4} respectively.
 \end{proof}

Now one could combine Parseval formula, Young's inequality and Lemma \ref{lem:3.4} to obtain the $L^p (2\leq p\leq\infty)$ estimates of $\mathbb{G}$:

\begin{lemma}\label{lem:3.5}
 For $p\in [2,\infty], k=k_{1}+k_{2}+k_{3}$, it holds that
 \begin{equation}\label{eq:3.5}
\left \|\partial_{x}^{k_{1}}\partial_{y}^{k_{2}}\partial_{z}^{k_{3}}\mathbb{G}(\cdot, \cdot, \cdot, t; x', y', z')\right\|_{L^p}\leq C t^{-\frac{3}{\alpha}\big(1-\frac{1}{p}\big)-\frac{k}{\alpha}}(1+At)^{-k_{1}-\big(1-\frac{1}{p}\big)}
 \end{equation}
 and
\begin{equation}\label{eq:3.6}
\left \|\partial_{x}^{k_{1}}\partial_{y}^{k_{2}}\partial_{z}^{k_{3}}\mathbb{G}(x, y, z, t; \cdot, \cdot, \cdot)\right\|_{L^p}\leq C t^{-\frac{3}{\alpha}\big(1-\frac{1}{p}\big)-\frac{k}{\alpha}}(1+At)^{-k_{1}-\big(1-\frac{1}{p}\big)}.
 \end{equation}
 \end{lemma}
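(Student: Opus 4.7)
The plan is to prove \eqref{eq:3.5} first using only Plancherel's identity, the pointwise bound $\|f\|_{L^\infty}\le C\|\widehat{f}\|_{L^1}$, and the two Fourier-side estimates of Lemma \ref{lem:3.4}, then to interpolate between the endpoints $p=2$ and $p=\infty$, and finally to deduce \eqref{eq:3.6} from \eqref{eq:3.5} by exploiting the Couette-shifted translation structure built into the solution formula \eqref{eq:1002}.

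For \eqref{eq:3.5} at $p=2$, Plancherel gives
\[
\|\partial_x^{k_1}\partial_y^{k_2}\partial_z^{k_3}\mathbb{G}\|_{L^2} \;=\; C\,\||\xi|^{k_1}|\eta|^{k_2}|\zeta|^{k_3}\widehat{\mathbb{G}}\|_{L^2},
\]
which by \eqref{eq:3.4} is bounded by $Ct^{-(3+2k)/(2\alpha)}(1+At)^{-k_1-1/2}$, matching \eqref{eq:3.5} at $p=2$. At $p=\infty$, Fourier inversion yields $\|\partial_x^{k_1}\partial_y^{k_2}\partial_z^{k_3}\mathbb{G}\|_{L^\infty}\le C\||\xi|^{k_1}|\eta|^{k_2}|\zeta|^{k_3}\widehat{\mathbb{G}}\|_{L^1}$, and \eqref{eq:3.3} provides the bound $Ct^{-(3+k)/\alpha}(1+At)^{-k_1-1}$, matching \eqref{eq:3.5} at $p=\infty$. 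For $2<p<\infty$, the elementary inequality $\|f\|_{L^p}\le \|f\|_{L^2}^{2/p}\|f\|_{L^\infty}^{1-2/p}$ interpolates the two endpoint bounds; a direct algebraic check shows that the exponents $-(3+2k)/(2\alpha)$ and $-(3+k)/\alpha$ on $t$ interpolate to exactly $-\frac{3}{\alpha}(1-\frac{1}{p})-\frac{k}{\alpha}$, while $-k_1-\frac{1}{2}$ and $-k_1-1$ on $1+At$ interpolate to $-k_1-(1-\frac{1}{p})$.

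For \eqref{eq:3.6}, the phase factor $\exp(-ix'\xi-iy'(\eta+At\xi)-iz'\zeta)$ in \eqref{eq:1002} factors out of the inverse Fourier transform and produces the translation identity
\[
\mathbb{G}(x,y,z,t;x',y',z') \;=\; G_0\!\left(x-x'-Aty',\;y-y',\;z-z',\;t\right),
\]
where $G_0(X,Y,Z,t)$ is the inverse Fourier transform of the purely $(\xi,\eta,\zeta,t)$-dependent factor in \eqref{eq:1002}. The change of variables $(x',y',z')\mapsto(X,Y,Z):=(x-x'-Aty',y-y',z-z')$ has unit Jacobian, so $L^p$ norms in $(x',y',z')$ equal $L^p$ norms in $(X,Y,Z)$, which coincide with the $(x,y,z)$ norms already controlled by \eqref{eq:3.5}. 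The chain rule yields
\[
\partial_{x'}^{k_1}\partial_{y'}^{k_2}\partial_{z'}^{k_3}\mathbb{G} \;=\; (-1)^{k}\sum_{j=0}^{k_2}\binom{k_2}{j}(At)^{j}\left(\partial_X^{k_1+j}\partial_Y^{k_2-j}\partial_Z^{k_3}G_0\right)(X,Y,Z,t),
\]
and applying \eqref{eq:3.5} term by term bounds the $j$-th summand by $C(At)^j\,t^{-\frac{3}{\alpha}(1-\frac{1}{p})-\frac{k}{\alpha}}(1+At)^{-(k_1+j)-(1-\frac{1}{p})}$; the elementary estimate $(At)^j(1+At)^{-j}\le 1$ absorbs the $(At)^j$ prefactor, and the finite binomial sum collapses to the claimed bound.

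The analysis is almost mechanical once Lemma \ref{lem:3.4} is in hand, and the only non-routine step is the book-keeping just described: the large multiplier $(At)^{k_2}$ generated by expanding $\partial_{y'}^{k_2}$ in terms of $\partial_X$ and $\partial_Y$ is precisely balanced against the enhanced-dissipation weight $(1+At)^{-(k_1+k_2)}$ coming from \eqref{eq:3.5}. It is this balance that explains the perhaps surprising fact that the exponent on $1+At$ in both \eqref{eq:3.5} and \eqref{eq:3.6} depends only on $k_1$, rather than on the total order of differentiation.
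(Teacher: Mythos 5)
Your proof of \eqref{eq:3.5} is exactly the paper's argument (Plancherel at $p=2$, the $L^1$-Fourier bound at $p=\infty$, interpolation via Lemma \ref{lem:3.4}), and your treatment of \eqref{eq:3.6} rests on the same translation identity \eqref{eq:3.9}, so the approach is essentially identical and correct. One small remark: the binomial chain-rule expansion you carry out computes $\partial_{x'}^{k_1}\partial_{y'}^{k_2}\partial_{z'}^{k_3}\mathbb{G}$, whereas \eqref{eq:3.6} takes derivatives in the unprimed variables with the norm over the primed ones, for which the unit-Jacobian change of variables alone already suffices; your extra computation is nevertheless correct and is precisely the content the paper later records in \eqref{eq:3.32} and Proposition \ref{prop:3.8}.
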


\begin{proof}
Using the interpolation inequality, Parseval formula, Young's inequality and Lemma \ref{lem:3.4}, one has that
 \begin{equation*}
 \begin{split}
&\left\|\partial_{x}^{k_{1}}\partial_{y}^{k_{2}}\partial_{z}^{k_{3}}\mathbb{G}(\cdot, \cdot, \cdot, t; x', y', z')\right\|_{L^p}\\
 \leq&C\left\|\partial_{x}^{k_{1}}\partial_{y}^{k_{2}}\partial_{z}^{k_{3}}\mathbb{G}(\cdot, \cdot, \cdot, t; x', y', z')\right\|_{L^2}^{2/p}
 \cdot\left\|\partial_{x}^{k_{1}}\partial_{y}^{k_{2}}\partial_{z}^{k_{3}}\mathbb{G}(\cdot, \cdot, \cdot, t; x', y', z')\right\|_{L^{\infty}}^{1-2/p}\\
 \leq&C\left\||\xi|^{k_{1}}|\eta|^{k_{2}}|\zeta|^{k_{3}}\widehat{\mathbb{G}}(\cdot, \cdot, \cdot, t; x', y', z')\right\|_{L^2}^{2/p}\cdot\left\||\xi|^{k_{1}}|\eta|^{k_{2}}|\zeta|^{k_{3}}\widehat{\mathbb{G}}(\cdot, \cdot, \cdot, t; x', y', z')\right\|_{L^1}^{1-2/p}\\
 \leq&Ct^{-\frac{3}{\alpha}\big(1-\frac{1}{p}\big)-\frac{k}{\alpha}}(1+At)^{-k_{1}-\big(1-\frac{1}{p}\big)},
 \end{split}
 \end{equation*}
 which yields \eqref{eq:3.5}.

 The proof of \eqref{eq:3.6} is based on the relationship between the variables $(x, y, z)$ and $(x', y', z')$: Since one has that
 \begin{equation}\label{eq:3.9}
 \begin{split}
 &\mathbb{G}(x, y, z, t; x', y', z')\\
 =&\int_{\mathbb{R}^3}e^{i(x\xi+y\eta+z\zeta)}\widehat{\mathbb{G}}(\xi, \eta, \zeta, t; x', y', z')\, d\xi\, d\eta\ d\zeta\\
 =&\int_{\mathbb{R}^3}e^{i\big((x-x'-Aty')\xi+(y-y')\eta+(z-z')\zeta\big)}\exp\left(-\int_{0}^{t}\big(\xi^2+(\eta+As \xi )^2+\zeta^2\big)^{\alpha/2}\, ds\right)\, d\xi\, d\eta\, d\zeta\\
 =&F(x-x'-Aty', y-y', z-z', t),
 \end{split}
 \end{equation}
 the derivatives and also integrals about $(x', y', z')$ could be transferred to those about $(x, y, z)$ and thus one could obtain \eqref{eq:3.6}  from \eqref{eq:3.5}.
 \end{proof}

The estimates in Lemma \ref{lem:3.5} contain singularities at $t=0$ and this fact may cause troubles in the later closure of nonlinearity. One still looks for an estimate without such kind of singularities.

\subsection{$L^1$ estimate of the Green's function} If one would like to show that Lemma \ref{lem:3.5} is also correct for $p<2$, more details about the Fourier transform $\widehat{\mathbb{G}}$ would be required: Denote $\widehat{\mathbb{G}}=\widehat{\mathbb{G}}_{1}\widehat{\mathbb{G}}_{2}$ with
$$
\begin{aligned}
\widehat{\mathbb{G}}_{1}(\xi, \eta, \zeta, t; x', y', z')&=\exp\big(-ix'\xi-iy'(\eta+At \xi)-iz'\zeta\big),\\
\widehat{\mathbb{G}}_{2}(\xi, \eta, \zeta, t)&=\exp\left(-\int_{0}^{t}\big(\xi^2+(\eta+As \xi )^2+\zeta^2\big)^{\alpha/2}\, ds\right).
\end{aligned}
$$

\begin{lemma}\label{lem:3.6}
For $\alpha\in(1, 2]$ and $k=k_1+k_2+k_3=1$ or $2$, there exists a positive constant $C$ such that
$$
\left\|\partial_{x}^{k_{1}}\partial_{y}^{k_2}\partial_{z}^{k_{3}}\mathbb{G}_{2}(\cdot, \cdot, \cdot, t)\right\|_{L^1}\leq Ct^{-\frac{k}{\alpha}}(1+At)^{-k_{1}}.
$$
\end{lemma}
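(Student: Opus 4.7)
The plan is to scale out the parameters $t$ and $A$ first and then apply Lemma~\ref{lem:3.7n} to a one-parameter family of normalized kernels; the anisotropic rescalings in the $x$ and $(y,z)$ directions play the role of the space-frequency mixed decomposition flagged in the introduction. Concretely, I would introduce
$$
\tilde{\xi} = (1+At)t^{1/\alpha}\xi,\qquad \tilde{\eta} = t^{1/\alpha}\eta,\qquad \tilde{\zeta} = t^{1/\alpha}\zeta,\qquad \tau = \frac{At}{1+At}\in[0,1).
$$
Substituting into \eqref{eq:1002} and setting $s=t\tilde{s}$ in the time integral yields $\widehat{\mathbb{G}}_2(\Xi,t) = \widehat{H}_\tau(\tilde{\Xi})$ with
$$
\widehat{H}_\tau(\tilde{\Xi}) = \exp\left(-\int_0^1\bigl((1-\tau)^2\tilde{\xi}^2+(\tilde{\eta}+\tau\tilde{s}\tilde{\xi})^2+\tilde{\zeta}^2\bigr)^{\alpha/2}\,d\tilde{s}\right),
$$
and the diagonal Jacobian of the inverse Fourier transform gives
$$
\|\partial_x^{k_1}\partial_y^{k_2}\partial_z^{k_3}\mathbb{G}_2(\cdot,t)\|_{L^1(\mathbb{R}^3)} = (1+At)^{-k_1}\,t^{-k/\alpha}\,\|\partial_{\tilde{x}}^{k_1}\partial_{\tilde{y}}^{k_2}\partial_{\tilde{z}}^{k_3}H_\tau\|_{L^1(\mathbb{R}^3)},
$$
where $H_\tau$ is the inverse Fourier transform of $\widehat{H}_\tau$. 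The target therefore reduces to the $\tau$-uniform bound $\|\partial^k H_\tau\|_{L^1}\le C$ for $\tau\in[0,1)$.

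For this I would invoke Lemma~\ref{lem:3.7n} on $\partial^k H_\tau$, which reduces matters to a $\tau$-uniform $H^2$ estimate for $(i\tilde{\Xi})^k\widehat{H}_\tau$. Set $Q(\tilde{s})=(1-\tau)^2\tilde{\xi}^2+(\tilde{\eta}+\tau\tilde{s}\tilde{\xi})^2+\tilde{\zeta}^2$. A pointwise convexity estimate, Lemma~\ref{lem:3.2}(1), and the elementary bound $(1-\tau)^\alpha+\tau^\alpha\ge 2^{1-\alpha}$ on $[0,1]$ for $\alpha\in(1,2]$ combine to give
$$
\int_0^1 Q(\tilde{s})^{\alpha/2}\,d\tilde{s}\ge c_\alpha\bigl(|\tilde{\xi}|^\alpha+|\tilde{\eta}|^\alpha+|\tilde{\zeta}|^\alpha\bigr),
$$
so $|\widehat{H}_\tau(\tilde{\Xi})|\le \exp(-c_\alpha|\tilde{\Xi}|^\alpha)$ uniformly in $\tau$, and the $L^2$ piece of the $H^2$ norm follows from Parseval.

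The main obstacle lies in the first and second Fourier derivatives of $\widehat{H}_\tau$, where the non-analytic symbol $Q^{\alpha/2}$ produces negative powers of $Q$. The first derivative is benign: $|\nabla_{\tilde{\Xi}}Q|\le C\sqrt{Q}$ gives $|\nabla_{\tilde{\Xi}}\widehat{H}_\tau|\le C\,\widehat{H}_\tau\int_0^1 Q^{(\alpha-1)/2}\,d\tilde{s}\le C|\tilde{\Xi}|^{\alpha-1}\widehat{H}_\tau$, which is harmless precisely because $\alpha>1$. The hard case is the second derivative, which produces integrals of the form $\int_0^1 Q(\tilde{s})^{\alpha/2-1}\,d\tilde{s}$ and has two competing sources of degeneracy. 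When $\tau$ is near $0$ one should use $Q(\tilde{s})\ge(1-\tau)^2\tilde{\xi}^2+\tilde{\zeta}^2$, which reproduces the standard fractional heat kernel estimate and gives an $L^2_{\mathrm{loc}}$ singularity because $2(\alpha-2)+3>0$ is equivalent to $\alpha>\tfrac12$. When $\tau$ is bounded away from $0$ one should instead use $Q(\tilde{s})\ge(\tilde{\eta}+\tau\tilde{s}\tilde{\xi})^2$ together with Lemma~\ref{lem:3.2}(2), which is applicable exactly because $\alpha>1$ places $\beta=\alpha-2\in(-1,0)$, to extract
$$
\int_0^1 Q(\tilde{s})^{\alpha/2-1}\,d\tilde{s}\le C(|\tilde{\eta}|+\tau|\tilde{\xi}|)^{\alpha-2}.
$$
This is exactly where the space-frequency mixed decomposition announced in the introduction becomes essential: the frequency domain is split according to which of the two pointwise lower bounds on $Q$ dominates, and on each piece the exponential damping of $\widehat{H}_\tau$, combined with Lemma~\ref{lem:3.3} to realign the mixed $(\tilde{\xi},\tilde{\eta})$ weight with an isotropic one, yields an $L^2(\mathbb{R}^3)$-integrable singularity uniformly in $\tau\in[0,1)$. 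The polynomial prefactor $\tilde{\Xi}^k$ is smooth and subdominant to the Gaussian-type decay at infinity, so this closes the uniform $H^2$ bound and proves the lemma; the hypothesis $\alpha\in(1,2]$ is used sharply in the second-derivative step.
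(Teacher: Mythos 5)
Your proposal is correct in substance and reaches the same estimate by an equivalent but more cleanly organized route. The paper works in the unrescaled variables: it bounds $\partial_\xi^2$, $\partial_\eta^2$, $\partial_\zeta^2$ of $\xi^{k_1}\eta^{k_2}\zeta^{k_3}\widehat{\mathbb{G}}_2$ pointwise by $t$- and $A$-dependent multiples of the majorant $\mathbb{E}$, and then passes to $L^1$ via the anisotropic weight $\mathbb{B}=\big(1+\tfrac{x^2}{(1+At)^2t^{2/\alpha}}+\tfrac{y^2+z^2}{t^{2/\alpha}}\big)^{-2}$ and the four regions $\Omega_1,\dots,\Omega_4$ — which is exactly Lemma \ref{lem:3.7n} in anisotropically rescaled form. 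Your change of variables $\tilde\xi=(1+At)t^{1/\alpha}\xi$, $\tilde\eta=t^{1/\alpha}\eta$, $\tilde\zeta=t^{1/\alpha}\zeta$, $\tau=At/(1+At)$ is verified by direct substitution (note $(1-\tau)^2=(1+At)^{-2}$), the Jacobian identity for the $L^1$ norm is correct, and it collapses the paper's bookkeeping of $t^{1/\alpha}(1+At)^{\pm1}$ factors and the $\Omega_i$ decomposition into a single $\tau$-uniform $H^2$ bound on $(i\tilde\Xi)^k\widehat H_\tau$. Your case split ($\tau$ near $0$ versus $\tau$ bounded away from $0$, using respectively the lower bound $Q\ge(1-\tau)^2\tilde\xi^2+\tilde\zeta^2$ and Lemma \ref{lem:3.2}(2) applied to $(\tilde\eta+\tau\tilde s\tilde\xi)^{\alpha-2}$) is precisely the paper's $At\le1$ versus $At>1$ dichotomy, and the uniform Gaussian-type majorant obtained from Lemma \ref{lem:3.2}(1) and $(1-\tau)^\alpha+\tau^\alpha\ge 2^{1-\alpha}$ is sound. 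The rescaled formulation is arguably tidier; the paper's formulation keeps the explicit dependence on $t$ and $At$ visible at every step, which it then reuses directly in Lemma \ref{lem:3.7} and Proposition \ref{prop:3.8}.

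One justification in your second-derivative step is off, though the conclusion survives. You claim the singularity from $\int_0^1 Q^{\alpha/2-1}\,d\tilde s$ near $\tau=0$ is $L^2_{\mathrm{loc}}$ because $2(\alpha-2)+3>0$, i.e.\ $\alpha>\tfrac12$. That dimension count would be right if the lower bound you invoke vanished only at the origin of $\mathbb{R}^3$, but $(1-\tau)^2\tilde\xi^2+\tilde\zeta^2$ vanishes on the whole $\tilde\eta$-axis, a set of codimension two. The transverse integral of $(\tilde\xi^2+\tilde\zeta^2)^{\alpha-2}$ over a disc requires $2(\alpha-2)+2>0$, i.e.\ $\alpha>1$ — the same threshold as in your other case, and exactly the hypothesis of the lemma. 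So the condition $\alpha>1$ is used sharply in \emph{both} branches of the second-derivative estimate, not only in the Lemma \ref{lem:3.2}(2) branch; you should correct the exponent count accordingly. With that fixed, and with the routine Leibniz terms involving $\partial(\tilde\Xi^k)$ written out, the argument closes.
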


\begin{proof}
First we consider $k=1$. Denote
$$
\theta(\xi, \eta, \zeta, t)\equiv\xi^2+(\eta+At \xi)^2+\zeta^2
$$
and
$$
\mathbb{H}(\xi, \eta, \zeta, t)\equiv\int^t_0\big(\theta(\xi, \eta, \zeta, s)\big)^{\alpha/2}\ ds.
$$
Since $k=k_{1}+k_{2}+k_{3}=1$ and $k_{i}\, (i=1, 2, 3)$ are integers, there are three possible combinations of $k_{i}\, (i=1, 2, 3)$ and one considers each respectively.\\

\textbf{Case 1. The estimate of $\left\|\partial_{x}\mathbb{G}_{2}(\cdot, \cdot, \cdot, t)\right\|_{L^1}$}.\\

From the definition of $\theta(\xi, \eta, \zeta, t)$ and $\mathbb{H}(\xi, \eta, \zeta, t)$, one has that
\begin{equation}\label{1005}
\Big|\partial^2_{\xi}\big(\xi\widehat{\mathbb{G}}_{2}(\xi, \eta, \zeta, t)\big)\Big|
\leq C\Big(|\xi|(\mathbb{H}^\prime_\xi)^2+|\xi||\mathbb{H}^{\prime\prime}_\xi|+|\mathbb{H}^\prime_\xi|\Big)\widehat{\mathbb{G}}_{2}(\xi, \eta, \zeta, t),
\end{equation}
and then one could combine Lemma \ref{lem:3.3} to obtain that
\begin{equation}\label{4001}
\begin{split}
\mathbb{H}^\prime_\xi(\xi, \eta, \zeta, t)=&\alpha\int^t_0\big(\xi^2+(\eta+As \xi )^2+\zeta^2\big)^{\alpha/2-1}\big(\xi+(\eta+As\xi)As\big)ds\\
\leq & C\big(\xi^2(1+At)^2t^{2/\alpha}+\eta^2t^{2/\alpha}
+\zeta^2t^{2/\alpha}\big)^{(\alpha-1)/2}t^{1/\alpha}(1+At),
\end{split}
\end{equation}

\begin{equation}\label{4002}
\begin{aligned}
|\xi|\big(\mathbb{H}^\prime_\xi(\xi, \eta, \zeta, t)\big)^2\leq & C|\xi|\big(\xi^2(1+At)^2t^{2/\alpha}+\eta^2t^{2/\alpha}+\zeta^2t^{2/\alpha}\big)^{\alpha-1}t^{2/\alpha}(1+At)^2\\
\leq &C\big(\xi^2(1+At)^2t^{2/\alpha}+\eta^2t^{2/\alpha}
+\zeta^2t^{2/\alpha}\big)^{\alpha-1/2}t^{1/\alpha}(1+At).
\end{aligned}
\end{equation}
The second derivative with respect to $\xi$ of $\mathbb{H}(\xi, \eta, \zeta, t)$ could be computed similarly:
\begin{equation}\label{3000}
\begin{split}
|\xi|\mathbb{H}^{\prime\prime}_\xi(\xi, \eta, \zeta, t)=&\alpha(\alpha-2)\int^t_0|\xi|\big(\xi^2+(\eta+As \xi )^2+\zeta^2\big)^{\alpha/2-2}\big(\xi+(\eta+As\xi)As\big)^2\, ds\\
&+\alpha\int_{0}^{t}|\xi|\big(\xi^2+(\eta+As\xi)^2+\zeta^2\big)^{\alpha/2-1}\big(1+(As)^2\big)\, ds\\
\leq & C\int^t_0|\xi|\big(\xi^2+(\eta+As \xi )^2+\zeta^2\big)^{\alpha/2-1}\, ds(1+At)^2.
\end{split}
\end{equation}
In the inequality \eqref{3000}, if $At\leq 1$, one has that
$$
\begin{aligned}
|\xi|\mathbb{H}^{\prime\prime}_\xi(\xi, \eta, \zeta, t)
\leq& C\int^t_0|\xi|^{\alpha-1}\, ds(1+At)^2\leq C\big(|\xi|(1+At)t^{1/\alpha}\big)^{\alpha-1}t^{1/\alpha}(1+At).
\end{aligned}
$$
If $At> 1$,  since $\alpha/2-1\leq 0$, one could apply Lemma \ref{lem:3.2} (2) to obtain that
$$
\begin{aligned}
|\xi|\mathbb{H}^{\prime\prime}_\xi(\xi, \eta, \zeta, t)
\leq &|\xi|\int^t_0(\eta+As \xi )^{\alpha-2}\, ds(1+At)^2\leq Ct|\xi|\big(|\eta|+(At)|\xi|\big)^{\alpha-2}(1+At)^2\\
\leq &Ct|\xi|\big((1+At)|\xi|\big)^{\alpha-2}(1+At)^2\leq C\big((1+At)t^{1/\alpha}|\xi|\big)^{\alpha-1}t^{1/\alpha}(1+At).
\end{aligned}
$$
Combining the above estimates and \eqref{1005}, it holds that
\begin{equation*}
\begin{split}
\Big|\partial^2_{\xi}\big(\xi\widehat{\mathbb{G}}_{2}(\xi, \eta, \zeta, t)\big)\Big|\leq & C\Big(\big(\xi^2(1+At)^2t^{2/\alpha}+\eta^2t^{2/\alpha}
+\zeta^2t^{2/\alpha}\big)^{(\alpha-1)/2}\\
& +\big(\xi^2(1+At)^2t^{2/\alpha}+\eta^2t^{2/\alpha}
+\zeta^2t^{2/\alpha}\big)^{\alpha-1/2}\Big)t^{1/\alpha}(1+At)\widehat{\mathbb{G}}_{2}.
\end{split}
\end{equation*}
Thus one denotes
$$
\mathbb{Q}(\xi, \eta, \zeta, t)\equiv\big(\xi^2(1+At)^2t^{2/\alpha}+\eta^2t^{2/\alpha}+\zeta^{2}t^{2/\alpha}\big)
$$
and
$$
 \mathbb{E}(\xi, \eta, \zeta, t)\equiv\exp\Big(-C_{0}\big(|\xi|^{\alpha}t(1+(At)^{\alpha})+|\eta|^{\alpha}t+|\zeta|^{\alpha}t\big)\Big),
$$
where $C_{0}$ is chosen satisfying $\widehat{\mathbb{G}}_{2}\leq\mathbb{E}^2$. For any $\theta\in[0, 2]$, it is obvious that $\mathbb{Q}^{\theta}\mathbb{E}\leq C$. Thus, one has that
\begin{equation}\label{eq:3.10}
\Big|\partial^2_{\xi}\big(\xi\widehat{\mathbb{G}}_{2}(\xi, \eta, \zeta, t)\big)\Big|\leq C t^{1/\alpha}(1+At)\mathbb{E}(\xi, \eta, \zeta, t).
\end{equation}

Then consider $\Big|\partial^2_{\eta}\big(\xi\widehat{\mathbb{G}}_{2}(\xi, \eta, \zeta, t)\big)\Big|$:
\begin{equation}\label{1006}
\Big|\partial_{\eta}^2\big(\xi\widehat{\mathbb{G}}_{2}(\xi, \eta, \zeta, t)\big)\Big|\leq C \Big(|\xi|(\mathbb{H}_{\eta}^{\prime})^2+|\xi||\mathbb{H}_{\eta}^{\prime\prime}|\Big)\widehat{\mathbb{G}}_{2}(\xi, \eta, \zeta, t),
\end{equation}
and Lemma \ref{lem:3.3} results in that
\begin{equation*}
\begin{split}
\mathbb{H}_{\eta}^{\prime}(\xi,\eta,\zeta, t)=&\alpha\int_{0}^{t}\big(\xi^2+(\eta+As \xi )^2+\zeta^2\big)^{\alpha/2-1}(\eta+As \xi )\, ds\\
\leq&C\big(\xi^2(1+At)^2t^{2/\alpha}+\eta^2 t^{2/\alpha}+\zeta^2t^{2/\alpha}\big)^{(\alpha-1)/2}t^{1/\alpha}.
\end{split}
\end{equation*}
\begin{equation}\label{4010}
\begin{split}
|\xi|\big(\mathbb{H}_{\eta}^{\prime}(\xi, \eta, \zeta, t)\big)^2\leq&C|\xi|\big(\xi^2(1+At)^2t^{2/\alpha}+\eta^2t^{2/\alpha}+\zeta^2t^{2/\alpha}\big)^{\alpha-1}t^{2/\alpha}\\
\leq&C\big(\xi^2(1+At)^2t^{2/\alpha}+\eta^2t^{2/\alpha}+\zeta^2t^{2/\alpha}\big)^{\alpha-1/2}t^{1/\alpha}(1+At)^{-1},
\end{split}
\end{equation}
\begin{equation}\label{3001}
\begin{split}
|\xi|\mathbb{H}_{\eta}^{\prime\prime}(\xi, \eta,\zeta, t)= & \alpha(\alpha-2)|\xi|\int_{0}^{t}\big(\xi^2+(\eta+As \xi )^2+\zeta^2\big)^{\alpha/2-2}(\eta+As \xi )^2\, ds\\
&+\alpha|\xi|\int_{0}^{t}\big(\xi^2+(\eta+As \xi )^2+\zeta^2\big)^{\alpha/2-1}\, ds\\
\leq&C\int_{0}^{t}|\xi|\big(\xi^2+(\eta+As \xi )^2+\zeta^2\big)^{\alpha/2-1}\, ds.
\end{split}
\end{equation}
In the above inequality \eqref{3001}, similar to the estimates for \eqref{3000}, for both cases $At\leq 1$ and $At>1$, it holds that
\begin{equation*}
|\xi|\mathbb{H}_{\eta}^{\prime\prime}(\xi, \eta, \zeta, t)\leq C\big(|\xi|(1+At)t^{1/\alpha}\big)^{\alpha-1}t^{1/\alpha}(1+At)^{-1}.
\end{equation*}
Substituting all the above estimates of $|\xi|(\mathbb{H}_{\eta}^{\prime})^2$ and $|\xi||\mathbb{H}_{\eta}^{\prime\prime}|$ into \eqref{1006}, one gets that
\begin{equation}\label{1000}
\Big|\partial_{\eta}^2\big(\xi\widehat{\mathbb{G}}_{2}(\xi, \eta, \zeta, t)\big)\Big|\leq Ct^{1/\alpha}(1+At)^{-1}\mathbb{E}(\xi, \eta, \zeta, t),
\end{equation}
and by the same way, one could obtain that
\begin{equation}\label{1001}
\Big|\partial^2_{\zeta}\big(\xi\widehat{\mathbb{G}}_{2}(\xi, \eta, \zeta, t)\big)\Big|\leq C t^{1/\alpha}(1+At)^{-1}\mathbb{E}(\xi, \eta, \zeta, t).
\end{equation}

The next key step is to show the relationship between $\mathbb{G}_{2}(x, y, z, t)$ and the Fourier transformation $\widehat{\mathbb{G}}_{2}(\xi, \eta, \zeta, t)$: Denote
\begin{equation}\label{eq:3.12}
\big|\partial_{x}\mathbb{G}_{2}(x, y, z, t)\big|=C\Bigg|\int_{\mathbb{R}^3}e^{i(x\xi+y\eta+z\zeta)}\big(\xi\widehat{\mathbb{G}}_{2}(\xi, \eta, \zeta, t)\big)\, d\xi\, d\eta\, d\zeta\Bigg|\triangleq C\big|F_1(x, y, z, t)\big|,
\end{equation}

\begin{equation}\label{eq:3.13}
\big|x^{2}\partial_{x}\mathbb{G}_{2}(x, y, z, t)\big|=C\Bigg|\int_{\mathbb{R}^3}e^{i(x\xi+y\eta+z\zeta)}\partial_{\xi}^{2}\big(\xi\widehat{\mathbb{G}}_{2}(\xi, \eta, \zeta, t)\big)\, d\xi\, d\eta\, d\zeta\Bigg|\triangleq C\big|F_2(x, y, z, t)\big|
\end{equation}
and
\begin{equation}\label{eq:3.14}
\begin{split}
\big|(y^2+z^2)\big(\partial_x\mathbb{G}_{2}(x, y, z, t)\big)\big| & =C\Bigg|\int_{\mathbb{R}^3}e^{i(x\xi+y\eta+z\zeta)}\big(\partial_{\eta}^2+\partial_{\zeta}^2\big)\big(\xi\widehat{\mathbb{G}}_{2}(\xi, \eta, \zeta, t)\big)\, d\xi\, d\eta\, d\zeta\Bigg|\\
& \triangleq C\big|F_3(x, y, z, t)\big|.
\end{split}
\end{equation}
Divide the $\mathbb{R}^3$ into four regions: $\mathbb{R}^3=\Omega_{1}\cup \Omega_{2}\cup \Omega_{3}\cup \Omega_{4}$, with
$$
\begin{aligned}
\Omega_{1}=\Big\{x^2\leq t^{2/\alpha}(1+At)^2, y^2+z^2\leq t^{2/\alpha}\Big\},\\
\Omega_{2}=\Big\{x^2\leq t^{2/\alpha}(1+At)^2, y^2+z^2>t^{2/\alpha}\Big\},\\
\Omega_{3}=\Big\{x^2>t^{2/\alpha}(1+At)^2, y^2+z^2\leq t^{2/\alpha}\Big\},\\
\Omega_{4}=\Big\{x^2>t^{2/\alpha}(1+At)^2, y^2+z^2>t^{2/\alpha}\Big\},
\end{aligned}
$$
and obviously it holds that
\begin{equation}\label{eq:3.15}
1+\frac{x^2}{t^{2/\alpha}(1+At)^{2}}+\frac{y^2+z^2}{t^{2/\alpha}}\leq 3
\begin{cases}
1, & (x, y, z)\in \Omega_{1},\\
\frac{y^2+z^2}{t^{2/\alpha}}, &(x, y, z)\in \Omega_{2},\\
\frac{x^2}{t^{2/\alpha}(1+At)^2}, & (x, y, z)\in \Omega_{3},\\
\frac{x^2}{t^{2/\alpha}(1+At)^2}+\frac{y^2+z^2}{t^{2/\alpha}}, & (x, y, z)\in \Omega_{4}.
\end{cases}
\end{equation}
Setting $\mathbb{B}(x, y, z, t)\equiv\Big(1+\frac{x^2}{(1+At)^{2}t^{2/\alpha}}+\frac{(y^2+z^2)}{t^{2/\alpha}}\Big)^{-2}$, and combining Lemma \ref{lem:3.2}-\ref{lem:3.3}, one has the following estimates for $\mathbb{B}(x, y, z, t)$ and $\mathbb{E}(\xi, \eta, \zeta, t)$:
\begin{equation}\label{eq:3.17}
\begin{aligned}
\|\mathbb{B}(\cdot, \cdot, \cdot, t)\|^{1/2}_{L^1}\leq &Ct^{3/(2\alpha)}(1+At)^{1/2},\ \ \
\|\mathbb{E}(\cdot, \cdot, \cdot, t)\|_{L^2}
\leq Ct^{-3/(2\alpha)}(1+At)^{-1/2}.
\end{aligned}
\end{equation}
It combined with \eqref{eq:3.12}-\eqref{eq:3.14}, the inequalities of \eqref{eq:3.10}, \eqref{1000}, \eqref{1001} and \eqref{eq:3.17}, Lemma \ref{lem:3.5} and Young's inequality, yields that
\begin{equation}\label{eq:3.16}
\begin{aligned}
& \|\partial_x \mathbb{G}_{2}(\cdot, \cdot, \cdot, t)\|_{L^1}\\
\leq & C\big(\|\mathbb{B}(\cdot, \cdot, \cdot, t)\|^{1/2}_{L^1}\|F_1(\cdot, \cdot, \cdot, t)\|_{L^2}+\|\mathbb{B}(\cdot, \cdot, \cdot, t)\|^{1/2}_{L^1}\|\big((1+At)^{-2}t^{-2/\alpha}F_2+t^{-2/\alpha}F_3\big)(\cdot, \cdot, \cdot, t)\|_{L^2}\big)\\
\leq & C(1+At)^{1/2}t^{3/(2\alpha)}\|\xi\widehat{\mathbb{G}}_{2}(\cdot, \cdot, \cdot, t)\|_{L^2}\\
&+C t^{-2/\alpha}\big((1+At)^{-2}\|\partial_{\xi}^2(\xi\widehat{\mathbb{G}}_{2})(\cdot, \cdot, \cdot, t)\|_{L^2}+
\|(\partial_{\eta}^2+\partial_{\zeta}^2)(\xi \widehat{\mathbb{G}}_{2})(\cdot, \cdot, \cdot, t)\|_{L^2}\big)\|\mathbb{B}(\cdot, \cdot, \cdot, t)\|_{L^1}^{1/2}\\
\leq & C t^{-1/\alpha}(1+At)^{-1}\\
&+C \big(t^{-1/\alpha}(1+At)^{-1}\|\mathbb{E}(\cdot, \cdot, \cdot, t)\|_{L^2}+t^{-1/\alpha}(1+At)^{-1}\|\mathbb{E}(\cdot, \cdot, \cdot, t)\|_{L^2}\big)\|\mathbb{B}(\cdot, \cdot, \cdot, t)\|_{L^1}^{1/2}\\
\leq & Ct^{-1/\alpha}(1+At)^{-1}.
\end{aligned}
\end{equation}\\

\textbf{Case 2. The estimates of $\|\partial_y \mathbb{G}_{2}(\cdot, \cdot, \cdot, t)\|_{L^1}$ and $\|\partial_z \mathbb{G}_{2}(\cdot, \cdot, \cdot, t)\|_{L^1}$}.\\

Next, we consider $\partial_y\mathbb{G}_{2}(x, y, z, t)$: One could combine
$$
\Big|\partial^2_{\xi}\big(\eta\widehat{\mathbb{G}}_{2}(\xi, \eta, \zeta, t)\big)\Big|
\leq C\Big(|\eta|(\mathbb{H}^\prime_\xi)^2+|\eta||\mathbb{H}^{\prime\prime}_\xi|\Big)\widehat{\mathbb{G}}_{2}(\xi, \eta, \zeta, t),
$$
with
$$
\begin{aligned}
|\eta|\big(\mathbb{H}^\prime_\xi(\xi, \eta, \zeta, t)\big)^2=&\alpha^2|\eta|\Big(\int^t_0\big(\xi^2+(\eta+As \xi )^2+\zeta^2\big)^{\alpha/2-1}\big(\xi+(\eta+As\xi)As\big)ds\Big)^2\\
\leq &C\big(|\eta|t^{1/\alpha}\big)\big(\xi^2(1+At)^2t^{2/\alpha}+\eta^2t^{2/\alpha}+\zeta^2t^{2/\alpha}\big)^{\alpha-1}t^{1/\alpha}(1+At)^2,
\end{aligned}
$$
and
\begin{equation*}
\begin{split}
|\eta||\mathbb{H}^{\prime\prime}_\xi(\xi, \eta, \zeta, t)|=&\alpha(\alpha-2)\int^t_0|\eta|\big(\xi^2+(\eta+As \xi )^2+\zeta^2\big)^{\alpha/2-2}\big(|\xi|+|\eta+As\xi|As\big)^2ds\\
&+\alpha\int_{0}^{t}|\eta|\big(\xi^2+(\eta+As \xi )^2+\zeta^2\big)^{\alpha/2-1}(1+(As)^2)\, ds\\
\leq& C \int^t_0|\eta|\big(\xi^2+(\eta+As \xi )^2+\zeta^2\big)^{\alpha/2-1}\, ds(1+At)^2\\
\leq& C|\eta|\big(|\xi|(A t)+|\eta|\big)^{\alpha-2}t(1+At)^2\leq C(|\eta|t^{1/\alpha})^{\alpha-1}t^{1/\alpha}(1+At)^2,
\end{split}
\end{equation*}
to obtain that
\begin{equation}\label{eq:3.21}
\Big|\partial^2_{\xi}\big(\eta\widehat{\mathbb{G}}_{2}(\xi, \eta, \zeta, t)\big)\Big|\leq C t^{1/\alpha}(1+At)^{2}\mathbb{E}(\xi, \eta, \zeta, t).
\end{equation}
Similarly
$$
\begin{aligned}
\mathbb{H}_{\eta}^\prime(\xi, \eta,\zeta, t)= & \alpha\int_{0}^{t}\big(\xi^2+(\eta+As \xi )^2+\zeta^2\big)^{\alpha/2-1}(\eta+As\xi)\, ds\\
\leq & C\big(\xi^2(1+At)^2t^{2/\alpha}+\eta^2t^{2/\alpha}+\zeta^2t^{2/\alpha}\big)^{(\alpha-1)/2}t^{1/\alpha},
\end{aligned}
$$
$$
\begin{aligned}
|\eta|\big(\mathbb{H}^\prime_\eta(\xi, \eta, \zeta, t)\big)^2=& \alpha^2 |\eta|\Big(\int^t_0\big(\xi^2+(\eta+As \xi )^2+\zeta^2\big)^{\alpha/2-1}\big(\eta+As \xi \big)\, ds\Big)^2\\
\leq &C\big(|\eta|t^{1/\alpha}\big)\big(\xi^2(1+At)^2t^{2/\alpha}
+\eta^2t^{2/\alpha}+\zeta^2t^{2/\alpha}\big)^{\alpha-1}t^{1/\alpha}
\end{aligned}
$$
and
$$
\begin{aligned}
|\eta|\big|\mathbb{H}^{\prime\prime}_\eta(\xi, \eta, \zeta, t)\big|=&\alpha(\alpha-2)\int^t_0|\eta|\big(\xi^2+(\eta+As \xi )^2+\zeta^2\big)^{\alpha/2-2}|\eta+As \xi|^2\, ds\\
&+\alpha\int_{0}^{t} |\eta|\big(\xi^2+(\eta+As \xi )^2+\zeta^2\big)^{\alpha/2-1}\, ds\\
\leq& C\int_{0}^{t}|\eta|\big(\xi^2+(\eta+As \xi )^2+\zeta^2\big)^{\alpha/2-1}\, ds\leq C(|\eta|t^{1/\alpha})^{\alpha-1}t^{1/\alpha},
\end{aligned}
$$
results in that
\begin{equation}\label{eq:3.22}
\Big|\partial^2_{\eta}\big(\eta\widehat{\mathbb{G}}_{2}(\xi, \eta, \zeta, t)\big)\Big|\leq C\big(|\mathbb{H}_{\eta}^{\prime}|+|\eta||\mathbb{H}_{\eta}^{\prime\prime}|+|\eta|(\mathbb{H}_{\eta}^{\prime})^2\big)\leq Ct^{1/\alpha}\mathbb{E}(\xi, \eta, \zeta, t).
\end{equation}
The derivatives w.r.t. $\zeta$ could be obtained in a same way:
\begin{equation}\label{eq:3.23}
\Big|\partial^2_{\zeta}\big(\eta\widehat{\mathbb{G}}_{2}(\xi, \eta, \zeta, t)\big)\Big|\leq Ct^{1/\alpha}\mathbb{E}(\xi, \eta, \zeta, t).
\end{equation}

Denote
\begin{equation}\label{eq:3.24}
\begin{aligned}
\big|\partial_{y}\mathbb{G}_{2}(x, y, z, t)\big|=&C\Bigg|\int_{\mathbb{R}^3}e^{i(x\xi+y\eta+z\zeta)}\big(\eta\widehat{\mathbb{G}}_{2}(\xi, \eta, \zeta, t)\big)\, d\xi\, d\eta\, d\zeta\Bigg|\triangleq C\big|K_1(x, y, z, t)\big|,\\
\big|x^{2}\partial_{y}\mathbb{G}_{2}(x, y, z, t)\big|=&C\Bigg|\int_{\mathbb{R}^3}e^{i(x\xi+y\eta+z\zeta)}\partial_{\xi}^{2}\big(\eta\widehat{\mathbb{G}}_{2}(\xi, \eta, \zeta, t)\big)\, d\xi\, d\eta\, d\zeta\Bigg|\triangleq C\big|K_2(x, y, z, t)\big|,\\
\big|(y^2+z^2)\big(\partial_y\mathbb{G}_{2}(x, y, z, t)\big)\big|=&C\Bigg|\int_{\mathbb{R}^3}e^{i(x\xi+y\eta+z\zeta)}(\partial_{\eta}^2+\partial_{\zeta}^2)\big(\eta\widehat{\mathbb{G}}_{2}(\xi, \eta, \zeta, t)\big)\, d\xi\, d\eta\, d\zeta\Bigg|\\
\triangleq &C\big|K_3(x, y, z, t)\big|,
\end{aligned}
\end{equation}
and one could combine \eqref{eq:3.21}--\eqref{eq:3.24} to obtain that
\begin{equation}\label{eq:3.25}
\begin{aligned}
\|\partial_y \mathbb{G}_{2}(\cdot, \cdot, \cdot, t)\|_{L^1}\leq& C\|\mathbb{B}(\cdot, \cdot, \cdot, t)\|^{1/2}_{L^1}\|K_1(\cdot, \cdot, \cdot, t)\|_{L^2}\\
& + C\big(\|\mathbb{B}(\cdot, \cdot, \cdot, t)\|^{1/2}_{L^1}\|\big((1+At)^{-2}t^{-2/\alpha} K_2+t^{-2/\alpha} K_3\big)(\cdot, \cdot, \cdot, t)\|_{L^2}\big)\\
\triangleq& J_1+J_2.
\end{aligned}
\end{equation}
By Lemma \ref{lem:3.5} and \eqref{eq:3.21}-\eqref{eq:3.23}, one has that
\begin{equation}\label{eq:3.26}
J_1\leq C(1+At)^{1/2}t^{3/(2\alpha)}\|\eta\widehat{\mathbb{G}}_{2}(\cdot, \cdot, \cdot, t)\|_{L^2}\leq C t^{-1/\alpha},
\end{equation}
\begin{equation}\label{eq:3.27}
\begin{aligned}
J_2\leq &C\|\mathbb{B}(\cdot, \cdot, \cdot, t)\|^{1/2}_{L^1}t^{-2/\alpha}\big((1+At)^{-2}\|\partial_{\xi}^2(\eta\widehat{\mathbb{G}}_{2})(\cdot, \cdot, \cdot, t)\|_{L^2}+
\|(\partial_{\eta}^2+\partial_{\zeta}^2)(\eta\widehat{\mathbb{G}}_{2})(\cdot, \cdot, \cdot, t)\|_{L^2})\big)\\
\leq &C\|\mathbb{B}(\cdot, \cdot, \cdot, t)\|^{1/2}_{L^1}t^{-2/\alpha}\big(t^{1/\alpha}\|\mathbb{E}(\cdot, \cdot, \cdot, t)\|_{L^2}\big)\leq Ct^{-1/\alpha}.
\end{aligned}
\end{equation}
And thus \eqref{eq:3.25} together with \eqref{eq:3.26}-\eqref{eq:3.27} yields that
\begin{equation}\label{eq:3.28}
\|\partial_y \mathbb{G}_{2}(\cdot, \cdot, \cdot, t)\|_{L^1}\leq Ct^{-1/\alpha}.
\end{equation}

Similarly, one could prove that
\begin{equation}\label{eq:3.29}
\|\partial_z \mathbb{G}_{2}(\cdot, \cdot, \cdot, t)\|_{L^1}\leq Ct^{-1/\alpha}.
\end{equation}
Thus we finish the proof of this lemma for $k=1$ from \eqref{eq:3.16}, \eqref{eq:3.28} and \eqref{eq:3.29}.\\

Then consider $k=2$ which contains six cases:\\

\textbf{Case 1. The estimate of $\|\partial_{x}^2\mathbb{G}_{2}(\cdot, \cdot, \cdot, t)\|_{L^1}$}.\\

From the definition of $\theta(\xi, \eta, \zeta, t)$ and $\mathbb{H}(\xi, \eta, \zeta, t)$, one has that
\begin{equation}\label{4000}
\Big|\partial_{\xi}^2\big(\xi^2\widehat{\mathbb{G}}_{2}(\xi, \eta, \zeta, t)\big)\Big|\leq C\Big(1+|\xi||\mathbb{H}_{\xi}^\prime|+|\xi|^2|\mathbb{H}_{\xi}^{\prime\prime}|+|\xi|^2|\mathbb{H}_{\xi}^{\prime}|^2\Big)\widehat{\mathbb{G}}_{2}(\xi, \eta, \zeta, t).
\end{equation}
The estimates \eqref{4001}-\eqref{4002} and \eqref{3000} result in that
\begin{equation}\label{4003}
|\xi||\mathbb{H}_{\xi}^{\prime}(\xi, \eta, \zeta, t)|\leq C\Big(\xi^2(1+At)^2t^{2/\alpha}+\eta^2t^{2/\alpha}+\zeta^2t^{2/\alpha}\Big)^{\alpha/2}
\end{equation}
\begin{equation}\label{4004}
|\xi|^2|\mathbb{H}_{\xi}^{\prime}(\xi, \eta, \zeta, t)|^2\leq C\Big(|\xi|^2(1+At)^2t^{2/\alpha}+\eta^2t^{2/\alpha}+\zeta^2t^{2/\alpha}\Big)^{\alpha},
\end{equation}
\begin{equation}\label{4005}
|\xi|^2|\mathbb{H}_{\xi}^{\prime\prime}(\xi, \eta, \zeta, t)|\leq C\int_{0}^{t}|\xi|^2\big(\xi^2+(\eta+As\xi)^2+\zeta^2\big)^{\alpha/2-1}\, ds(1+At)^2.
\end{equation}
If $At\leq 1$, \eqref{4005} can be estimated as follows:
\begin{equation}\label{4006}
|\xi|^2|\mathbb{H}_{\xi}^{\prime\prime}(\xi, \eta, \zeta, t)|\leq C\int_{0}^{t}|\xi|^{\alpha}\, ds(1+At)^2\leq C\big(|\xi|(1+At)t^{1/\alpha}\big)^{\alpha}.
\end{equation}
If $At>1$, one has that
\begin{equation}\label{4007}
|\xi|^2|\mathbb{H}_{\xi}^{\prime\prime}(\xi, \eta, \zeta, t)|\leq C\int_{0}^{t}|\xi|^2(\eta+As\xi)^{\alpha-2}\, ds(1+At)^2\leq C\big(|\xi|(1+At)t^{1/\alpha}\big)^{\alpha}.
\end{equation}
Substituting the estimates \eqref{4003}-\eqref{4007} into \eqref{4000}, one has that
\begin{equation}\label{4008}
\Big|\partial_{\xi}^2\big(\xi^2\widehat{\mathbb{G}}_{2}(\xi, \eta, \zeta, t)\big)\Big|\leq C\Big(1+\big(|\xi|^2(1+At)^2t^{2/\alpha}+\eta^2t^{2/\alpha}+\zeta^2t^{2/\alpha}\big)^{\alpha}\Big)\widehat{\mathbb{G}}_{2}(\xi, \eta, \zeta, t),
\end{equation}
which together with the definition of $\mathbb{E}(\xi, \eta, \zeta, t)$ yields that
\begin{equation*}
\Big|\partial_{\xi}^2\big(\xi^2\widehat{\mathbb{G}}_{2}(\xi, \eta, \zeta, t)\big)\Big|\leq C\mathbb{E}(\xi, \eta, \zeta, t).
\end{equation*}
The estimate of $\Big|\partial_{\eta}^{2}\big(\xi^2\widehat{\mathbb{G}}_{2}(\xi, \eta, \zeta, t)\big)\Big|$ could be obtained similarly:
\begin{equation}\label{4009}
\Big|\partial_{\eta}^2\big(\xi^2\widehat{\mathbb{G}}_{2}(\xi, \eta, \zeta, t)\big)\Big|\leq C\Big(|\xi|^2|\mathbb{H}_{\eta}^{\prime}|^2+|\xi|^2|\mathbb{H}_{\eta}^{\prime\prime}|\Big)\widehat{\mathbb{G}}_{2}(\xi, \eta, \zeta, t).
\end{equation}
From \eqref{4010} and \eqref{3001}, one has that
\begin{equation}\label{4011}
|\xi|^2|\mathbb{H}_{\eta}^{\prime}(\xi, \eta, \zeta, t)|^2\leq C\Big(\xi^2(1+At)^2t^{2/\alpha}+\eta^2t^{2/\alpha}+\zeta^2t^{2/\alpha}\Big)^2(1+At)^{-1},
\end{equation}
\begin{equation}\label{4012}
|\xi|^2|\mathbb{H}_{\eta}^{\prime\prime}(\xi, \eta, \zeta, t)|\leq C|\xi|\int_{0}^{t}|\xi|\Big(\xi^2+(\eta+As\xi)^2+\zeta^2\Big)^{\alpha/2-1}\, ds
\leq C\Big(|\xi|(1+At)t^{1/\alpha}\Big)^{\alpha}(1+At)^{-1}.
\end{equation}
The above estimates \eqref{4011}-\eqref{4012}, the inequality \eqref{4009} and the definition of $\mathbb{E}(\xi, \eta, \zeta, t)$ result in that
\begin{equation*}
\Big|\partial_{\eta}^2\big(\xi^2\widehat{\mathbb{G}}_{2}(\xi, \eta, \zeta, t)\big)\Big|\leq C(1+At)^{-2}\mathbb{E}(\xi, \eta, \zeta, t).
\end{equation*}
The estimate for $\partial_{\zeta}^2\big(\xi^2\widehat{\mathbb{G}}_{2}(\xi, \eta, \zeta, t)$ could be obtained after a same arguement:
\begin{equation*}
\Big|\partial_{\zeta}^2\big(\xi^2\widehat{\mathbb{G}}_{2}(\xi, \eta, \zeta, t)\big)\Big|\leq C(1+At)^{-2}\mathbb{E}(\xi, \eta, \zeta, t),
\end{equation*}
and thus
\begin{equation*}
\begin{split}
& \|\partial_{x}^2\mathbb{G}_{2}(\cdot, \cdot, \cdot, t)\|_{L^1}\\
\leq & C\|\mathbb{B}(\cdot, \cdot, \cdot, t)\|_{L^2}^{1/2}\|\xi^2\widehat{\mathbb{G}}_{2}(\cdot, \cdot, \cdot, t)\|_{L^2}\\
& +C\Big(\|\mathbb{B}(\cdot, \cdot, \cdot, t)\|_{L^2}^{1/2}\|\partial_{\xi}^2(\xi^2\widehat{\mathbb{G}}_{2})(\cdot, \cdot, \cdot, t)\|_{L^2}+\|\mathbb{B}(\cdot, \cdot, \cdot, t)\|_{L^2}^{1/2}\|(\partial_{\eta}^2+\partial_{\zeta}^2)(\xi^2\widehat{\mathbb{G}}_{2})(\cdot, \cdot, \cdot, t)\|_{L^2}\Big)\\
\leq & Ct^{-2/\alpha}(1+At)^{-2}.
\end{split}
\end{equation*}\\

\textbf{Case 2. The estimates of the other five derivatives.}\\

Comparing the estimate of $\|\partial_{x}\mathbb{G}_{2}(\xi, \eta, \zeta, t)\|_{L^1}$ and the cases in $k=1$, one can conclude that $\xi^2$ could absorb an extra factor $t^{1/\alpha}(1+At)$ compared with $\xi$, and $\eta^2, \zeta^2$ can absorb an extra factor $t^{1/\alpha}$ compared with $\eta, \zeta$. Thus a corresponding modification of the estimates for $k=1$ yields that
\begin{equation*}
\|\partial_{y}^2\mathbb{G}_{2}(\cdot, \cdot, \cdot, t)\|_{L^1},\ \|\partial_{z}^2\mathbb{G}_{2}(\cdot, \cdot, \cdot, t)\|_{L^1}, \ \|\partial_{y}\partial_{z}\mathbb{G}_{2}(\cdot, \cdot, \cdot, t)\|_{L^1}\leq C t^{-2/\alpha},
\end{equation*}
and
\begin{equation*}
\|\partial_{x}\partial_{y}\mathbb{G}_{2}(\cdot, \cdot, \cdot, t)\|_{L^1},\ \|\partial_{x}\partial_{z}\mathbb{G}_{2}(\cdot, \cdot, \cdot, t)\|_{L^1}\leq Ct^{-2/\alpha}(1+At)^{-1},
\end{equation*}
and we finish the proof for $k=2$.
\end{proof}

Based on the estimate of $\mathbb{G}_{2}(x, y, z, t)$ obtained in the Lemma \ref{lem:3.6}, one could give the estimate of the Green's function $\mathbb{G}$:

\begin{lemma}\label{lem:3.7}
For $\alpha\in(1, 2],\ k=k_1+k_2+k_3=1$ or $2$ and $q\in [1, 2]$, it holds that
\begin{equation}\label{eq:3.30}
\left\|\partial_{x}^{k_{1}}\partial_{y}^{k_{2}}\partial_{z}^{k_{3}}\mathbb{G}(\cdot, \cdot, \cdot, t; x', y', z')\right\|_{L^q}\leq C t^{-\frac{3}{\alpha}\big(1-\frac{1}{q}\big)-\frac{k}{\alpha}}(1+At)^{-k_{1}-\big(1-\frac{1}{q}\big)}
\end{equation}
and
\begin{equation}\label{eq:3.31}
\left\|\partial_{x}^{k_{1}}\partial_{y}^{k_{2}}\partial_{z}^{k_{3}}\mathbb{G}(x, y, z, t; \cdot, \cdot, \cdot)\right\|_{L^q}\leq C t^{-\frac{3}{\alpha}\big(1-\frac{1}{q}\big)-\frac{k}{\alpha}}(1+At)^{-k_{1}-\big(1-\frac{1}{q}\big)},
\end{equation}
for a positive constant $C$.
\end{lemma}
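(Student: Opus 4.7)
The plan is to reduce both \eqref{eq:3.30} and \eqref{eq:3.31} to $L^q$ bounds on $\mathbb{G}_2$ and its derivatives, use Lemma \ref{lem:3.6} at the endpoint $q=1$ and Lemma \ref{lem:3.4} (via Plancherel) at $q=2$, and then interpolate to cover the range $q\in(1,2)$.

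For the reduction step, I would exploit the representation \eqref{eq:3.9}, which shows that $\mathbb{G}(x,y,z,t;x',y',z')=\mathbb{G}_2(x-x'-Aty',\,y-y',\,z-z',\,t)$. Since the shear $Aty'$ depends only on $y'$ and not on $y$, the derivatives $\partial_x^{k_1}\partial_y^{k_2}\partial_z^{k_3}\mathbb{G}$ are simply the corresponding derivatives of $\mathbb{G}_2$ evaluated at the shifted arguments, with no extra multiplicative factors. Translation invariance of $L^q$ in $(x,y,z)$ therefore yields
$$
\|\partial_x^{k_1}\partial_y^{k_2}\partial_z^{k_3}\mathbb{G}(\cdot,\cdot,\cdot,t;x',y',z')\|_{L^q}=\|\partial_x^{k_1}\partial_y^{k_2}\partial_z^{k_3}\mathbb{G}_2(\cdot,\cdot,\cdot,t)\|_{L^q},
$$
and for the $(x',y',z')$ norm the affine map $(x',y',z')\mapsto(x-x'-Aty',\,y-y',\,z-z')$ has lower-triangular Jacobian with unit absolute determinant, so the change of variables gives the identical bound. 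Hence it suffices to prove the stated inequality for $\partial_x^{k_1}\partial_y^{k_2}\partial_z^{k_3}\mathbb{G}_2$ in place of $\mathbb{G}$.

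At $q=1$ this is precisely Lemma \ref{lem:3.6}. At $q=2$, since $|\widehat{\mathbb{G}}_1|=1$ one has $|\widehat{\mathbb{G}}_2|=|\widehat{\mathbb{G}}|$, so Plancherel together with Lemma \ref{lem:3.4} yields
$$
\|\partial_x^{k_1}\partial_y^{k_2}\partial_z^{k_3}\mathbb{G}_2(\cdot,\cdot,\cdot,t)\|_{L^2}\leq Ct^{-\frac{3}{2\alpha}-\frac{k}{\alpha}}(1+At)^{-k_1-\frac{1}{2}},
$$
which matches the claim at $q=2$. For an intermediate $q\in(1,2)$ I would write $\tfrac{1}{q}=\theta+\tfrac{1-\theta}{2}$ with $\theta=\tfrac{2}{q}-1\in(0,1)$ and invoke the log-convexity of $L^p$ norms, $\|f\|_{L^q}\leq\|f\|_{L^1}^{\theta}\|f\|_{L^2}^{1-\theta}$. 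A direct arithmetic check gives
$\theta(-k/\alpha)+(1-\theta)(-3/(2\alpha)-k/\alpha)=-\tfrac{3}{\alpha}\bigl(1-\tfrac{1}{q}\bigr)-\tfrac{k}{\alpha}$
and $\theta(-k_1)+(1-\theta)(-k_1-\tfrac{1}{2})=-k_1-\bigl(1-\tfrac{1}{q}\bigr)$, exactly the required exponents in $t$ and $(1+At)$.

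Since all the substantive analysis (the space-frequency mixed decomposition that cancels the $t=0$ singularity) is already packaged in Lemma \ref{lem:3.6}, no real obstacle remains here; this lemma is essentially bookkeeping. The only points that need care are verifying that the $L^2$ bound for $\mathbb{G}_2$ genuinely inherits the factor $(1+At)^{-k_1-1/2}$ from Lemma \ref{lem:3.4} (it does, because that lemma already controls $\||\xi|^{k_1}|\eta|^{k_2}|\zeta|^{k_3}\widehat{\mathbb{G}}\|_{L^2}$ and the phase $\widehat{\mathbb{G}}_1$ drops out in modulus), and that the affine change of variables preserves the $L^q$ norm uniformly in $A$ and $t$, which is automatic from its unit-determinant Jacobian.
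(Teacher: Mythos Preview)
Your proposal is correct and follows essentially the same approach as the paper: reduce to $\mathbb{G}_2$ via the representation \eqref{eq:3.9} (the paper phrases this as ``combined with the definition of $\widehat{\mathbb{G}}_1$''), use Lemma \ref{lem:3.6} for $q=1$, Plancherel with Lemma \ref{lem:3.4} for $q=2$, and interpolate. Your write-up is in fact slightly more explicit than the paper's about the unit-Jacobian change of variables handling \eqref{eq:3.31}, but the argument is the same.
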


\begin{proof}
The proof for $q=2$ is similar to the one for Lemma \ref{lem:3.5}:
$$
\left\|\partial_{x}^{k_{1}}\partial_{y}^{k_2}\partial_{z}^{k_{3}} \mathbb{G}_{2}(\cdot, \cdot, \cdot, t)\right\|_{L^2}\leq Ct^{-\frac{3}{2\alpha}-\frac{k}{\alpha}}(1+At)^{-k_{1}-\frac{1}{2}},
$$
which together with Lemma \ref{lem:3.6} and  the interpolation inequality yields that
\begin{equation}\label{3003}
\|\partial_{x}^{k_{1}}\partial_{y}^{k_2}\partial_{z}^{k_{3}} \mathbb{G}_{2}(\cdot, \cdot, \cdot, t)\|_{L^q}\leq Ct^{-\frac{3}{\alpha}\big(1-\frac{1}{q}\big)-\frac{k}{\alpha}}(1+At)^{-k_{1}-\big(1-\frac{1}{q}\big)}.
\end{equation}
It combined with the definition of $\widehat{\mathbb{G}}_{1}(\xi, \eta, \zeta, t; x', y', z')$ verifies \eqref{3003} and then \eqref{eq:3.31} could be verified by \eqref{eq:3.30} and \eqref{eq:3.9}.
\end{proof}

\subsection{Conclusion}

The Green's function is a kernel function satisfying
$$
\mathbb{G}(x, y, z, t; x', y', z')=\mathbb{G}(x-x', y, z-z', t; y')=F(x-x'-Aty', y-y', z-z', t),
$$
and thus its derivatives satisfy the following identities:
\begin{equation}\label{eq:3.32}
\begin{aligned}
\partial_{x}\mathbb{G}(x-x', y, z-z', t; y')=&-\partial_{x'}\mathbb{G}(x-x', y, z-z', t; y'),\\
\partial_{y}\mathbb{G}(x-x', y, z-z', t; y')=&-\partial_{y'}\mathbb{G}(x-x', y, z-z', t; y')\\
&+At\partial_{x'}\mathbb{G}(x-x', y, z-z', t; y'),\\
\partial_{z}\mathbb{G}(x-x', y, z-z', t; y')=&-\partial_{z'}\mathbb{G}(x-x', y, z-z', t; y').
\end{aligned}
\end{equation}

\begin{remark}\label{rem:2}
The above identities \eqref{eq:3.32} help to apply integration by parts in the closure of nonlinearity.
\end{remark}

For convenience of presentation, denote
$$
\mathbb{G}(t)\circledast f=\int_{\mathbb{R}^3}\mathbb{G}(x-x', y, z-z', t; y')f(x', y', z')\, dx'\, dy'\, dz',
$$
and
\begin{equation}\label{eq:3.33}
\interleave\mathbb{G}(t)\interleave_{L^p}=\max\Big\{\|\mathbb{G}(\cdot-x', \cdot, \cdot-z', t; y')\|_{L^p}, \|\mathbb{G}(x-\cdot, y, z-\cdot, t; \cdot)\|_{L^p}\Big\}.
\end{equation}

The following is the main result of this section:

\begin{proposition}\label{prop:3.8}
For any non-negative integer $k=k_{1}+k_{2}+k_{3},\ p\geq 2$, it holds that
\begin{equation}\label{eq:3.34}
\begin{aligned}
\interleave\partial_{x}^{k_{1}}\partial_{y}^{k_{2}}\partial_{z}^{k_{3}}\mathbb{G}(t)\interleave_{L^p}
\leq&Ct^{-\frac{3}{\alpha}\big(1-\frac{1}{p}\big)-\frac{k}{\alpha}}(1+At)^{-k_{1}-\big(1-\frac{1}{p}\big)},\\
\interleave\partial_{x'}^{k_{1}}\partial_{y'}^{k_{2}}\partial_{z'}^{k_{3}}\mathbb{G}(t)\interleave_{L^p}
\leq&Ct^{-\frac{3}{\alpha}\big(1-\frac{1}{p}\big)-\frac{k}{\alpha}}(1+At)^{-k_{1}-\big(1-\frac{1}{p}\big)}.
\end{aligned}
\end{equation}
Moreover, if $k=1, 2$, for $\ q\geq 1$, it holds that
\begin{equation}\label{eq:3.35}
\begin{aligned}
\interleave\partial_{x}^{k_{1}}\partial_{y}^{k_{2}}\partial_{z}^{k_{3}}\mathbb{G}(t)\interleave_{L^q}
\leq&Ct^{-\frac{3}{\alpha}\big(1-\frac{1}{q}\big)-\frac{k}{\alpha}}(1+At)^{-k_{1}-\big(1-\frac{1}{q}\big)},\\
\interleave\partial_{x'}^{k_{1}}\partial_{y'}^{k_{2}}\partial_{z'}^{k_{3}}\mathbb{G}(t)\interleave_{L^q}
\leq& Ct^{-\frac{3}{\alpha}\big(1-\frac{1}{q}\big)-\frac{k}{\alpha}}(1+At)^{-k_{1}-\big(1-\frac{1}{q}\big)}.
\end{aligned}
\end{equation}

If $\Lambda$ is a quasi-differential operator with the symbol $\sigma(\Lambda)=\sqrt{\xi^2+\eta^2+\zeta^2}$, and $\widetilde{\beta}\geq 0, p\ge 2$, then
\begin{equation}\label{eq:3.36}
\begin{aligned}
\interleave\Lambda_{x, y, z}^{\widetilde{\beta}}\mathbb{G}(t)\interleave_{L^p}\leq&Ct^{-\frac{3}{\alpha}\big(1-\frac{1}{p}\big)-\frac{\widetilde{\beta}}{\alpha}}(1+At)^{-\big(1-\frac{1}{p}\big)},\\
\interleave\Lambda_{x^\prime, y^\prime, z^\prime}^{\widetilde{\beta}}\mathbb{G}(t)\interleave_{L^p}\leq&Ct^{-\frac{3}{\alpha}\big(1-\frac{1}{p}\big)-\frac{\widetilde{\beta}}{\alpha}}(1+At)^{-\big(1-\frac{1}{p}\big)}.
\end{aligned}
\end{equation}
Furthermore, for any $1<\widetilde{\gamma}\leq 2$ and $q\ge 1$, one has that
\begin{equation}\label{eq:3006}
\interleave\Lambda_{x, y, z}^{\widetilde{\gamma}}\mathbb{G}(t)\interleave_{L^q}\leq Ct^{-\frac{3}{\alpha}\big(1-\frac{1}{p}\big)-\frac{\widetilde{\gamma}}{\alpha}}(1+At)^{-\big(1-\frac{1}{p}\big)}.
\end{equation}
\end{proposition}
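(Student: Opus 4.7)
Proposition \ref{prop:3.8} assembles Lemmas \ref{lem:3.5}, \ref{lem:3.6}, \ref{lem:3.7} into a single clean statement and extends them to the fractional-order operator $\Lambda$. The plan is to verify the four bounds in turn: (i) integer derivatives in the $L^p$ range $p\ge 2$; (ii) integer derivatives of order $k=1,2$ in the enlarged range $q\ge 1$; (iii) the $\Lambda^{\widetilde{\beta}}$-estimate for $p\ge 2$; and (iv) the $\Lambda^{\widetilde{\gamma}}$-estimate for $q\ge 1$ with $1<\widetilde{\gamma}\le 2$. A common feature is that the $\interleave\cdot\interleave_{L^p}$ norm demands the same bound both when one integrates over $(x,y,z)$ and when one integrates over $(x',y',z')$, so in every case I convert between the two via the translation structure \eqref{eq:3.9} and the identities \eqref{eq:3.32}.

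For \eqref{eq:3.34}, the bound integrated over $(x,y,z)$ is exactly Lemma \ref{lem:3.5}. To pass to the primed variables, apply \eqref{eq:3.32} term by term: $\partial_{x'}\mathbb{G}=-\partial_x\mathbb{G}$ and $\partial_{z'}\mathbb{G}=-\partial_z\mathbb{G}$ yield the identical bound, while $\partial_{y'}\mathbb{G}$ picks up an additional $At\,\partial_x\mathbb{G}$, which is harmless since the bound on $\partial_x\mathbb{G}$ carries the extra gain $(1+At)^{-1}$. Iterating this for higher order and taking the maximum proves \eqref{eq:3.34}. For \eqref{eq:3.35} with $k=1,2$, invoke Lemma \ref{lem:3.7} for $q\in[1,2]$ and Lemma \ref{lem:3.5} for $q\in[2,\infty]$, and convert to the primed variables in the same way.

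For \eqref{eq:3.36}, observe that $(\xi^2+\eta^2+\zeta^2)^{\widetilde{\beta}/2}\le C(|\xi|^{\widetilde{\beta}}+|\eta|^{\widetilde{\beta}}+|\zeta|^{\widetilde{\beta}})$, so the proof of Lemma \ref{lem:3.4} applies with $\widetilde{\beta}$ replacing the integer $k_i$: the Gaussian bound \eqref{eq:1004} on $\widehat{\mathbb{G}}$ is unchanged, and the $L^1$ and $L^2$ integrals in $(\xi,\eta,\zeta)$ produce exactly the stated time decay. Combining with the interpolation/Young step of Lemma \ref{lem:3.5} yields \eqref{eq:3.36}, and the primed-variable version uses the same translation argument since $\widehat{\mathbb{G}}_{1}$ is a modulus-one factor.

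The main obstacle is \eqref{eq:3006}. For integer derivatives, Lemma \ref{lem:3.6} obtained the $L^1$-bound of $\mathbb{G}_2$ through the mixed space-frequency decomposition \eqref{eq:3.15}, together with pointwise bounds on $\partial_\xi^2,\partial_\eta^2,\partial_\zeta^2$ of $|\xi|^{k_1}|\eta|^{k_2}|\zeta|^{k_3}\widehat{\mathbb{G}}_2$. For fractional $\Lambda^{\widetilde{\gamma}}$ with $1<\widetilde{\gamma}\le 2$, I would repeat this scheme with the symbol $(\xi^2+\eta^2+\zeta^2)^{\widetilde{\gamma}/2}$ in place of the integer-power product. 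The new technical point is that differentiating $|\Xi|^{\widetilde{\gamma}}$ twice creates a factor $|\Xi|^{\widetilde{\gamma}-2}$; this is locally integrable in $\mathbb{R}^3$ since $\widetilde{\gamma}-2>-3$, and more importantly its product with $\widehat{\mathbb{G}}_2$ can be controlled in the regions $\Omega_1,\ldots,\Omega_4$ because $\widetilde{\gamma}>1$ provides enough positive power of $|\Xi|$ after combination with the cross terms coming from $\mathbb{H}$. The remaining ingredients $\mathbb{H}'_\xi,\mathbb{H}''_\xi,\mathbb{H}'_\eta,\mathbb{H}''_\eta$ are exactly as in Lemma \ref{lem:3.6}, and their products with $|\Xi|^{\widetilde{\gamma}}$ are dominated by the Gaussian weight $\mathbb{E}$ times the appropriate powers of $(1+At)$ and $t^{1/\alpha}$. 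Once the $L^1$ bound is obtained this way, interpolation with the $L^2$ bound from \eqref{eq:3.36} produces \eqref{eq:3006} for every $q\ge 1$.
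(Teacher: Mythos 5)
Your treatment of \eqref{eq:3.34} and \eqref{eq:3.35} coincides with the paper's: cite Lemma \ref{lem:3.5} (resp.\ Lemma \ref{lem:3.7}), then pass to the primed variables via \eqref{eq:3.32}, noting that the extra $At\,\partial_x\mathbb{G}$ term in $\partial_{y'}\mathbb{G}$ is absorbed by the $(1+At)^{-1}$ gain attached to each $x$-derivative. Where you diverge is in the fractional estimates. The paper disposes of both \eqref{eq:3.36} and \eqref{eq:3006} in one line by applying the Gagliardo--Nirenberg inequality for fractional derivatives to interpolate between the integer-order bounds already established in \eqref{eq:3.34}--\eqref{eq:3.35}; in particular, for \eqref{eq:3006} the hypothesis $1<\widetilde{\gamma}\le 2$ is exactly what places $\Lambda^{\widetilde{\gamma}}$ between the cases $k=1$ and $k=2$ of the $L^q$, $q\ge 1$, estimate \eqref{eq:3.35}, and the worst $(1+At)$-power (the decomposition with $k_1=0$) gives the stated $(1+At)^{-(1-1/q)}$. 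Your direct Fourier-side route for \eqref{eq:3.36} (bounding $|\Xi|^{\widetilde{\beta}}\le C(|\xi|^{\widetilde{\beta}}+|\eta|^{\widetilde{\beta}}+|\zeta|^{\widetilde{\beta}})$ and rerunning Lemma \ref{lem:3.4}, whose proof never uses integrality of the exponents) is valid and arguably more self-contained. For \eqref{eq:3006}, however, your plan to redo the entire space--frequency decomposition of Lemma \ref{lem:3.6} with the non-polynomial symbol $|\Xi|^{\widetilde{\gamma}}$ is the one place where the real work is asserted rather than done: the term $|\Xi|^{\widetilde{\gamma}-2}\widehat{\mathbb{G}}_{2}$ produced by two derivatives of the symbol is singular at the origin, must be controlled in $L^2$ of the frequency variables (not merely be locally integrable), and its contribution on the anisotropic low-frequency box $|\xi|\lesssim t^{-1/\alpha}(1+At)^{-1}$, $|\eta|,|\zeta|\lesssim t^{-1/\alpha}$ converges with the right time weight only because $\widetilde{\gamma}>1$ (integrability of $(\eta^2+\zeta^2)^{\widetilde{\gamma}-2}$ in two dimensions). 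This can likely be pushed through, but it is substantially heavier than the interpolation shortcut the statement was designed for, and as written your sketch does not verify that the resulting powers of $t$ and $(1+At)$ come out right; you should either carry out those computations or simply interpolate between $k=1$ and $k=2$ as the paper does.
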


\begin{proof}
From Lemma \ref{lem:3.5}, one has that
\begin{equation}\label{eq:3.38}
\begin{aligned}
\|\partial_{x}^{k_{1}}\partial_{y}^{k_{2}}\partial_{z}^{k_{3}}\mathbb{G}(\cdot-x', \cdot, \cdot-z', t; y')\|_{L^p}\leq C t^{-\frac{3}{\alpha}\big(1-\frac{1}{p}\big)-\frac{k}{\alpha}}(1+At)^{-k_{1}-\big(1-\frac{1}{p}\big)},\\
\|\partial_{x}^{k_{1}}\partial_{y}^{k_{2}}\partial_{z}^{k_{3}}\mathbb{G}(x-\cdot, y, z-\cdot, t; \cdot)\|_{L^p}\leq Ct^{-\frac{3}{\alpha}\big(1-\frac{1}{p}\big)-\frac{k}{\alpha}}(1+At)^{-k_{1}-\big(1-\frac{1}{p}\big)},
\end{aligned}
\end{equation}
which together with the definition of $\interleave\mathbb{G}(t)\interleave_{L^p}$ verifies $\eqref{eq:3.34}_{1}$ and then the estimate $\eqref{eq:3.34}_{1}$ combined with \eqref{eq:3.32} yields $\eqref{eq:3.34}_{2}$.

One could repeat the above argument and replace the estimates \eqref{eq:3.38} by the ones in Lemma \ref{lem:3.7} to prove \eqref{eq:3.35}. After that, the inequalities \eqref{eq:3.36} and \eqref{eq:3006} could be easily obtained by Gagliardo-Nirenberg inequality (see \cite{BH.2001, EA.2015}) for fractional derivatives and \eqref{eq:3.34}-\eqref{eq:3.35}.
\end{proof}

\section{The global existence and large time behavior of the solution}\label{sec.4}

In this section, we first give a local existence of the solution and also the blow-up criterion. Later, an $L^p\ (2\leq p<\infty)$ bound of the solution results in the global existence and the $L^p\ (2\leq p<\infty)$ bound could be obtained by the Green's function $\mathbb{G}(x, y, z, t; x', y', z')$ constructed in last section.

\vskip .05in

First, the local well-posedness and the $L^p (p\geq2)$-criterion of solution to Equation \eqref{eq:1.1} are established. It is as follows.

\begin{theorem}\label{thm:4.1}
Let $\alpha\in (1,2]$ and some $p\in [2, \infty)$, if initial data $n_{0}(x, y, z)\geq 0$ and $\ n_{0}(x, y, z)\in W^{3, p}(\mathbb{R}^3)\cap L^{1}(\mathbb{R}^3)$, there exists a positive constant $T_{0}=T(n_{0},\alpha)$, such that when $t\leq T_{0}$, there exists a unique non-negative classical solution $n$ for the system \eqref{eq:1.1} satisfying
$$
n(x, y, z, t)\in C\big(W^{3, p}(\mathbb{R}^3)\cap L^1(\mathbb{R}^{3}); [0, T_{0}]\big).
$$
Moreover, for any given time $T_{1}$, if the solution satisfies
\begin{equation}\label{eq:4.1}
\lim_{t\to T_{1}}\sup_{0\leq s\leq T_{1}}\|n(\cdot, \cdot,\cdot, s)\|_{L^{p}(\mathbb{R}^3)}<\infty,
\end{equation}
then there exists a sufficiently small constant $\varepsilon>0$ such that the solution could be extended from the time $T_{1}$ to $T_{1}+\varepsilon$.
\end{theorem}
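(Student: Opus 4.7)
The plan is to solve \eqref{eq:1.1} via its Duhamel mild formulation using the Green's function $\mathbb{G}$ from Section \ref{sec.3}:
$$n(\cdot,t)=\mathbb{G}(t)\circledast n_0 - \int_0^t \mathbb{G}(t-s)\circledast \nabla\cdot\big(n(s)\mathbf{B}(n(s))\big)\,ds,$$
and to run a contraction mapping argument on a short interval $[0,T_0]$ in the Banach space $X_{T_0}=C\big([0,T_0];W^{3,p}(\mathbb{R}^3)\cap L^1(\mathbb{R}^3)\big)$ with $\|n\|_{X_{T_0}}=\sup_{0\le t\le T_0}\big(\|n(\cdot,t)\|_{W^{3,p}}+\|n(\cdot,t)\|_{L^1}\big)$. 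Let $\mathcal{N}[n]$ denote the right-hand side of the integral equation; I would show that $\mathcal{N}$ maps the closed ball $B_R=\{n\in X_{T_0}:\|n\|_{X_{T_0}}\le R\}$ into itself and is a strict contraction for $R=2\|n_0\|_{W^{3,p}\cap L^1}$ and $T_0=T_0(n_0,\alpha)$ sufficiently small; the fixed point furnishes a mild solution, which is upgraded to a classical one by the smoothing action of the fractional heat semigroup.

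For the linear contribution, one uses the fact that $\|\mathbb{G}(\cdot,t)\|_{L^1}=1$ (visible from $\widehat{\mathbb{G}}(0,0,0,t;\cdot)=1$ in \eqref{eq:1002}) together with Young's inequality (Lemma \ref{lem:2.1}) to obtain $\|\mathbb{G}(t)\circledast n_0\|_{W^{3,p}\cap L^1}\le\|n_0\|_{W^{3,p}\cap L^1}$. For the nonlinear contribution, the crucial algebraic identity is
$$\nabla\cdot(n\mathbf{B}(n))=\mathbf{B}(n)\cdot\nabla n - n^2,$$
obtained from $\nabla\cdot\mathbf{B}(n)=\Delta(-\Delta)^{-1}n=-n$. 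Hardy–Littlewood–Sobolev provides $\|\mathbf{B}(n)\|_{L^{q}}\le C\|n\|_{L^{r}}$ with $1/q=1/r-1/3$, and combined with H\"older's inequality, interpolation between $L^1$ and $L^p$, the Sobolev embedding $W^{3,p}\hookrightarrow L^\infty$ (valid for $p\ge 2$), and the product/commutator estimates of Lemma \ref{lem:2.3}, the quantity $n\mathbf{B}(n)$ and its derivatives of order $\le 2$ are controlled by $\|n\|_{X_{T_0}}^2$ in suitable $L^q$ spaces. To estimate $\mathcal{N}[n]$ in $W^{3,p}\cap L^1$ one transfers one derivative from the nonlinearity to $\mathbb{G}$ via integration by parts and applies Proposition \ref{prop:3.8}; the crucial input is the $k=1$, $q=1$ case, which yields the integrable time singularity $(t-s)^{-1/\alpha}$ (here $\alpha>1$ is essential). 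The analogous estimate for $\mathcal{N}[n]-\mathcal{N}[\tilde n]$ produces a contraction constant that tends to zero with $T_0$. Non-negativity then follows by a mollification/approximation argument: $\mathbb{G}$ is non-negative as a convolution of the positivity-preserving fractional heat kernel with the pure shear transport (see \eqref{eq:3.9}), so successive Picard iterates stay non-negative and the limit inherits the property.

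For the $L^p$ blow-up criterion \eqref{eq:4.1}, assume $M:=\sup_{0\le s<T_1}\|n(\cdot,s)\|_{L^p}<\infty$ and aim to show $\sup_{0\le s<T_1}\|n(\cdot,s)\|_{W^{3,p}\cap L^1}<\infty$; the already-established local existence, applied at the initial time $T_1$, will then extend the solution to $[0,T_1+\varepsilon]$. I would repeat the mild formulation based at an arbitrary $t_0\in[0,T_1)$: estimating $D^\vartheta n(t)$ by moving derivatives onto $\mathbb{G}$ and controlling the nonlinear factors via Gagliardo–Nirenberg interpolation between the $L^p$ bound $M$ and the $W^{3,p}$ seminorms, combined with Proposition \ref{prop:3.8} and Lemma \ref{lem:2.3}, one obtains a singular-Gr\"onwall inequality
$$\|n(\cdot,t)\|_{W^{3,p}\cap L^1}\le C(M)\Big(\|n(\cdot,t_0)\|_{W^{3,p}\cap L^1}+\int_{t_0}^t (t-s)^{-\kappa}\big(1+\|n(\cdot,s)\|_{W^{3,p}\cap L^1}\big)\,ds\Big)$$
with $\kappa<1$, which produces a finite bound on $[t_0,T_1)$ by a standard iteration. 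The main obstacle in the whole construction is the nonlinear $W^{3,p}$ estimate: one must balance the derivatives between $n$ and the nonlocal factor $\mathbf{B}(n)$ via the Kato–Ponce-type Lemma \ref{lem:2.3} so that no derivative is lost, while ensuring that the convolution against derivatives of $\mathbb{G}$ produces time singularities of order strictly less than $1$; this is precisely where the use of the estimates in Proposition \ref{prop:3.8} for $q=1$ (and hence the delicate $L^1$ result of Lemma \ref{lem:3.6}) becomes unavoidable.
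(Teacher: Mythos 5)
The paper does not actually prove Theorem \ref{thm:4.1}: immediately after the statement it declares that non-negativity ``has already been proved in many references'' and that local existence ``could be also proved by the standard method,'' and it omits all details. Your proposal therefore supplies what the paper leaves out, and the strategy you choose --- Banach fixed point for the Duhamel formulation \eqref{eq:4.8} in $C([0,T_0];W^{3,p}\cap L^1)$, using the identity $\nabla\cdot(n\mathbf{B}(n))=\mathbf{B}(n)\cdot\nabla n-n^2$, Hardy--Littlewood--Sobolev, the Kato--Ponce estimates of Lemma \ref{lem:2.3}, and the transfer of one derivative onto $\mathbb{G}$ to exploit the integrable singularity $(t-s)^{-1/\alpha}$ from the $q=1$, $k=1$ case of Proposition \ref{prop:3.8} --- is sound, consistent with the machinery the paper actually uses in Section \ref{sec.4}, and correctly identifies $\alpha>1$ as the reason the singularity is integrable. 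The continuation-criterion argument via a singular Gr\"onwall inequality with the $L^p$ bound $M$ distributed over the nonlinearity by Gagliardo--Nirenberg interpolation is also the standard and correct route.

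Two points in your sketch are not right as written. First, the claim that ``successive Picard iterates stay non-negative'' fails: the iteration $n_{k+1}=\mathbb{G}(t)\circledast n_0+\int_0^t\nabla_{x',y',z'}\mathbb{G}(t-s)\circledast(n_k\mathbf{B}(n_k))\,ds$ involves $\nabla\mathbb{G}$, which changes sign, convolved against a sign-indefinite vector field, so positivity of $n_k$ does not propagate to $n_{k+1}$. Non-negativity must instead come from a maximum principle for the linear drift--fractional-diffusion equation $\partial_t m+(Aye_1+\mathbf{B}(n))\cdot\nabla m+(-\Delta)^{\alpha/2}m=nm$ with the solution frozen in the coefficients (or, equivalently, from a linearized iteration scheme), which is presumably what the references the paper alludes to do. Relatedly, your use of $\|\mathbb{G}(t)\|_{L^1}=1$ presupposes $\mathbb{G}\ge 0$; this is true (e.g.\ by Trotter splitting of the shear transport and the stable semigroup) but should be established before, not after, it is used. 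Second, derivatives do not commute with $\mathbb{G}(t)\circledast$: by \eqref{eq:3.32}, $\partial_y$ acting on the output converts to $-\partial_{y'}+At\,\partial_{x'}$ acting on the input, so the $W^{3,p}$ estimates for both the linear and the Duhamel terms carry factors up to $(1+At)^3$. This is harmless on a bounded interval $[0,T_0]$, but it means the norm bounds (though not the existence time, since the transport is divergence-free) depend on $A$, and your estimate $\|\mathbb{G}(t)\circledast n_0\|_{W^{3,p}}\le\|n_0\|_{W^{3,p}}$ is not literally correct for the $y$-derivatives. Neither issue is fatal, but both need to be repaired for the argument to close.
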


The non-negative of the solution $n(x, y, z, t)$ of the equation \eqref{eq:1.1}  has already been proved in many references and we omit the details here. The local existence of the solution to the system \eqref{eq:1.1} could be also proved by the standard method. And the Theorem \ref{thm:4.1} tells us that we can obtain the global classical solution if the $L^p$ estimate of solution to equation \eqref{eq:1.1} is global. In this paper, we will obtain the global  estimates of higher-order derivatives of solution based on global $L^p$ decay, see Lemma \ref{lem:4.6}.

\vskip .05in

Based on the local existence of the solution, the Bootstrap argument could be used to extend the local solution to a global solution: we give the following Bootstrap hypotheses with $\delta$ to be determined later:

\begin{equation}\label{eq:4.2}
\|n(\cdot, \cdot, \cdot, t)\|_{L^{p}}\leq 2\delta(1+t)^{-\big(\frac{3}{\alpha}+1\big){\big(1-\frac 1 p\big)}}\ \ \ \ \text{ for }2\le p< \infty.
\end{equation}
Combining \eqref{eq:4.2} and the conservation of mass about $n(x, y, z, t)$, one has that
\begin{equation}\label{eq:4.3}
\|n(\cdot, \cdot, \cdot, t)\|_{L^q}\leq C\delta(1+t)^{-\big(\frac{3}{\alpha}+1\big)\big(1-\frac 1 q\big)}\ \ \ \ \text{ for }1\leq q< \infty.
\end{equation}
By the typical steps in Bootstrap argument, we are going to show that the constant $2\delta$ in \eqref{eq:4.2} could be replaced by $\delta$.

\begin{lemma}\label{lem:4.2}
If the solution $n(x,y,z,t)$ of \eqref{eq:1.1} satisfies \eqref{eq:4.2} with the constant $\delta$ given by
\begin{equation}\label{eq:4.4}
\delta=80\left(1+C_{2}\right)\max\Big\{\|n_{0}(\cdot, \cdot, \cdot)\|_{L^1}, \|n_{0}(\cdot, \cdot, \cdot)\|_{L^{\infty}}\Big\},
\end{equation}
then for any $2\le p<\infty$ it also satisfies
\begin{equation}\label{eq:4.5}
\|n(\cdot, \cdot, \cdot, t)\|_{L^{p}}\leq\delta(1+t)^{-\big(\frac{3}{\alpha}+1\big)\big(1-\frac{1}{p}\big)},
\end{equation}
where $C_{2}$ is given later.
\end{lemma}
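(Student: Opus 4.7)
The plan is to set up a Duhamel representation based on the Green's function $\mathbb{G}$ and to close a nonlinear estimate in $L^p$ by exploiting both the decay provided by Proposition \ref{prop:3.8} and the $A$-smallness contained in the $(1+At)^{-k_1-(1-1/q)}$ factors. Concretely, I would write
\begin{equation*}
n(x,y,z,t)=\mathbb{G}(t)\circledast n_{0}-\int_{0}^{t}\mathbb{G}(t-s)\circledast\nabla\cdot\bigl(n\,\mathbf{B}(n)\bigr)(s)\,ds,
\end{equation*}
and then transfer the divergence off of the nonlinearity by integration by parts in the primed variables, expressing the result in terms of $\partial_{x'},\partial_{y'},\partial_{z'}\mathbb{G}$. (Via the identities \eqref{eq:3.32}/Remark \ref{rem:2}, these primed derivatives correspond to physical-space derivatives up to the term $A(t-s)\partial_{x'}\mathbb{G}$; the crucial point is that the $L^q$ bound \eqref{eq:3.35} for $\partial_{x'}\mathbb{G}$ carries the factor $(1+A(t-s))^{-1}$, which exactly absorbs this $A(t-s)$ growth.)

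For the linear piece I would apply Lemma \ref{lem:2.1} (Young-type inequality) together with Proposition \ref{prop:3.8}, using the $\interleave\mathbb{G}(t)\interleave_{L^1}=1$ estimate (mass conservation) for small $t$ and the $\interleave\mathbb{G}(t)\interleave_{L^p}$ estimate for large $t$ (and interpolation in between) to obtain
\begin{equation*}
\|\mathbb{G}(t)\circledast n_{0}\|_{L^p}\le C\,(1+t)^{-\bigl(\tfrac{3}{\alpha}+1\bigr)\bigl(1-\tfrac{1}{p}\bigr)}\bigl(\|n_0\|_{L^1}+\|n_0\|_{L^p}\bigr),
\end{equation*}
which, for $\delta$ given by \eqref{eq:4.4}, contributes at most $\tfrac{\delta}{2}(1+t)^{-(\frac{3}{\alpha}+1)(1-\frac{1}{p})}$.

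For the nonlinear piece I would estimate $\mathbf{B}(n)=\nabla(-\Delta)^{-1}n$ by Hardy--Littlewood--Sobolev, giving $\|\mathbf{B}(n)\|_{L^{r}}\le C\|n\|_{L^{s}}$ with $\tfrac{1}{s}=\tfrac{1}{r}+\tfrac{1}{3}$, then bound $\|n\,\mathbf{B}(n)\|_{L^{a}}$ via H\"older, and finally apply Lemma \ref{lem:2.1} with the derivative bounds from \eqref{eq:3.35} (choosing $q$ slightly bigger than $1$ so that the $(1+A(t-s))^{-(1-1/q)}$ factor survives). Using the bootstrap hypothesis \eqref{eq:4.3} to control every norm of $n$ appearing on the right, the nonlinear term is dominated by
\begin{equation*}
C\delta^{2}\int_{0}^{t}(t-s)^{-\beta}\bigl(1+A(t-s)\bigr)^{-\gamma}(1+s)^{-\sigma}\,ds,
\end{equation*}
for explicit exponents $\beta<1$, $\gamma>0$, $\sigma$ matched so that the resulting integral obeys the target rate $(1+t)^{-(\frac{3}{\alpha}+1)(1-\frac{1}{p})}$. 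Splitting the $s$-integral into $[0,t/2]$ and $[t/2,t]$ handles respectively the large-time decay and the endpoint singularity (the latter being integrable thanks to $\alpha>1$ and Lemma \ref{lem:3.6}, which is precisely why the singularity-free $L^1$ estimate of $\nabla\mathbb{G}$ is needed).

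The main obstacle will be the constants: I must arrange indices so that the $(1+A(t-s))^{-\gamma}$ factor yields a global prefactor $A^{-\gamma}$ in the nonlinear bound, so that the nonlinear contribution becomes $\le C_{2}\delta^{2}A^{-\gamma}(1+t)^{-(\frac{3}{\alpha}+1)(1-\frac{1}{p})}$. Taking $A\ge A_{0}(\alpha,n_{0})$ large enough depending only on $\delta$ (equivalently on $\|n_0\|_{L^1\cap L^\infty}$) and $\alpha$, this is $\le\tfrac{\delta}{2}(1+t)^{-(\frac{3}{\alpha}+1)(1-\frac{1}{p})}$. Adding the linear and nonlinear contributions gives \eqref{eq:4.5}, closing the bootstrap.
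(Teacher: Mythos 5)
Your overall route is the same as the paper's: Duhamel with the Green's function, integration by parts to move the divergence onto $\mathbb{G}$ in the primed variables, Hardy--Littlewood--Sobolev for $\mathbf{B}(n)$, a time splitting that uses the singularity-free $L^1$ (or near-$L^1$) bound of $\nabla_{x',y',z'}\mathbb{G}$ near $s=t$ and the $(1+A(t-s))^{-\gamma}$ factors elsewhere to extract a negative power of $A$ that beats $\delta^2$ down to $\delta$. Your two-region split of $[0,t]$ with $q$ slightly above $1$ near $s=t$ (absorbing $(1+A\tau)^{-\gamma}\le A^{-\gamma}\tau^{-\gamma}$ into the still-integrable endpoint singularity) is exactly the device the paper uses in its intermediate-time case \eqref{eq:4.17}; the paper's Case~1 instead carves out an extra window $[t-A^{-\theta},t]$ and gets smallness from the window length. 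These are interchangeable, and your exponent bookkeeping is consistent with the paper's.

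The one step I would not accept as written is the small-time bound on the linear term via ``$\interleave\mathbb{G}(t)\interleave_{L^1}=1$ (mass conservation).'' Nothing in the paper establishes an $L^1$ bound for $\mathbb{G}$ itself: Lemmas \ref{lem:3.6}--\ref{lem:3.7} cover only $k=1,2$ derivatives, and the space--frequency weight argument genuinely breaks at $k=0$ (the term $\mathbb{H}''_{\xi}$ is only controlled after multiplication by $|\xi|$ or $|\eta|$). The identity $\|\mathbb{G}(t)\|_{L^1}=1$ is true, but it requires knowing $\mathbb{G}\ge 0$, which for $\alpha<2$ with the Couette advection needs a separate positivity-preservation argument (e.g.\ Trotter product of the transport flow with the $\alpha$-stable semigroup) that is not in the paper's toolbox. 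The paper deliberately sidesteps this: Lemma \ref{lem:4.51} bounds $\|\mathbb{G}(t)\circledast n_0\|_{L^\infty}\le\|\widehat{n_0}\|_{L^1}\le C\|n_0\|_{H^2}$ and $\|\mathbb{G}(t)\circledast n_0\|_{L^2}\le\|n_0\|_{L^2}$ directly on the Fourier side (using Lemma \ref{lem:3.7n}), then interpolates. Either supply the positivity argument or substitute the Fourier-side bound; with that repaired, your proof closes.
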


Before giving the proof of Lemma \ref{lem:4.2}, we list some estimates about the nonlinear term $\mathbf{B}(n)$.

\begin{lemma}\label{lem:4.3}
Under the assumption \eqref{eq:4.3}, for any $3/2\leq  r< \infty$, it holds that
\begin{equation}\label{eq:4.6}
\|\mathbf{B}(n)(\cdot, \cdot, \cdot, t)\|_{L^r}\leq C\delta(1+t)^{-\big(\frac{3}{\alpha}+1\big)\big(\frac{2}{3}-\frac{1}{r}\big)}.
\end{equation}
Moreover for $\widetilde{r}>0$, we have
\begin{equation}\label{eq:4.7}
\|\Lambda^{\widetilde{r}}\mathbf{B}(n)(\cdot, \cdot, \cdot, t)\|_{L^r}\leq C\|\Lambda^{\widetilde{r}} n(\cdot, \cdot, \cdot, t)\|_{L^{\frac{3r}{3+r}}}.
\end{equation}
\end{lemma}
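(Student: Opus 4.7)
The plan is to view $\mathbf{B}(n) = \nabla(-\Delta)^{-1} n$ as (essentially) a Riesz potential of order one in $\mathbb{R}^3$ and to reduce both inequalities to the Hardy--Littlewood--Sobolev (HLS) inequality combined with the a priori decay hypothesis \eqref{eq:4.3}. Since the Fourier symbol of $\nabla(-\Delta)^{-1}$ is $i\xi/|\xi|^2$, the corresponding kernel decays like that of the Riesz potential $I_1$, and HLS in three dimensions yields
$$
\|\mathbf{B}(n)(\cdot,t)\|_{L^r}\le C\,\|n(\cdot,t)\|_{L^{p}},\qquad p=\frac{3r}{3+r},
$$
whenever $1<p<\infty$, i.e.\ $3/2<r<\infty$.

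For the first bound I would then simply substitute \eqref{eq:4.3} at the exponent $q=\tfrac{3r}{3+r}$ and verify the arithmetic identity
$$
1-\frac{1}{q}=1-\frac{3+r}{3r}=\frac{2r-3}{3r}=\frac{2}{3}-\frac{1}{r},
$$
which converts the decay rate $-(\tfrac{3}{\alpha}+1)(1-\tfrac{1}{q})$ of \eqref{eq:4.3} into the claimed $-(\tfrac{3}{\alpha}+1)(\tfrac{2}{3}-\tfrac{1}{r})$. The only delicate point is the endpoint $r=3/2$, where HLS degenerates because the dual exponent is $p=1$; I would handle it by interpolating the $L^{3/2}$ bound between $L^{3/2-\varepsilon}$ and $L^{3/2+\varepsilon}$ bounds obtained from HLS at nearby exponents, using the conservation of $L^1$ mass and \eqref{eq:4.3} to keep both endpoint constants uniform in $t$.

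For the second bound, $\Lambda^{\widetilde r}$ and $\nabla(-\Delta)^{-1}$ are both Fourier multipliers and hence commute, so
$$
\Lambda^{\widetilde r}\mathbf{B}(n) \;=\; \nabla(-\Delta)^{-1}\Lambda^{\widetilde r} n.
$$
A second application of HLS with $\Lambda^{\widetilde r} n$ replacing $n$ then gives
$$
\|\Lambda^{\widetilde r}\mathbf{B}(n)(\cdot,t)\|_{L^r}\le C\,\|\Lambda^{\widetilde r} n(\cdot,t)\|_{L^{\frac{3r}{3+r}}},
$$
as claimed; note that no time-decay information enters this step, since the conclusion is a purely elliptic inequality between $\Lambda^{\widetilde r}\mathbf{B}(n)$ and $\Lambda^{\widetilde r} n$.

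The main, and essentially only, obstacle is the endpoint $r=3/2$ in the first inequality; away from it both estimates are immediate consequences of HLS, and the second one only requires additionally that fractional derivatives commute through the Riesz-transform-type operator $\nabla(-\Delta)^{-1}$.
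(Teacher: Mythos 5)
For $3/2<r<\infty$ your argument is exactly the paper's: write $\mathbf{B}(n)=\widetilde K(D)n$ with symbol homogeneous of degree $-1$, apply Hardy--Littlewood--Sobolev to get $\|\mathbf{B}(n)\|_{L^r}\le C\|n\|_{L^{3r/(3+r)}}$, and insert \eqref{eq:4.3}; the exponent arithmetic $1-\tfrac{3+r}{3r}=\tfrac23-\tfrac1r$ is right. Your treatment of \eqref{eq:4.7} (commuting the Fourier multipliers $\Lambda^{\widetilde r}$ and $\nabla(-\Delta)^{-1}$, then HLS again) is likewise the paper's argument, phrased there as $\Lambda^{\widetilde r}\mathbf{B}(n)=\bigl(\Lambda^{\widetilde r}\widetilde K(D)\Lambda^{-\widetilde r}\bigr)\Lambda^{\widetilde r}n$.

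The genuine gap is your endpoint patch at $r=3/2$. Interpolating between $L^{3/2-\varepsilon}$ and $L^{3/2+\varepsilon}$ cannot work, because the lower bound is not obtainable: for $r<3/2$ the HLS source exponent $3r/(3+r)$ drops below $1$, and neither HLS nor Young's inequality applied to the kernel $x/|x|^3$ produces any $L^{r}$ control of $\mathbf{B}(n)$ with $r\le 3/2$ from $n\in L^1\cap L^\infty$. Indeed no argument can close this endpoint as stated: since $n\ge 0$ with conserved positive mass $M$, one has $\mathbf{B}(n)(x)\sim -\tfrac{M}{4\pi}\,x|x|^{-3}$ as $|x|\to\infty$, and $\int_{|x|>1}|x|^{-3}\,dx$ diverges logarithmically, so $\mathbf{B}(n)\notin L^{3/2}(\mathbb{R}^3)$ for, say, compactly supported nonnegative data. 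Thus \eqref{eq:4.6} is only provable for $3/2<r<\infty$ (the paper's own one-line invocation of HLS silently has the same restriction), and wherever $\|\mathbf{B}(n)\|_{L^{3/2}}$ is later invoked one must instead work at an exponent strictly above $3/2$ and adjust the dual H\"older exponent accordingly. You correctly identified the endpoint as the delicate point, but the proposed repair is not viable.
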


\begin{proof}
We rewrite $\mathbf{B}(n)$ as
$$
\mathbf{B}(n)=\nabla\big((-\Delta)^{-1} n\big)=\widetilde{K}(D) n.
$$
The symbol of $\widetilde{K}(D)$ is a homogeneous function of degree $-1$. According to the Hardy-Littlewood-Sobolev theorem, the operator $\widetilde{K}(D)$ satisfies
$$
\|\mathbf{B}(n)(\cdot, \cdot, \cdot, t)\|_{L^r}\leq C\|\big(\Lambda\widetilde{K}(D)\big)n(\cdot, \cdot, \cdot, t)\|_{L^{\frac{3r}{3+r}}}.
$$
Here, $\Lambda\widetilde{K}(D)$ is a zero-order operator and it is a bounded operator in $L^{\frac{3r}{3+r}}(\mathbb{R}^3)$. Thus one could combine the estimates of $\|n(\cdot, \cdot, \cdot, t)\|_{L^q}$ to prove \eqref{eq:4.6}. For (\ref{eq:4.7}), it could be proved by the fact that
$$
\Lambda^{\widetilde{r}}\mathbf{B}(n)=\left(\Lambda^{\widetilde{r}}\widetilde{K}(D)\Lambda^{-\widetilde{r}}\right)\Lambda^{\widetilde{r}} n,
$$
where the symbol of $\Lambda^{\widetilde{r}}\widetilde{K}(D)\Lambda^{-\widetilde{r}}$  is also a homogeneous function of degree $-1$ and a similar argument in the proof of (\ref{eq:4.6}).
\end{proof}

The Duhamel principle and the Green's function result in the following representation of the solution $n(x,y,z,t)$ of \eqref{eq:1.1}:
\begin{equation}\label{eq:4.8}
\begin{aligned}
n(x, y, z, t)=&\int_{\mathbb{R}^3}\mathbb{G}(x, y, z, t; x',y',z')n_{0}(x', y', z')\, dx'\, dy'\, dz'\\
&+\int_{0}^{t}\int_{\mathbb{R}^3}\mathbb{G}(x, y, z, t-s; x',y',z')\nabla_{x', y', z'}\cdot\big(n \mathbf{B}(n)\big)(x', y', z', s)\, dx'\, dy'\, dz'\, ds\\
\equiv & \mathbb{G}(t)\circledast n_0+\int_0^t \mathbb{G}(t-s)\circledast \nabla_{x', y', z'}\cdot\left(n \mathbf{B}(n)\right)(s)\, ds.
\end{aligned}
\end{equation}

The equation \eqref{eq:4.8} shows that the solution $n(x, y, z, t)$ includes two parts: initial propagation and the nonlinear term. In order to get the $L^p\ (2\leq p<\infty)$ estimates of the solution $n(x, y, z, t)$, we need to obtain them respectively. Lemma \ref{lem:3.7n} is very important to obtain the boundedness of the initial propagation: Based on the Lemma \ref{lem:3.7n}, we give the estimate of the boundedness about the initial data term.

\begin{lemma}\label{lem:4.51}
For the initial data term $\mathbb{G}(t)\circledast n_{0}$ and any $p\in [2, \infty)$, we have
\begin{equation*}
\|\mathbb{G}(t)\circledast n_{0}\|_{L^p}\leq C.
\end{equation*}
\end{lemma}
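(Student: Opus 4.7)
The plan is to work on the Fourier side, controlling $h_t := \mathbb{G}(t)\circledast n_0$ in $L^2$ and $L^\infty$ uniformly in $t$, and then interpolating. First I would identify $\widehat{h_t}$ explicitly. From \eqref{eq:3.9} one has $\mathbb{G}(x,y,z,t;x',y',z') = F(x-x'-Aty',\, y-y',\, z-z',\, t)$ with $\widehat{F} = \widehat{\mathbb{G}}_2$. The shear change of variable $\tilde{x}' = x' + Aty'$ rewrites $\circledast$ as an ordinary convolution between $F(\cdot,t)$ and the sheared datum $n_0(\tilde{x}' - Aty',\, y',\, z')$, and a direct Fourier computation yields
$$
\widehat{h_t}(\xi,\eta,\zeta) = \widehat{\mathbb{G}}_2(\xi,\eta,\zeta,t)\,\widehat{n_0}(\xi,\, \eta+At\xi,\, \zeta).
$$
Since $|\widehat{\mathbb{G}}_2|\le 1$ by Lemma \ref{lem:3.1} and the shear $(\xi,\eta,\zeta)\mapsto(\xi,\eta+At\xi,\zeta)$ is measure-preserving, Plancherel gives $\|h_t\|_{L^2}\le \|\widehat{n_0}(\xi,\eta+At\xi,\zeta)\|_{L^2} = \|n_0\|_{L^2}$, and the latter is uniformly bounded by interpolation between $\|n_0\|_{L^1}$ and $\|n_0\|_{L^p}$.

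For the $L^\infty$ bound I would invoke Fourier inversion, $\|h_t\|_{L^\infty}\le C\|\widehat{h_t}\|_{L^1}$. The same pointwise bound $|\widehat{\mathbb{G}}_2|\le 1$ together with the shear change of variable gives $\|\widehat{h_t}\|_{L^1}\le \|\widehat{n_0}\|_{L^1}$. This is the crucial place where Lemma \ref{lem:3.7n} enters: by the inversion symmetry of the Fourier transform, the very same proof produces the dual inequality $\|\widehat{n_0}\|_{L^1}\le C\|n_0\|_{H^2}$. It remains to verify $\|n_0\|_{H^2}<\infty$ under the hypothesis $n_0\in W^{3,p}\cap L^1$: the zeroth-order part is handled by the interpolation above, and $\|\nabla^2 n_0\|_{L^2}$ follows from a standard Gagliardo-Nirenberg interpolation between $\|n_0\|_{L^1}$ and $\|\nabla^3 n_0\|_{L^p}$, both finite by assumption. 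Thus $\|h_t\|_{L^\infty}\le C$ uniformly in $t$.

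Combining the two estimates by the elementary interpolation $\|h_t\|_{L^p}\le \|h_t\|_{L^2}^{2/p}\|h_t\|_{L^\infty}^{1-2/p}$ yields the conclusion for every $p\in[2,\infty)$. The main technical point I expect to be most delicate is the dual use of Lemma \ref{lem:3.7n}, i.e.\ bounding $\|\widehat{n_0}\|_{L^1}$ by a Sobolev norm of $n_0$ rather than the other way around; this must be justified by re-running the Cauchy-Schwarz argument in the proof of Lemma \ref{lem:3.7n} with the roles of $f$ and $\widehat{f}$ swapped. Once this is in hand, nothing in the argument picks up a singular factor as $t\to 0^{+}$ because $|\widehat{\mathbb{G}}_2|\le 1$ for every $t\ge 0$ and all the $t$-dependence is absorbed into the measure-preserving shear.
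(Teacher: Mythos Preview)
Your argument is essentially the paper's: both compute
\[
\widehat{h_t}(\xi,\eta,\zeta)=\widehat{\mathbb{G}}_2(\xi,\eta,\zeta,t)\,\widehat{n_0}(\xi,\eta+At\xi,\zeta)
\]
(the paper obtains this by solving the linear equation \eqref{eq:4.8n} in Fourier space, you via the convolution structure in \eqref{eq:3.9}), use $|\widehat{\mathbb{G}}_2|\le 1$ together with the measure-preserving shear to get $\|h_t\|_{L^2}\le\|n_0\|_{L^2}$ and $\|h_t\|_{L^\infty}\le C\|\widehat{n_0}\|_{L^1}$, invoke the dual form of Lemma~\ref{lem:3.7n} for $\|\widehat{n_0}\|_{L^1}\le C\|n_0\|_{H^2}$, and then interpolate.

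There is one genuine technical gap, shared with the paper but made explicit in your write-up: the Gagliardo--Nirenberg interpolation $\|\nabla^2 n_0\|_{L^2}\le C\|n_0\|_{L^1}^{1-\theta}\|\nabla^3 n_0\|_{L^p}^\theta$ forces $\theta=\tfrac{7p}{6(2p-1)}$ by scaling, and the admissibility constraint $\theta\ge j/m=2/3$ then requires $p\le 4$. For $p>4$ the inequality actually fails: a sum of disjoint rescaled bumps $\sum_k k^{-\alpha}\psi\big(k^{\beta}(x-x_k)\big)$ with suitably chosen $\alpha,\beta>0$ lies in $W^{3,p}(\mathbb{R}^3)\cap L^1(\mathbb{R}^3)$ but has $\nabla^2$ not in $L^2$, so the embedding $W^{3,p}\cap L^1\hookrightarrow H^2$ is false in general. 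The paper simply writes $\|n_0\|_{H^2}\le C$ without comment and so has the same issue. A repair that bypasses $H^2$ entirely: since $F=\mathbb{G}_2$ in \eqref{eq:3.9} is a probability density (it is a limit of convolutions of stable densities, and $\widehat{\mathbb{G}}_2(0)=1$), one gets $\|h_t\|_{L^\infty}\le\|n_0\|_{L^\infty}$ directly from $h_t=F(\cdot,t)\ast n_0(\cdot-At y',\cdot,\cdot)$, and $\|n_0\|_{L^\infty}<\infty$ by the Sobolev embedding $W^{3,p}(\mathbb{R}^3)\hookrightarrow L^\infty$ valid for every $p>1$.
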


\begin{proof}
Let us denote $u(x,y,z,t)=\mathbb{G}(t)\circledast n_{0}=\int_{\mathbb{R}^3} \mathbb{G}(x, y, z, t; x', y', z')n_{0}(x',y',z')\, dx'\, dy'\, dz'$. Then we know that $u(x, y, z, t)$ satisfy the following equation:
\begin{equation}\label{eq:4.8n}
\begin{cases}
\partial_{t}\hat{u}-A\xi\partial_{\eta}\hat{u}+(\xi^2+\eta^2+\zeta^2)^{\alpha/2}\hat{u}=0,\\
\hat{u}(\xi, \eta,\zeta,0)=\hat{n}_{0}(\xi, \eta,\zeta).
\end{cases}
\end{equation}
The equation (\ref{eq:4.8n}) can be represented as
\begin{equation*}
\hat{u}(\xi, \eta,\zeta,t)=\exp\Bigg\{-\int_{0}^{t}\big(\xi^2+(\eta+As \xi)^2+\zeta^2\big)^{\alpha/2}\, ds\Bigg\}\hat{n}_{0}(\xi, \eta+At \xi, \zeta).
\end{equation*}
Using the definition of Fourier transform, we have
\begin{equation}\label{eq:4.9n}
\begin{split}
|u(x,y,z,t)|=&\Bigg|\int_{\mathbb{R}^3}e^{i(x\xi+y\eta+z\zeta)}\hat{u}(\xi,\eta,\zeta,t)\, d\xi d\eta d\zeta\Bigg|\\
\leq&C\int_{\mathbb{R}^3}|\hat{u}(\xi,\eta,\zeta,t)|\, d\xi d\eta d\zeta\leq C\int_{\mathbb{R}^3}|\hat{n}_{0}(\xi,\eta+At \xi,\zeta)|\, d\xi d\eta d\zeta.
\end{split}
\end{equation}
As $At \xi $ is a shift term, we have
\begin{equation*}
\int_{\mathbb{R}^3}|\hat{n}_{0}(\xi,\eta+At \xi, \zeta)|\, d\xi d\eta d\zeta=\int_{\mathbb{R}^3}|\hat{n}_{0}(\xi,\eta,\zeta)|\, d\xi d\eta d\zeta.
\end{equation*}
Combining the equation (\ref{eq:4.9n}) and Lemma \ref{lem:3.7n}, we have
\begin{equation*}
\|u(\cdot, \cdot, \cdot, t)\|_{L^{\infty}}\leq C\|\hat{n}_{0}(\cdot, \cdot, \cdot, t)\|_{L^1}\leq C\|n_{0}(\cdot, \cdot, \cdot, t)\|_{H^2}\leq C.
\end{equation*}
For the $\|u(\cdot, \cdot, \cdot, t)\|_{L^2}$, using Parseval formula and the inequality \eqref{eq:4.9n}, we have
\begin{equation*}
\|u(\cdot, \cdot, \cdot, t)\|_{L^2}\leq\|\hat{u}(\cdot, \cdot, \cdot, t)\|_{L^2}\leq\|\hat{n}_{0}(\cdot, \cdot+At\cdot, \cdot)\|_{L^2}\leq\|\hat{n}_{0}(\cdot, \cdot, \cdot)\|_{L^2}\leq\|n_{0}(\cdot, \cdot, \cdot)\|_{L^2}\leq C.
\end{equation*}
Then using the interpolation inequality, for any $p\in[2,\infty)$, we have
\begin{equation*}
\|u(\cdot, \cdot, \cdot, t)\|_{L^p}\leq C.
\end{equation*}
\end{proof}

Now we give the proof of Lemma \ref{lem:4.2}.

\begin{proof}[Proof of Lemma \ref{lem:4.2}.]
First we consider the initial propagation. For $2\le p<\infty$, Young's inequality in Lemma \ref{lem:2.1} and the $L^p$ estimates for Green's function in Proposition \ref{prop:3.8} result in that
\begin{equation}\label{eq:4.9}
\left\|\mathbb{G}(t)\circledast n_0\right\|_{L^p}
\leq C_{1}\interleave\mathbb{G}(t)\interleave_{L^{p}}\left\|n_{0}(\cdot, \cdot, \cdot)\right\|_{L^1}\leq C_{2}\left\|n_{0}(\cdot, \cdot, \cdot)\right\|_{L^1}\cdot
t^{-\frac{3}{\alpha}\big(1-\frac 1p\big)}(1+At)^{-1-\frac 1p}.
\end{equation}
Here the constant $C_{2}$ is determined by Proposition \ref{prop:3.8} and the notation $\interleave\mathbb{G}(t)\interleave_{L^{p}}$ is defined by \eqref{eq:3.33}. On the other hand, by Lemma \ref{lem:4.51}, we have
\begin{equation}\label{eq:4.10}
\left\|\mathbb{G}(t)\circledast n_0\right\|_{L^p}\leq C.
\end{equation}
If $A>1$ and $\delta$ is given by \eqref{eq:4.4}, combining \eqref{eq:4.9} and \eqref{eq:4.10}, one has that
\begin{equation}\label{eq:4.11}
\left\|\mathbb{G}(t)\circledast n_0\right\|_{L^p}\leq\frac{\delta}{10}(1+t)^{-\big(\frac{3}{\alpha}+1\big)\big(1-\frac{1}{p}\big)}\ \ \ \ \text{ for }2\le p<\infty.
\end{equation}

Then we consider the nonlinear term. We deal with it in different time scales.

\vskip .05in

\textbf{Case 1:} $t\ge 1$. Then $t\ge 2\cdot A^{-\theta}$ for $A\gg 1$ and any positive constant $\theta$ which implies $t-A^{-\theta}\ge t/2$. If $2\leq p<\infty$, by integrating by part, H\"older's inequality, Proposition \ref{prop:3.8}, Lemma \ref{lem:4.3} and the assumption \eqref{eq:4.2}, we have
\begin{equation}\label{eq:4.12}
\begin{aligned}
&\left\|\int_{t-A^{-\theta}}^t\mathbb{G}(t-s)\circledast\nabla_{x', y', z'}\cdot\big(n \mathbf{B}(n)\big)(s)\, ds\right\|_{L^p}\\
=& \left\|\int_{t-A^{-\theta}}^t \nabla_{x',y',z'}\mathbb{G}(t-s)\circledast \big(n \mathbf{B}(n)\big)(s)\, ds\right\|_{L^p}\\
\leq & C\int_{t-A^{-\theta}}^{t}\interleave\nabla_{x',y',z'}\mathbb{G}(t-s)\interleave_{L^{1}} \|\big(n\mathbf{B}(n)\big)(s)\|_{L^p}\, ds\\
\leq & C\int_{t-A^{-\theta}}^{t}(t-s)^{-1/\alpha}\big\|n(s)\big\|_{L^{2p}}\|\mathbf{B}(n)(s)\|_{L^{2p}}\, ds\\
\leq & C\delta^2(1+t)^{-\big(\frac{3}{\alpha}+1\big)\big(\frac{5}{3}-\frac 1p\big)}\int_{t-A^{-\theta}}^{t}(t-s)^{-1/\alpha}\, ds\\
\leq & C\delta^2 A^{-\frac{(\alpha-1)\theta}{\alpha}}(1+t)^{-\big(\frac{3}{\alpha}+1\big)\big(1-\frac 1p\big)}.
\end{aligned}
\end{equation}
In \eqref{eq:4.12}, since $\alpha>1$, for A sufficiently large, one has that
\begin{equation}\label{eq:4.13}
\left\|\int_{t-A^{-\theta}}^t\mathbb{G}(t-s)\circledast\nabla_{x', y', z'}\big(n\mathbf{B}(n)\big)(s)ds\right\|_{L^p}
\leq\frac{\delta}{10}(1+t)^{-\big(\frac{3}{\alpha}+1\big)\big(1-\frac 1p\big)}.
\end{equation}
In the time interval $[t/2,t-A^{-\theta}]$, taking the $L^2$ norm of the Green's function $\nabla_{x', y', z'}\mathbb{G}$ and combining Proposition \ref{prop:3.8}, Young's inequality for kernel type in Lemma \ref{lem:2.1} and H\"older's inequality, one has that
\begin{equation}\label{eq:4.14}
\begin{aligned}
&\left\|\int^{t-A^{-\theta}}_{t/2}\mathbb{G}(t-s)\circledast\nabla_{x', y', z'}\cdot\big(n \mathbf{B}(n)\big)(s)\, ds\right\|_{L^p}\\
\leq & C\int^{t-A^{-\theta}}_{t/2}\interleave\nabla_{x',y',z'}\mathbb{G}(t-s)\interleave_{L^2}\left\|\big(n \mathbf{B}(n)\big)(s)\right\|_{L^{\frac{2p}{p+2}}}\, ds\\
\leq& C\int_{t/2}^{t-A^{-\theta}}(t-s)^{-5/(2\alpha)}\big(1+A(t-s)\big)^{-1/2}\cdot
\left\|n(s)\right\|_{L^p}\left\|\mathbf{B}(n)(s)\right\|_{L^2}\, ds\\
\leq& C\delta^2(1+t)^{-\big(\frac{3}{\alpha}+1\big)\big(1-\frac 1p\big)}\int_{t/2}^{t-A^{-\theta}}A^{-\frac 12}(t-s)^{-\frac{5}{2\alpha}-\frac{1}{2}}ds\\
\le & C\delta^2 A^{-\frac 12} A^{\big(\frac 5{2\alpha}-\frac 12\big)\theta} (1+t)^{-\big(\frac{3}{\alpha}+1\big)\big(1-\frac 1p\big)}\leq \frac \delta {10} (1+t)^{-\big(\frac{3}{\alpha}+1\big)\big(1-\frac 1p\big)},
\end{aligned}
\end{equation}
for $A$ sufficiently large. Here, we require
\begin{equation}\label{eq:4.15}
0<\theta<\alpha/(5-\alpha).
\end{equation}
to ensure $-1/2+\big(5/(2\alpha)-1/2\big)\theta<0$. In the time interval $[0,t/2]$, it holds that
\begin{equation}\label{eq:4.16}
\begin{aligned}
&\left\|\int_0^{t/2}\mathbb{G}(t-s)\circledast\nabla_{x', y', z'}\cdot\big(n \mathbf{B}(n)\big)(s)\, ds\right\|_{L^p}\\
\leq & C\int_0^{t/2}\interleave\nabla_{x',y',z'}\mathbb{G}(t-s)\interleave_{L^{p}}\left\|\big(n \mathbf{B}(n)\big)(s)\right\|_{L^{1}}\, ds\\
\leq&C\int_0^{t/2}(t-s)^{-\frac{3}{\alpha}\big(1-\frac{1}{p}\big)-\frac{1}{\alpha}}\big(1+A(t-s)\big)^{-1-\frac 1p}
\left\|n(s)\right\|_{L^{2}}\left\|\mathbf{B}(n)(s)\right\|_{L^{2}}\, ds\\
\leq&C\delta^2 A^{-1-\frac 1p}t^{-\big(\frac{3}{\alpha}+1\big)\big(1-\frac{1}{p}\big)-\frac{1}{\alpha}}
\int_0^{t/2}(1+s)^{-\frac 3{2\alpha}-\frac 12}\, ds\\
\leq & C\delta^2 A^{-1-\frac 1p}(1+t)^{-\big(\frac{3}{\alpha}+1\big)\big(1-\frac 1p\big)}\le\frac\delta {10} (1+t)^{-\big(\frac{3}{\alpha}+1\big)\big(1-\frac{1}{p}\big)}
\end{aligned}
\end{equation}
since $A\gg 1$, $t\ge 1$ and $-3/(2\alpha)-1/2\leq -1$ for $\alpha\in (1,2]$.

\vskip .05in

\textbf{Case 2:} $2 A^{-\theta}<t<1$ for a positive constant $\theta$ satisfying \eqref{eq:4.15}. It holds that
\begin{equation}\label{eq:4.17}
\begin{split}
&\left\|\int_{t/2}^t\mathbb{G}(t-s)\circledast\nabla_{x', y', z'}\cdot\big(n \mathbf{B}(n)\big)(s)\, ds\right\|_{L^p}\\
\leq & C\int_{t/2}^{t}\interleave\nabla_{x', y', z'}\mathbb{G}(t-s)\interleave_{L^{\frac{7+\alpha}{8}}}\|n(s)\|_{L^{p}}\|\mathbf{B}(s)\|_{L^{\frac{7+\alpha}{\alpha-1}}}\, ds\\
\leq & C\delta^2\int_{t/2}^{t}(t-s)^{-\frac{3}{\alpha}\big(1-\frac{8}{7+\alpha}\big)-\frac{1}{\alpha}}\big(1+A(t-s)\big)^{-\big(1-\frac{8}{7+\alpha}\big)}(1+s)^{-\big(\frac{3}{\alpha}+1\big)\big(1-\frac{1}{p}\big)}\, ds\\
\leq & C\delta^2A^{-\frac{\alpha-1}{7+\alpha}}(1+t)^{-\big(\frac{3}{\alpha}+1\big)\big(1-\frac 1p\big)}\int_{t/2}^{t}(t-s)^{{-\frac{3}{\alpha}\big(1-\frac{8}{7+\alpha}\big)-\frac{1}{\alpha}}}\, ds\leq\frac{\delta}{10}(1+t)^{-\big(\frac{3}{\alpha}+1\big)\big(1-\frac 1p\big)}.
\end{split}
\end{equation}
We split the remaining time interval $[0,t/2]$ into $[0,A^{-\theta}]$ and $[A^{-\theta},t/2]$. Similar to \eqref{eq:4.12}, one has that
\begin{equation}\label{eq:4.18}
\begin{aligned}
&\left\|\int_0^{A^{-\theta}}\nabla_{x',y',z'}\mathbb{G}(t-s)\circledast \big(n \mathbf{B}(n)\big)(s)ds\right\|_{L^p}\\
\leq & C\int_{0}^{A^{-\theta}} (t-s)^{-1/\alpha}\cdot(\|n(s)\|_{L^{2p}}\|\mathbf{B}(n) (s)\|_{L^{2p}})\, ds\\
\leq& C\delta^2A^{-\theta\big(1-\frac 1\alpha\big)}\leq\frac{\delta}{10}(1+t)^{-\big(\frac{3}{\alpha}+1\big)\big(1-\frac{1}{p}\big)}.
\end{aligned}
\end{equation}

In the time interval $[A^{-\theta},t/2]$, similar to \eqref{eq:4.14}, it holds that
\begin{equation}\label{eq:4.19}
\begin{aligned}
&\left\|\int_{A^{-\theta}}^{t/2}\mathbb{G}(t-s)\circledast\nabla_{x', y', z'}\cdot\big(n \mathbf{B}(n)\big)(s)\, ds\right\|_{L^p}\\
\leq & C\int_{A^{-\theta}}^{t/2}\interleave\nabla_{x',y',z'}\mathbb{G}(t-s)\interleave_{L^2}\left\|\big(n \mathbf{B}(n)\big)(s)\right\|_{L^{\frac{2p}{p+2}}}\, ds\\
\leq & C\delta^2\int^{t/2}_{A^{-\theta}}(t-s)^{-\frac{3}{2\alpha}-\frac{1}{\alpha}}\big(1+A(t-s)\big)^{-\frac{1}{2}}\, ds\\
\leq & C\delta^2\int^{t/2}_{A^{-\theta}}A^{-\frac 12}(t-s)^{-\frac{5}{2\alpha}-\frac{1}{2}}\, ds\leq\frac \delta {10}
\end{aligned}
\end{equation}
since $A\gg1$ and $-1/2+\left(5/(2\alpha)-1/2\right)\theta<0$ ensured by \eqref{eq:4.15}. Here, one also uses the fact that $-5/(2\alpha)+1/2<0$ for $\alpha\in(1,2]$.

\vskip .05in

\textbf{Case 3:} $t\leq 2\cdot A^{-\theta}\ll 1$. Similar to \eqref{eq:4.14} and \eqref{eq:4.18}, it holds that
\begin{equation}\label{eq:4.20}
\begin{aligned}
&\left\|\int_{t/2}^t \nabla_{x',y',z'}\mathbb{G}(t-s)\circledast\big(n \mathbf{B}(n)\big)(s) ds\right\|_{L^p}\\
\leq&C\int_{t/2}^{t}(t-s)^{-\frac{1}{\alpha}}\cdot\|n (s)\|_{L^{2p}}\|\mathbf{B}(n) (s)\|_{L^{2p}}\, ds\\
\leq& C\delta^2 A^{-\big(1-\frac 1\alpha\big)\theta}\leq\frac \delta {10},
\end{aligned}
\end{equation}
since $A\gg 1$ and $1-1/\alpha>0$ for $\alpha>1$. In the time interval $[0,t/2]$,  one has that
\begin{equation}\label{eq:4.21}
\begin{aligned}
&\left\|\int_0^{t/2}\nabla_{x',y',z'}\mathbb{G}(t-s)\circledast\big(n \mathbf{B}(n)\big)(s)ds\right\|_{L^p}\\
\leq&C\int_{0}^{t/2} (t-s)^{-\frac 1\alpha}\cdot\|n (s)\|_{L^{2p}}\|\mathbf{B}(n) (s)\|_{L^{2p}}\, ds\\
\leq & C\delta^2 A^{-\big(1-\frac{1}{\alpha}\big)\theta}\leq\frac{\delta}{10},
\end{aligned}
\end{equation}
for $A\gg 1$ since $1-1/\alpha>0$ for $\alpha>1$.

\vskip .05in

Combining \eqref{eq:4.8} and the estimates \eqref{eq:4.13}-\eqref{eq:4.21}, we verify \eqref{eq:4.5} and finish the proof of Lemma \ref{lem:4.2}.
\end{proof}

We have obtained the regularity criterion and the decaying rates in $L^p$ norms. Next, we go on with the estimates of derivatives under \eqref{eq:4.2}. In the following discussion, the constants
$\delta$ and $A$ which are very important for the proof can be deal with the usual constant.

\vskip .05in

Denote $\Lambda(D)$ the first-order quasi-differential operator with the symbol $\sigma\left(\Lambda(D)\right)=|\Xi|$. Then $\sigma(\Lambda^{k\gamma}(D))=|\Xi|^{k\gamma}$ .In our case, as the variable $y$ is different with the variables $x$ and $z$, we also denote $\widetilde{\Lambda}^{k\gamma}=\sum_{k_{1}+k_{2}+k_{3}=k}\Lambda^{k_{1}\gamma}\Lambda^{k_{2}\gamma}\Lambda^{k_{3}\gamma}$ and $k, k_{1}, k_{2}, k_{3}\in \mathbb{N}^{+}$. It is easy to know that the operator $\Lambda^{k\gamma}$ is equal to $\widetilde{\Lambda}^{k\gamma}$. Using the inequality \eqref{eq:3.32}, we know that
\begin{equation*}
\begin{split}
\interleave\Lambda_{x', y', z'}^{k\gamma}\mathbb{G}(t)\interleave_{L^p}=\interleave\Lambda_{x'}^{k_{1}\gamma}\Lambda_{y'}^{k_{2}\gamma}\Lambda_{z'}^{k_{3}\gamma}\mathbb{G}(t)\interleave_{L^p}\leq & \interleave\Lambda_{x'}^{k_{1}\gamma}\big(\Lambda_{y'}^{k_{2}\gamma}+(At)^{k_{2}\gamma}\Lambda_{x'}^{k_{2}\gamma}\big)\Lambda_{z'}^{k_{3}\gamma}\mathbb{G}(t)\interleave_{L^p}\\
\leq & \interleave\Lambda_{x'}^{k_{1}\gamma}\Lambda_{y'}^{k_{2}\gamma}\Lambda_{z'}^{k_{3}\gamma}\mathbb{G}(t)\interleave_{L^p}+(At)^{k_{2}\gamma}\interleave\Lambda_{x'}^{(k_{1}+k_{2})\gamma}\Lambda_{z'}^{k_{3}\gamma}\mathbb{G}(t)\interleave_{L^p}\\
\leq & Ct^{-\frac{3}{\alpha}\big(1-\frac{1}{p}\big)}(1+At)^{-k_{1}\gamma-\big(1-\frac{1}{p}\big)}.
\end{split}
\end{equation*}

{From the above inequality, we know that if we change the variable from $x, y, z$ to $x', y', z'$, it does not impact the decay rate.}

\begin{lemma}\label{lem:4.5}
With the initial function $n_{0}(x,y,z)\in W^{3, p}(\mathbb{R}^3)\cap L^1(\mathbb{R}^3)$ and for
\begin{equation}\label{eq:4.22}
0<\gamma< \alpha-1, \ \ \ \ k\gamma\le 3,\ k\in\mathbb{N}^+,\end{equation}
one has that
$$
\left\|\Lambda^{k\gamma}\left(\mathbb{G}(t)\circledast n_{0}\right)\right\|_{L^{p}}\leq C(1+t)^{-\big(\frac{3}{\alpha}+1\big)\big(1-\frac 1p\big)-\frac{k\gamma}{\alpha}}\ \ \ \ \text{ for } 2\le p<\infty.
$$
\end{lemma}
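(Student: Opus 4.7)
My plan is to split the time interval into $t\ge 1$ and $0<t\le 1$, treating each regime with a different tool from the preceding sections.

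For $t\ge 1$, I would commute $\Lambda^{k\gamma}$ past the $(x',y',z')$-integration defining $\mathbb{G}(t)\circledast n_0$ (it is a Fourier multiplier acting only in $(x,y,z)$ and therefore passes through the integral), and then apply Young's inequality of kernel type (Lemma \ref{lem:2.1}). With $n_0\in L^1$ this gives
\[
\|\Lambda^{k\gamma}(\mathbb{G}(t)\circledast n_0)\|_{L^p}\le\interleave\Lambda^{k\gamma}\mathbb{G}(t)\interleave_{L^p}\|n_0\|_{L^1}.
\]
The kernel norm is controlled by \eqref{eq:3.36} of Proposition \ref{prop:3.8} with $\widetilde\beta=k\gamma\ge 0$ (valid for $p\ge 2$), giving $Ct^{-(3/\alpha)(1-1/p)-k\gamma/\alpha}(1+At)^{-(1-1/p)}$; for $t\ge 1$ and $A\gg 1$ this is bounded by $C(1+t)^{-(3/\alpha+1)(1-1/p)-k\gamma/\alpha}$, since $t$ is comparable to $1+t$ and $(1+At)^{-(1-1/p)}\le (1+t)^{-(1-1/p)}$.

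For $0<t\le 1$, the target decay is bounded between two positive constants, so it suffices to prove a uniform bound $\|\Lambda^{k\gamma}(\mathbb{G}(t)\circledast n_0)\|_{L^p}\le C$. I would transfer derivatives onto $n_0$ via the identity \eqref{eq:3.32}. Since $\mathbb{G}\ge 0$ is a mass-preserving kernel (both the shear advection and the fractional Laplacian preserve mass), $\interleave\mathbb{G}(t)\interleave_{L^1}=1$ and Lemma \ref{lem:2.1} with $q=1$ gives the $L^p$-contraction $\|\mathbb{G}(t)\circledast f\|_{L^p}\le\|f\|_{L^p}$. Integration by parts together with \eqref{eq:3.32} yields
\[
\partial_y\bigl(\mathbb{G}(t)\circledast f\bigr)=\mathbb{G}(t)\circledast\bigl(\partial_{y'}f-At\partial_{x'}f\bigr),
\]
while $\partial_x$ and $\partial_z$ pass directly to $\partial_{x'},\partial_{z'}$. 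Iterating, for every integer multi-index $(k_1,k_2,k_3)$ with $k_1+k_2+k_3\le 3$,
\[
\|\partial_x^{k_1}\partial_y^{k_2}\partial_z^{k_3}(\mathbb{G}(t)\circledast n_0)\|_{L^p}\le C(1+At)^{k_2}\|n_0\|_{W^{k_1+k_2+k_3,p}}\le C
\]
for $t\le 1$. In particular $\|\mathbb{G}(t)\circledast n_0\|_{L^p}$ and $\|\nabla^3(\mathbb{G}(t)\circledast n_0)\|_{L^p}$ are bounded, and the fractional order $k\gamma\in(0,3]$ is recovered by the Gagliardo--Nirenberg interpolation
\[
\|\Lambda^{k\gamma}f\|_{L^p}\le C\|f\|_{L^p}^{1-k\gamma/3}\|\nabla^3 f\|_{L^p}^{k\gamma/3}.
\]

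The main obstacle is bridging the two regimes: the large-time bound extracts decay from $\|n_0\|_{L^1}$ together with the dissipation-enhancement factor $(1+At)^{-(1-1/p)}$ built into the Green's function, while the short-time bound must instead use $\|n_0\|_{W^{3,p}}$ to absorb the $(1+At)^{k_2}$ growth produced when $\partial_y$ meets the shear coordinate $Aty'$ inside $\mathbb{G}$. The hypothesis $k\gamma\le 3$ is exactly what $W^{3,p}$ can accommodate through the interpolation step; the overall constant is allowed to depend on $A,\alpha,n_0$, and the strict bound $\gamma<\alpha-1$ is not invoked here but will be required when this lemma is later composed with the attractive kernel estimates of Lemma \ref{lem:4.3}.
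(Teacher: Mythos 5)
Your proposal is correct and follows essentially the same route as the paper: for large $t$ you use Young's inequality (Lemma \ref{lem:2.1}) with $\interleave\Lambda^{k\gamma}\mathbb{G}(t)\interleave_{L^p}$ from Proposition \ref{prop:3.8} against $\|n_0\|_{L^1}$, and for small $t$ you transfer the derivatives onto $n_0$ and invoke $n_0\in W^{3,p}$, exactly as in \eqref{eq:4.23}--\eqref{eq:4.24}. Your small-time step is in fact slightly more careful than the paper's (which simply commutes $\Lambda^{k\gamma}$ through the sheared kernel): passing integer derivatives via \eqref{eq:3.32} and recovering the fractional order by Gagliardo--Nirenberg avoids the issue that $\Lambda^{k\gamma}_{x,y,z}$ does not literally commute with $\mathbb{G}(t)\circledast$ because of the shear, at the harmless cost of an $A$-dependent constant; both arguments rely on the $k=0$, $q=1$ kernel bound $\interleave\mathbb{G}(t)\interleave_{L^1}\le C$, which you justify by positivity and mass conservation.
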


\begin{proof}
From Lemma \ref{lem:2.1}, for any $k\ge 0$ one has that
\begin{equation}\label{eq:4.23}
\left\|\Lambda^{k\gamma}(\mathbb{G}(t)\circledast n_{0})\right\|_{L^{p}}\leq C\interleave\Lambda^{^{k\gamma}}\mathbb{G}(t)\interleave_{L^{p}}\|n_{0}\|_{L^1}\leq Ct^{-\big(\frac{3}{\alpha}+1\big)\big(1-\frac 1p\big)-\frac{k\gamma}{\alpha}}.
\end{equation}
Here the estimates for $\interleave\Lambda^{k\gamma}\mathbb{G}(t)\interleave_{L^{p}}$ could be obtained by Proposition \ref{prop:3.8}. Moreover, if $k>0$ , $\Lambda^{^{k\gamma}}$ is a pseudo-differential operator and  by Proposition \ref{prop:3.8}, one has that
\begin{equation}\label{eq:4.24}
\left\|\Lambda^{k\gamma}\left(\mathbb{G}(t)\circledast n_{0}\right)\right\|_{L^{p}}\leq
C\left\|\mathbb{G}(t)\circledast\Lambda^{k\gamma}n_{0}\right\|_{L^{p}}
\leq C\left\|\Lambda^{k\gamma}n_{0}\right\|_{L^p}.
\end{equation}
Thus, one could use \eqref{eq:4.24} to obtain the boundedness of this term. Combining the estimates \eqref{eq:4.23} and \eqref{eq:4.24}, we finish the proof of this lemma.
\end{proof}

Lemma \ref{lem:4.5} shows estimates of derivatives of the linear part in $n(x,y,z,t)$ as shown by \eqref{eq:4.8}. It is obvious that limited by the regularity of initial function $n_0$ (i.e. the condition $n_0\in W^{3, p}$), $k$ could not be larger than 6 in Lemma \ref{lem:4.5}. The reason that we consider the half order derivatives in the lemma, is a preparation for the nonlinear part. In fact, due to the singularities in the Green's function $\mathbb{G}$ and the structure of the nonlinearity $\nabla\cdot\left(n \mathbf{B}(n)\right)$, one could only consider half order derivatives step by step.

\begin{lemma}\label{lem:4.6}
Assuming $\gamma, k$ satisfy the condition \eqref{eq:4.22} and
$$
\left\|\Lambda^{j\gamma}n(\cdot, \cdot, \cdot, t)\right\|_{L^{p}}\leq C(1+t)^{-\big(\frac{3}{\alpha}+1\big)\big(1-\frac 1p\big)-\frac{j\gamma}{\alpha}}
$$
for $p\in[2,\infty)$ and $j=0,\cdots,k-1$, then one has that
$$
\|\Lambda^{k\gamma}n(\cdot, \cdot, \cdot, t)\|_{L^{p}}\leq C(1+t)^{-\big(\frac{3}{\alpha}+1\big)\big(1-\frac 1p\big)-\frac{k\gamma}{\alpha}}.
$$
\end{lemma}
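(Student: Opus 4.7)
The plan is to propagate the inductive bound through the Duhamel formula \eqref{eq:4.8} after applying $\Lambda^{k\gamma}$. For the linear term $\Lambda^{k\gamma}(\mathbb{G}(t)\circledast n_0)$, Lemma \ref{lem:4.5} already provides the required decay. For the nonlinear term, the central idea is to move a fractional derivative of order $1+\gamma$ onto the Green's function and distribute the remaining $(k-1)\gamma$ derivatives onto the nonlinearity via the fractional Leibniz estimate of Lemma \ref{lem:2.3}. Concretely, I would write
\begin{equation*}
\Lambda^{k\gamma}\int_0^t\mathbb{G}(t-s)\circledast\nabla_{x',y',z'}\cdot(n\mathbf{B}(n))(s)\,ds=\int_0^t\Lambda_{x',y',z'}^{\gamma}\nabla_{x',y',z'}\mathbb{G}(t-s)\circledast\Lambda^{(k-1)\gamma}(n\mathbf{B}(n))(s)\,ds.
\end{equation*}
The condition $\gamma<\alpha-1$ forces $1+\gamma<\alpha$, so the fractional Green's function estimate \eqref{eq:3006} in Proposition \ref{prop:3.8} delivers a singularity $(t-s)^{-(1+\gamma)/\alpha}$ that is integrable at $s=t$; this is the reason the restriction \eqref{eq:4.22} is imposed.

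I would then bound $\|\Lambda^{(k-1)\gamma}(n\mathbf{B}(n))(s)\|_{L^r}$ by applying Lemma \ref{lem:2.3} to split the fractional derivative between $n$ and $\mathbf{B}(n)$. Every resulting term has the form $\|\Lambda^{j\gamma}n\|_{L^{p_1}}\|\Lambda^{(k-1-j)\gamma}\mathbf{B}(n)\|_{L^{p_2}}$ with $0\le j\le k-1$; the first factor is controlled by the inductive hypothesis, and Lemma \ref{lem:4.3} reduces the second to $\|\Lambda^{(k-1-j)\gamma}n\|_{L^{3p_2/(3+p_2)}}$ through the Riesz-type operator $\widetilde K(D)=\nabla(-\Delta)^{-1}$ commuting with $\Lambda^{(k-1-j)\gamma}$ up to boundedness. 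Collecting exponents with the decay rates from the induction then gives a pointwise-in-time bound for the nonlinearity with the correct power of $(1+s)$.

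With the nonlinearity controlled, the time integral is split as in the proof of Lemma \ref{lem:4.2}. For $t\ge 1$, I would partition $[0,t]$ into $[0,t/2]$, $[t/2,t-A^{-\theta}]$ and $[t-A^{-\theta},t]$, and on each subinterval choose $\interleave\Lambda^{1+\gamma}\mathbb{G}\interleave_{L^q}$ in a different $L^q$: the $L^1$-estimate on $[t-A^{-\theta},t]$ (where the only singularity, now $(t-s)^{-(1+\gamma)/\alpha}$, is integrable), the $L^2$-estimate on the middle interval (where the enhanced dissipation factor $(1+A(t-s))^{-1/2}$ is harvested), and the $L^p$-estimate on $[0,t/2]$ (where the Green's function produces the full target decay while the nonlinearity is paired against an $L^1$ bound). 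For $t<1$ an analogous subdivision around $A^{-\theta}$ is used, and $\theta$ is chosen to satisfy \eqref{eq:4.15}.

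The main obstacle is the same bookkeeping difficulty that arises in Lemma \ref{lem:4.2}: the quadratic structure of $n\mathbf{B}(n)$ forces one to track carefully how the target decay rate $(1+t)^{-(3/\alpha+1)(1-1/p)-k\gamma/\alpha}$ is produced out of the product of two inductive decay rates combined with the Green's function singularities. The essential gain on the near-singularity interval $[t-A^{-\theta},t]$ is $A^{-(\alpha-1-\gamma)\theta/\alpha}$, which is genuinely small precisely because $\gamma<\alpha-1$; together with the enhanced dissipation factor $(1+A(t-s))^{-1/2}$ on the middle interval, this closes the estimate for $A$ sufficiently large. The regularity condition $n_0\in W^{3,p}$ bounds the range of the induction to $k\gamma\le 3$, matching the hypothesis \eqref{eq:4.22}.
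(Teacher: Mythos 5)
Your proposal is correct in its core mechanism and matches the paper's proof there: apply $\Lambda^{k\gamma}$ to the Duhamel formula, invoke Lemma \ref{lem:4.5} for the linear part, and for the nonlinear part shift a derivative of order $1+\gamma<\alpha$ onto the Green's function (so that the singularity $(t-s)^{-(1+\gamma)/\alpha}$ is integrable, via \eqref{eq:3006}) while distributing $\Lambda^{(k-1)\gamma}$ over $n\,\mathbf{B}(n)$ by Lemma \ref{lem:2.3}, Lemma \ref{lem:4.3} and the inductive hypothesis. Where you diverge is in the time decomposition: you import the three-interval, $A$-dependent splitting $[0,t/2]\cup[t/2,t-A^{-\theta}]\cup[t-A^{-\theta},t]$ from Lemma \ref{lem:4.2} and argue that the estimate ``closes for $A$ sufficiently large'' by harvesting $A^{-(\alpha-1-\gamma)\theta/\alpha}$ and the factor $(1+A(t-s))^{-1/2}$. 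The paper instead uses only the crude split $[0,t/2]\cup[t/2,t]$ and, as it remarks just before Lemma \ref{lem:4.5}, treats $\delta$ and $A$ as ordinary constants at this stage. The reason the simpler route suffices is that Lemma \ref{lem:4.6} asserts a bound with an unspecified constant $C$, not a quantitative improvement of a bootstrap hypothesis, so no smallness in $A$ needs to be extracted; on $[0,t/2]$ with $t\ge1$ the paper takes the Green's function in $L^p$ against $\|n\|_{L^3}\|\mathbf{B}(n)\|_{L^{3/2}}$, and on $[t/2,t]$ (and on $[0,t/2]$ for $t<1$) it takes $\Lambda^{1+\gamma}\mathbb{G}$ in $L^1$ against the $L^p$ norm of the differentiated nonlinearity. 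Your extra machinery does no harm and would also yield the claimed decay, but it obscures the fact that the only structural ingredients actually needed here are the integrability condition $\gamma<\alpha-1$ and the induction; the enhanced-dissipation bookkeeping in $A$ is not one of them.
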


\begin{proof}
Apply $\Lambda^{k\gamma}$ on the representation \eqref{eq:4.8} for $n$ to yield that
$$
\begin{aligned}
\Lambda^{k\gamma}n(x, y, z, t)=&\Lambda_{x, y, z}^{k\gamma}\int_{\mathbb{R}^3}\mathbb{G}(x, y, z, t;x', y', z')n_{0}(x', y', z')\, dx'\, dy'\, dz'\\
&+\int_{0}^{t}\int_{\mathbb{R}^3}\Lambda_{x, y, z}^{k\gamma}\mathbb{G}(x, y, z, t-s; x', y', z')\nabla\cdot\big(n \mathbf{B}(n)\big)(x', y', z', s)\, dx'\, dy'\, dz'\, ds,
\end{aligned}
$$
and thus combining Lemma \ref{lem:4.5} one has that
\begin{equation}\label{eq:4.25}
\begin{aligned}
&\left\|\Lambda^{k\gamma}n(\cdot, \cdot, \cdot, t)\right\|_{L^{p}}\\
\leq&\left\|\Lambda_{x, y, z}^{k\gamma}\mathbb{G}(t)\circledast n_{0}\right\|_{L^{p}}+\int_{0}^{{t}}\left\|\Lambda_{x, y, z}^{k\gamma}\Lambda_{x', y', z'}\mathbb{G}(t-s)\circledast\big(n \mathbf{B}(n)\big)(s)\right\|_{L^{p}}\, ds\\
\le & C(1+t)^{-\big(\frac{3}{\alpha}+1\big)\big(1-\frac 1p\big)-\frac {k\gamma}{\alpha}}+\int_{0}^{t/2}\left\|\Lambda_{x, y, z}^{k\gamma}\Lambda_{x', y', z'}\mathbb{G}(t-s)\circledast\big(n \mathbf{B}(n)\big)(s)\right\|_{L^{p}}\, ds\\
&+\int_{t/2}^{t}\left\|\Lambda_{x, y, z}^{k\gamma}\Lambda_{x', y', z'}\mathbb{G}(t-s)\circledast\big(n\mathbf{B}(n)\big)(s)\right\|_{L^{p}}\, ds.
\end{aligned}
\end{equation}
From Proposition \ref{prop:3.8}, Lemma \ref{lem:2.3} and Lemma \ref{lem:4.3}, one has that
\begin{equation*}
\begin{aligned}
&\int_{0}^{t/2}\left\|\Lambda_{x, y, z}^{k\gamma}\Lambda_{x', y', z'}\mathbb{G}(t-s)\circledast\big(n\mathbf{B}(n)\big)(s)\right\|_{L^{p}}\, ds\\
\leq&\int_{0}^{t/2}\left\|\Lambda_{x, y, z}^{k\gamma}\Lambda_{x', y', z'}\mathbb{G}(t-s)\right\|_{L^{p}}\|n(s)\|_{L^3}\|\mathbf{B}(n)(s)\|_{L^{3/2}}\, ds\\
\leq&C\int_{0}^{t/2}(t-s)^{-\frac{3}{\alpha}\big(1-\frac 1p\big)-\frac{k\gamma}{\alpha}}\big(1+A(t-s)\big)^{-1-\frac 1p}(1+s)^{-\big(\frac{3}{\alpha}+1\big)\frac{2}{3}}\, ds\\
\leq&C(1+t)^{-1-\frac 1p}t^{-\frac{3}{\alpha}\big(1-\frac 1p\big)-\frac{k\gamma}{\alpha}}.
\end{aligned}
\end{equation*}
If $t\geq 1$, we have
\begin{equation}\label{eq:4.27}
\int_{0}^{t/2}\|\Lambda_{x, y, z}^{k\gamma}\Lambda_{x', y', z'}\mathbb{G}(t-s)\circledast\big(n\mathbf{B}(n)\big)\|_{L^p}\, ds\leq C(1+t)^{-\big(\frac{3}{\alpha}+1\big)\big(1-\frac{1}{p}\big)-\frac{k\gamma}{\alpha}}.
\end{equation}
If $t<1$, we need the boundedness of this term.
\begin{equation}\label{eq:4.26}
\begin{aligned}
&\int_{0}^{t/2}\left\|\Lambda_{x, y, z}^{k\gamma}\Lambda_{x', y', z'}\mathbb{G}(t-s)\circledast\big(n\mathbf{B}(n)\big)(s)\right\|_{L^p}\, ds\\
\leq &\int_{0}^{t/2}\|\Lambda_{x, y, z}^{\gamma}\Lambda_{x', y', z'}\mathbb{G}(t-s)\|_{L^1} \|\Lambda^{(k-1)\gamma}\big(n \mathbf{B}(n)\big)(s)\|_{L^p}\, ds\\
\leq & \int_{0}^{t/2} (t-s)^{-\frac {1+\gamma}{\alpha}} \cdot \big(\|\Lambda^{(k-1)\gamma}n\|_{L^{2p}}\| \mathbf{B}(n)\|_{L^{2p}}+\|n\|_{L^{2p}}\|\Lambda^{(k-1)\gamma} \mathbf{B}(n)\|_{L^{2p}}\big)\, ds\\
\leq & C\int_{0}^{t/2}(t-s)^{-\frac{1+\gamma}{\alpha}}ds\leq Ct^{\frac{\alpha-1-\gamma}{\alpha}}.
\end{aligned}
\end{equation}
The estimate of \eqref{eq:4.27} and \eqref{eq:4.26} result in that
\begin{equation}\label{eq:4.28}
\int_{0}^{t/2}\left\|\Lambda_{x, y, z}^{k\gamma}\Lambda_{x', y', z'}\mathbb{G}(t-s)\circledast\big(n\mathbf{B}(n)\big)(s)\right\|_{L^{p}}\, ds\leq C(1+t)^{-\big(\frac{3}{\alpha}+1\big)\big(1-\frac 1p\big)-\frac{k\gamma}{\alpha}}
\end{equation}
since $\gamma<\alpha-1$ which ensures $\alpha-1-\gamma> 0$.

\vskip .05in

In the time interval $[t/2,t]$ near $t$,  Proposition \ref{prop:3.8}, Lemma \ref{lem:2.3} and Lemma \ref{lem:4.3}  yield that
\begin{equation}\label{eq:4.29}
\begin{aligned}
&\int^t_{t/2}\left\|\Lambda_{x, y, z}^{k\gamma}\Lambda_{x', y', z'}\mathbb{G}(t-s)\circledast\big(n\mathbf{B}(n)\big)(s)\right\|_{L^p}\, ds\\
\leq &\int^t_{t/2}\|\Lambda_{x, y, z}^{\gamma}\Lambda_{x', y', z'}\mathbb{G}(t-s)\|_{L^1} \|\Lambda^{(k-1)\gamma}\big(n\mathbf{B}(n)\big)(s)\|_{L^p}\, ds\\
\leq & \int^t_{t/2} (t-s)^{-\frac {1+\gamma}{\alpha}}  \cdot \big(\|\Lambda^{(k-1)\gamma}n\|_{L^{2p}}\| \mathbf{B}(n)\|_{L^{2p}}+\|n\|_{L^{2p}}\|\Lambda^{(k-1)\gamma} \mathbf{B}(n)\|_{L^{2p}}\big)\, ds\\
\leq & C\int^t_{t/2}(t-s)^{-\frac{1+\gamma}{\alpha}} (1+s)^{-\big(1+\frac{3}{\alpha}\big)\big(\frac{5}{3}-\frac{1}{p}\big)-\frac{(k-1)\gamma}{\alpha}}\, ds\\
\leq & C(1+t)^{-\big(\frac{3}{\alpha}+1\big)\big(1-\frac 1p\big)-\frac{k\gamma}{\alpha}}.
\end{aligned}
\end{equation}
The last inequality uses $\alpha-1-\gamma>0$ and the following estimates:
$$
\frac{\alpha-1-\gamma}{\alpha}-\Big(\frac{3}{\alpha}+1\Big)\Big(\frac{5}{3}-\frac{1}{p}\Big)-\frac{(k-1)\gamma}{\alpha}\leq -\Big(\frac{3}{\alpha}+1\Big)\Big(1-\frac 1p\Big)-\frac{k\gamma}{\alpha},
$$
We complete the proof by combining \eqref{eq:4.25}, \eqref{eq:4.28} and \eqref{eq:4.29}.
\end{proof}

\begin{proof}[The proof of Theorem\ref{the:1.1}]
Using the initial condition \eqref{2000} and the Theorem \ref{thm:4.1}, we know that there exists a $T_{*}>0$ such that the Cauchy problem (\ref{eq:1.1}) has a local classical solution on $\mathbb{R}^3\times (0, T_{*}]$. Then we use the a priori estimates (\ref{eq:4.2}) to extend the local classical solution to the global solution. As the classical solution exists on $(0, T_{*}]$, we know that $\|n(\cdot, \cdot, \cdot, t)\|_{L^p}\leq C$ exists on $(0, T_{*}]$. Therefore, there exists a $T_{1}\in (0, T_{*}]$ such that $(\ref{eq:4.2})$ holds for $T=T_{1}$. Set
$$
T^{*}=\sup\{T | (\ref{eq:4.2})\ \text {holds}\}.
$$
Then $T^*\geq T_{1}>0$. We want to show that $T^*=\infty$. If $T^*<\infty$, by Lemma \ref{lem:4.2}, we know that (\ref{eq:4.5}) holds on $(0, T^*]$. Then using the blow-up criterion (\ref{eq:4.1}), we know that there exists $T^{**}>T^*$ such that the solution exists on $\mathbb{R}^3\times (0, T^{**}]$ which contradicts with the definition of $T^*$. Then we have $T^*=\infty$. Using the blow-up criterion again, we know that the classical solution exists for any time. For the decay estimates of this solution, it is given in Lemma \ref{lem:4.6}.
\end{proof}

\vskip .15in

\noindent \textbf{Acknowledgement.} S. Deng is supported by National Key R\&D Program of China (No. 2022YFA1007300) and Shanghai Science and Technology Innovation Action Plan (No. 21JC1403600). B. Shi is supported by  the Natural Science Foundation of Jiangsu Province (No. BK20241432). W. Wang is supported by National Nature Science Foundation of China (No. 12271357) and Shanghai Science and Technology Innovation Action Plan (No. 21JC1403600). Y. Wang is supported by National Nature Science Foundation of China (No. 12271357), Shanghai Science and Technology Innovation Action Plan (No. 21JC1403600) and the German Research Foundation (No. CH 955/8-1).

\end{document}